\theoremstyle{plain}
\newtheorem{theorem}{Theorem}[section]
\newtheorem*{corollary*}{Corollary}
\newtheorem{lemma}[theorem]{Lemma}
\newtheorem{proposition}[theorem]{Proposition}
\newtheorem{corollary}[theorem]{Corollary}
\theoremstyle{definition}
\newtheorem{definition}[theorem]{Definition}
\newtheorem{exampletemp}[theorem]{Example}
\newtheorem{remark}[theorem]{Remark}
\DeclareMathOperator{\Aut}{Aut}
\DeclareMathOperator{\Ad}{Ad}
\DeclareMathOperator{\id}{id}
\DeclareMathOperator{\Ext}{Ext}
\DeclareMathOperator{\conv}{conv}
\newcommand{\cstar}{\ensuremath{\mathrm{C}^{*}}}
\begin{document}

\title[Crossed Products of Nuclear C$^*$-Algebras and Their Traces]{Crossed Products of Nuclear C$^*$-Algebras by Free Groups and Their Traces}
\author[T.\ Rainone and C.\ Schafhauser]{Timothy Rainone and Christopher Schafhauser}
\date{\today}
\address{Department of Pure Mathematics, University of Waterloo, 200 University Avenue West, Waterloo, ON, Canada, N2L 3G1}
\email{trainone@uwaterloo.ca}
\email{cschafhauser@uwaterloo.ca}
\subjclass[2010]{Primary: 46L05}
\keywords{MF Algebras, Quasidiagonality, Crossed Products, K-Theory, Dynamical Systems}
\date{\today}

\begin{abstract}
  We study the matricial field (MF) property for certain reduced crossed product \cstar-algebras and their traces. Using classification techniques and induced $\mathrm{K}$-theoretic dynamics, we show that reduced crossed products of ASH-algebras of real rank zero by free groups are MF if and only if they are stably finite. We also examine traces on these crossed products and show they always admit certain finite dimensional approximation properties.  Combining these results with recent progress in Elliott's Classification Program shows that if $A$ is a separable, simple, unital, nuclear, monotracial \cstar-algebra satisfying the UCT, then $A \rtimes_{\lambda} \mathbb{F}_r$ is MF for any action of $\mathbb{F}_r$ on $A$.  By appealing to a result of Ozawa, R{\o}rdam, and Sato, we show that discrete groups of the form $G \rtimes \mathbb{F}_r$ with $G$ amenable admit MF reduced group C$^*$-algebras.
\end{abstract}

\maketitle

\section{Introduction}

Matricial field \cstar-algebras were introduced by Blackadar and Kirchberg in~\cite{BlackadarKirchberg}. These are stably finite \cstar-algebras which admit external finite dimensional approximations capturing the linear, multiplicative, and metric properties of the algebra. More precisely, a separable C$^*$-algebra $A$ is called \emph{matricial field (MF)} if there is an embedding \mbox{$A \hookrightarrow \mathcal{Q}_\omega$} where $\mathcal{Q}_\omega$ denotes the norm ultrapower of the universal UHF-algebra $\mathcal{Q}$ with respect to some free ultrafilter $\omega$ on $\mathbb{N}$. In the nuclear setting, the class of MF \cstar-algebras coincides with the class of quasidiagonal (QD) \cstar-algebras as a well known consequence of the Choi-Effros Lifting Theorem and Voiculescu's noncommutative Weyl-von Neumann Theorem.  The MF property is the \cstar-analogue of admitting tracial microstates; i.e. embeddability into the tracial ultrapower of the hyperfinite II$_1$ factor.

Of central importance in this paper is the following question asked by Blackadar and Kirchberg in \cite{BlackadarKirchberg}: is every stably finite C$^*$-algebra MF? This is the \cstar-version of Connes's Embedding Problem. Restricting to the nuclear case, they also ask if every stably finite, nuclear C$^*$-algebra is quasidiagonal.  These questions have attracted considerable attention and interest among C$^*$-algebraists.  This is especially true in the nuclear case where it is known that quasidiagonality is a necessary condition in the stably finite version of Elliott's Classification Program.

The Blackadar-Kirchberg questions have also been of interest in the theory of dynamical systems.  In particular, if $X$ is a compact metric space and $G$ is a countable, discrete group acting on $X$, when is $C(X) \rtimes_\lambda G$ QD or MF?  In the classical case where $G = \mathbb{Z}$, a complete solution was obtained by Pimsner in \cite{Pimsner}.  This was extended to free groups by Kerr and Nowak in \cite{KerrNowak} under the assumption that either $X$ is zero-dimensional or the action is minimal.  Both results have analogues for actions on AF-algebras (see \cite{BrownAFE} and \cite{Rainone}).  In this piece, we extend this characterization from AF-algebras to approximately subhomogeneous (ASH) algebras of real rank zero (Theorem 1.1 below).

In \cite{BrownQDTraces}, N.\ Brown introduced and developed tracial versions of these conditions.  In particular, a trace $\tau$ on a C$^*$-algebra $A$ is MF if there is a trace-preserving $\ast$-homomorphism $A \rightarrow \mathcal{Q}_\omega$ and is QD if, in addition, the map can be chosen to have a completely positive, contractive lifting.\footnote{The MF version is not mentioned in \cite{BrownQDTraces}, but it is an exact analogue of the QD traces and many of the basic properties still hold.} The norm and tracial versions of the QD and MF properties are closely related.  For instance, it is clear that every unital, MF (resp. QD) C$^*$-algebra admits an MF trace (resp. a QD trace) since $\mathcal{Q}_\omega$ admits a trace.  Conversely, if $A$ admits a faithful, MF (resp. QD) trace, then $A$ is MF (resp. QD).  In particular, a simple, unital C$^*$-algebra is MF (resp. QD) if and only if it admits an MF (resp. QD) trace.

A recent result of Tikuisis, White, and Winter (see \cite{TikuisisWhiteWinter}) states that if $A$ is a separable, nuclear C$^*$-algebra satisfying the Universal Coefficient Theorem (UCT), then every faithful trace on $A$ is quasidiagonal.  In particular, separable, simple, unital, nuclear, stably finite C$^*$-algebras satisfying the UCT are quasidiagonal.  Also, for every discrete, amenable group $G$, the reduced group C$^*$-algebra C$^*_\lambda(G)$ is quasidiagonal which confirms of conjecture of Rosenberg; the converse is also true by a well known result of Rosenberg (see the appendix to \cite{Rosenberg}).  An extension of this result is given in Theorem 1.4 below.

The theme throughout our work is that of K-theoretic dynamics. Given a \cstar-dynamical system, there is an induced action on the K$_0$-group of the algebra, and, in the presence of sufficiently many projections, studying this action yields meaningful information about the structure of the reduced crossed product. Both stable finiteness and the MF property have K-theoretic expressions that turn out to be equivalent (Theorem \ref{thm:K0QDQ}). By borrowing lifting and uniqueness results from the classification literature, we answer the C$^*$-Connes Embedding Problem in the affirmative for certain reduced crossed products of nuclear C$^*$-algebras by free groups. The following is our main result.

\begin{theorem}\label{thm:MainResult}
Let $A$ be an ASH-algebra of real rank zero.  Given an action $\alpha : \mathbb{F}_r \curvearrowright A$ of a free group on $A$, the following are equivalent:
\begin{enumerate}
  \item $A \rtimes_\lambda \mathbb{F}_r$ is MF;
  \item $A \rtimes_\lambda \mathbb{F}_r$ is stably finite;
  \item the subgroup of $\mathrm{K}_0(A)$ generated by $\{ x - \alpha_s(x) : x \in \mathrm{K}_0(A), s \in \mathbb{F}_r \}$ contains no non-zero positive elements of $\mathrm{K}_0(A)$.
\end{enumerate}
Moreover, every trace on $A \rtimes_\lambda \mathbb{F}_r$ is MF.
\end{theorem}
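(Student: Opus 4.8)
The plan is to run the cycle $(1)\Rightarrow(2)\Rightarrow(3)\Rightarrow(1)$ and then the trace statement, the last two sharing most of their machinery. The implication $(1)\Rightarrow(2)$ is immediate: the norm ultrapower $\mathcal{Q}_\omega$ carries a faithful limit trace and is therefore stably finite, as is any \cstar-subalgebra. For $(2)\Rightarrow(3)$ I argue by contraposition through the induced K-theoretic dynamics. If some nonzero $x\in\mathrm{K}_0(A)^+$ lay in the coboundary group $H=\langle x-\alpha_s(x)\rangle$, then, using real rank zero to write $x=[p]$ for a nonzero projection $p\in M_\infty(A)$, the relation $[\alpha_s(q)]=[q]$ inside $\mathrm{K}_0(A\rtimes_\lambda\mathbb{F}_r)$ (implemented by the canonical unitaries) would force $[p]=0$ there; a nonzero projection of vanishing class produces an infinite projection in a matrix amplification, contradicting stable finiteness. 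This is the finiteness content of the K-theoretic criterion we record as Theorem \ref{thm:K0QDQ}.

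The technical core is a free covariant realization of the canonical trace $\sigma\circ E$ attached to any $\alpha$-invariant trace $\sigma$ on $A$, where $E$ denotes the conditional expectation onto $A$. Since $A$ is separable, nuclear, and satisfies the UCT, Tikuisis--White--Winter (applied to the faithful trace induced on $A/\ker\sigma$) makes $\sigma$ quasidiagonal, and I fix a trace-preserving embedding $\rho\colon A\to\mathcal{Q}_\omega$. Invariance of $\sigma$ makes $\rho$ and each $\rho\circ\alpha_i$ agree on traces and, once their KL-data are matched, approximately unitarily equivalent in $\mathcal{Q}_\omega$ by a classification-type uniqueness theorem; this yields unitaries $W_i$ with $W_i\rho(a)W_i^*=\rho(\alpha_i(a))$. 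The remaining task is to adjust the $W_i$—using a free family of Haar unitaries supplied by the large relative commutant $\rho(A)'\cap\mathcal{Q}_\omega$ and by the Haagerup--Thorbj\o rnsen embedding $C^*_\lambda(\mathbb{F}_r)\hookrightarrow\mathcal{Q}_\omega$—into covariant unitaries $V_i$ whose mixed moments reproduce those of the reduced crossed product, that is, $\tau_\omega(\rho(a)V_w)=\delta_{w,e}\,\sigma(a)$. Then $au_w\mapsto\rho(a)V_w$ descends to a trace-preserving $*$-homomorphism of $A\rtimes_\lambda\mathbb{F}_r$, exhibiting $\sigma\circ E$ as an MF trace.

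To deduce $(3)\Rightarrow(1)$ I manufacture a faithful such $\sigma$. Condition (3)—no nonzero positive element of $\mathrm{K}_0(A)$ inside $H$—is exactly what a Goodearl--Handelman/Hahn--Banach separation on the ordered group $\mathrm{K}_0(A)$ needs: for each nonzero $x\in\mathrm{K}_0(A)^+$ there is an $\alpha$-invariant state on $(\mathrm{K}_0(A),\mathrm{K}_0(A)^+,[1_A])$ positive on $x$, and countability of $\mathrm{K}_0(A)$ lets me average these into a faithful $\alpha$-invariant state. Real rank zero promotes it to a faithful $\alpha$-invariant trace $\sigma$ on $A$ (a trace on such an algebra is determined by, and inherits faithfulness from, its values on projections). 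Then $\sigma\circ E$ is faithful, and MF by the previous paragraph; since a faithful MF trace forces the ambient algebra to be MF, $A\rtimes_\lambda\mathbb{F}_r$ is MF.

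For the \textbf{moreover} clause I treat $r=1$ and $r\ge 2$ separately. When $r=1$ the crossed product $A\rtimes_\alpha\mathbb{Z}$ is nuclear and remains in the bootstrap class, so Tikuisis--White--Winter applies to it directly and every trace there is quasidiagonal, hence MF. When $r\ge 2$ the group is ICC, and I would first prove a rigidity statement: every trace $\tau$ equals $\sigma\circ E$ with $\sigma=\tau|_A$. Indeed, for $w\neq e$ one selects conjugates $w_i=g_iwg_i^{-1}$ freely generating a free subgroup, rewrites $\tau(au_w)=\tfrac1n\sum_i\tau(\alpha_{g_i}(a)u_{w_i})$, and applies the operator-valued Haagerup inequality, $\|\sum_{i\le n}\alpha_{g_i}(a)u_{w_i}\|_{\mathrm{red}}=O(\sqrt n)$, to force $\tau(au_w)=0$; every trace is thus canonical and MF by the engine above. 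The main obstacle is precisely that engine: reconciling the approximate covariance supplied by the uniqueness theorem with exact freeness in the ultrapower—uniformly over a generating subset of $A$—so that the covariant pair $(\rho,V)$ computes the reduced norm and the canonical trace rather than their full counterparts. The free-group structure is what makes such freeness available, and verifying the mixed-moment identity is the delicate heart of the argument.
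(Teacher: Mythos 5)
Your implications $(1)\Rightarrow(2)$ and $(2)\Rightarrow(3)$ are fine and match the paper (the latter is Proposition~\ref{prop:StablyFiniteImpliesCoboundary}, not Theorem~\ref{thm:K0QDQ} as you cite). The decisive flaw is your route to $(3)\Rightarrow(1)$: you propose to manufacture a \emph{faithful} $\alpha$-invariant trace by Goodearl--Handelman separation plus countable averaging of invariant states, but no such trace need exist, even with trivial dynamics. Ordered $\mathrm{K}_0$-groups of real rank zero algebras can carry nonzero infinitesimals: take $K=\mathbb{Z}^2$ with the lexicographic order and order unit $(1,0)$, a dimension group realized by Effros--Handelman--Shen as $\mathrm{K}_0$ of a unital AF-algebra $A$. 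Since $0\leq n(0,1)\leq(1,0)$ for all $n$, \emph{every} state on $K$ kills $(0,1)$, so no state is faithful; as $(0,1)$ is the class of a nonzero projection, $A$ admits no faithful trace at all. Yet for the trivial action condition (3) holds and $A\rtimes_\lambda\mathbb{F}_r\cong A\otimes C^*_\lambda(\mathbb{F}_r)$ is MF, so your strategy cannot prove this instance. This is precisely why the paper replaces real-valued states by morphisms into the ultrapower: Theorem~\ref{thm:K0QDQ} produces a \emph{faithful} morphism $K\rightarrow\mathbb{Q}_\omega\cong\mathrm{K}_0(\mathcal{Q}_\omega)$ from a sequence of rational functionals, each strictly positive on a larger finite subset of the cone (no single faithful state is ever needed), and Theorem~\ref{thm:K0EquivariantQDQ} shows the coboundary condition yields an \emph{invariant} such morphism.

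Your ``engine'' also has two unproved steps that the paper resolves by different means. First, the phrase ``once their KL-data are matched'' hides the real work: a Tikuisis--White--Winter embedding $\rho$ has no reason to have $\alpha$-invariant K-data. Trace-preservation only gives $\iota_\omega\circ\mathrm{K}_0(\rho)\circ\mathrm{K}_0(\alpha_i)=\iota_\omega\circ\mathrm{K}_0(\rho)$, i.e.\ agreement modulo the infinitesimals of $\mathbb{Q}_\omega$, which is far from $\mathrm{K}_0(\rho)\circ\mathrm{K}_0(\alpha_i)=\mathrm{K}_0(\rho)$, the hypothesis any uniqueness theorem requires. Constructing an embedding with genuinely invariant K-data is the content of Theorems~\ref{thm:K0EquivariantQDQ} and~\ref{thm:K0QDTQ} combined with the reduction ASH $\rightarrow$ A$\mathbb{T}$ $\rightarrow$ AF (Theorem~\ref{thm:ASHimpliesRationallyAT}, Corollary~\ref{cor:K0EmeddingOfATAlgebras}) and the existence/uniqueness statement for maps from AF-algebras into $\mathcal{Q}_\omega$ (Theorem~\ref{thm:LiftingK0Approximations}); you instead apply uniqueness directly to an ASH-algebra, where no such clean tool is invoked and the invariant existence input is absent. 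Second, your plan to adjust the $W_i$ so that the mixed moments satisfy $\tau_\omega(\rho(a)V_w)=\delta_{w,e}\sigma(a)$ would not suffice even if carried out: $\tau_\omega$ is not faithful on $\mathcal{Q}_\omega$, so matching moments does not make the resulting representation of the \emph{full} crossed product factor through the \emph{reduced} one. The paper sidesteps freeness entirely: covariance alone gives an equivariant map into $\mathcal{Q}_\omega$ carrying the inner action $\mathrm{Ad}(w)$, whence an embedding $A\rtimes_\lambda\mathbb{F}_r\rightarrow\mathcal{Q}_\omega\rtimes_\lambda\mathbb{F}_r\cong\mathcal{Q}_\omega\otimes C^*_\lambda(\mathbb{F}_r)$, and one concludes (for norms and for traces) from Haagerup--Thorbj{\o}rnsen together with tensor permanence and exactness of $C^*_\lambda(\mathbb{F}_r)$ (Theorem~\ref{thm:MFCrossedProducts}); freeness of $\mathbb{F}_r$ enters only to extend finitely many unitaries to a representation. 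Finally, in your $r=1$ case Tikuisis--White--Winter cannot be applied ``directly'' since the given trace need not be faithful and $A\rtimes\mathbb{Z}$ need not admit any faithful trace; the paper first passes to $(A/J)\rtimes_{\bar\alpha}\mathbb{Z}$ for the trace-kernel ideal $J$, which does carry a faithful trace.
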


Theorem \ref{thm:MainResult} is of interest even when the acting group is $\mathbb{Z}$ (i.e.\ when $r =1$).  In this case, (1) may be replaced with ``$A \rtimes \mathbb{Z}$ is quasidiagonal'' since $A \rtimes \mathbb{Z}$ is nuclear.  In both Pimsner's and Brown's results mentioned above, a stronger version of quasidiagonality is obtained; namely, the crossed product embeds into an AF-algebra.  It would be interesting to know if the conditions above are also equivalent to the crossed product embedding into an AF-algebra when the acting group is $\mathbb{Z}$.  Our techniques break down in this setting since AF-embeddability is not known to be a local condition; however, the existence of an AF-embedding is conjectured to be equivalent to quasidiagonality and exactness.

It is not hard to see the real rank zero condition is necessary to get the K-theoretic characterizations of the MF property and of stably finiteness given above since if there are very few projections in $A$, the $\mathrm{K}_0$-group may not contain much information about the algebra or the dynamics.  For a concrete example, if $A = C(\mathbb{T})$, then $\mathrm{K}_0(A) = \mathbb{Z}$ and there is a unique action of $\mathbb{F}_r$ on $\mathrm{K}_0(A)$.

Combining Theorem 1.1 with recent breakthroughs in the classification program also gives structural information about free group actions on many simple, nuclear C$^*$-algebras.

\begin{theorem}\label{thm:ByTheMegaTheorem...}
Let $A$ be a separable, simple, unital C*-algebra which satisfies the UCT.  Assume $A \otimes \mathcal{Q}$ has finite nuclear dimension and $\mathrm{K}_0(A)$ separates traces on $A$.  Given an action $\alpha: \mathbb{F}_r \curvearrowright A$, the following are equivalent:
\begin{enumerate}
  \item $A \rtimes_{\lambda} \mathbb{F}_r$ is MF;
  \item $A \rtimes_{\lambda} \mathbb{F}_r$ is stably finite;
  \item $A$ admits an invariant trace.
\end{enumerate}
Moreover, every trace on $A \rtimes_{\lambda} \mathbb{F}_r$ is MF.
\end{theorem}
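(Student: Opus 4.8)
The plan is to deduce Theorem~\ref{thm:ByTheMegaTheorem...} from Theorem~\ref{thm:MainResult} by using the classification machinery to replace $(A,\alpha)$ with a conjugate system built on an ASH-algebra of real rank zero. The implication $(1)\Rightarrow(2)$ is immediate, since every MF algebra is stably finite. For $(2)\Rightarrow(3)$ I would note that $A$ is exact, being a \cstar-subalgebra of the nuclear algebra $A\otimes\mathcal{Q}$, and that $\mathbb{F}_r$ is exact, so $A\rtimes_\lambda\mathbb{F}_r$ is exact; a stably finite, unital, exact \cstar-algebra carries a quasitrace, which is a trace by Haagerup's theorem, and restricting it along $A\hookrightarrow A\rtimes_\lambda\mathbb{F}_r$ produces an $\alpha$-invariant trace. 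The real content is therefore the implication $(3)\Rightarrow(1)$ together with the final assertion about traces.

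For the hard direction I would first pass to $D:=A\otimes\mathcal{Q}$ equipped with the action $\alpha\otimes\mathrm{id}$. Since $\mathbb{F}_r$ acts trivially on the nuclear algebra $\mathcal{Q}$, one has $D\rtimes_\lambda\mathbb{F}_r\cong(A\rtimes_\lambda\mathbb{F}_r)\otimes\mathcal{Q}$, and I would verify that the three relevant properties are insensitive to this tensoring: the MF property passes to $A\rtimes_\lambda\mathbb{F}_r$ as a unital subalgebra and is preserved under tensoring by the nuclear MF algebra $\mathcal{Q}$; stable finiteness behaves likewise; and, as $\mathcal{Q}$ is monotracial, restriction identifies the $(\alpha\otimes\mathrm{id})$-invariant traces on $D$ with the $\alpha$-invariant traces on $A$. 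The key gain is that $D$ now has real rank zero. To see this I would invoke the characterization, valid for simple, unital, $\mathcal{Z}$-stable \cstar-algebras (and $D$ is even $\mathcal{Q}$-stable), that real rank zero is equivalent to uniform density of the image of the pairing $\rho_D\colon\mathrm{K}_0(D)\to\mathrm{Aff}(T(D))$. Since $\mathrm{K}_0(D)=\mathrm{K}_0(A)\otimes\mathbb{Q}$ by the Künneth theorem, this image is the rational span of $\rho_A(\mathrm{K}_0(A))$; its closure $V$ is a closed real subspace of $\mathrm{Aff}(T(A))$ containing the constants, and the hypothesis that $\mathrm{K}_0(A)$ separates traces says exactly that $V$ separates the points of the simplex $T(A)$. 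A Hahn--Banach argument, using the identification of the state space of the order-unit space $\mathrm{Aff}(T(A))$ with $T(A)$, then forces $V=\mathrm{Aff}(T(A))$: any bounded functional annihilating $V$ is a scalar multiple of a difference $\delta_s-\delta_t$ of point evaluations, which cannot annihilate a point-separating subspace unless $s=t$.

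Next I would feed $D$ into the classification theorem. As $A\otimes\mathcal{Q}$ has finite nuclear dimension and satisfies the UCT, and $D$ is simple, separable, unital and $\mathcal{Q}$-stable, the algebra $D$ is classified by its Elliott invariant; being of real rank zero, $\mathrm{Ell}(D)$ is realized by a simple, unital ASH-algebra $C$ of real rank zero, so that $D\cong C$. Transporting $\alpha\otimes\mathrm{id}$ through this isomorphism yields an action $\beta\colon\mathbb{F}_r\curvearrowright C$ with $C\rtimes_\lambda\mathbb{F}_r\cong D\rtimes_\lambda\mathbb{F}_r$, and now $(C,\beta)$ meets the hypotheses of Theorem~\ref{thm:MainResult}. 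Assuming $(3)$, the invariant trace on $A$ produces a $\beta$-invariant trace on the simple algebra $C$, which is automatically faithful; composing the canonical faithful conditional expectation $C\rtimes_\lambda\mathbb{F}_r\to C$ with this trace gives a faithful trace on $C\rtimes_\lambda\mathbb{F}_r$, so the crossed product is stably finite. Theorem~\ref{thm:MainResult} then upgrades stable finiteness to the MF property and shows that every trace on $C\rtimes_\lambda\mathbb{F}_r$ is MF. Undoing the reductions gives that $A\rtimes_\lambda\mathbb{F}_r$ is MF and, by restricting trace-preserving embeddings of $(A\rtimes_\lambda\mathbb{F}_r)\otimes\mathcal{Q}$ along the unital inclusion $x\mapsto x\otimes1$, that every trace on $A\rtimes_\lambda\mathbb{F}_r$ is MF.

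The main obstacle is not any single estimate but marshalling the external classification input correctly: one must be sure that finite nuclear dimension of $A\otimes\mathcal{Q}$ together with the UCT places $D$ within the scope of the Elliott classification, and that a real-rank-zero ASH model $C$ for $\mathrm{Ell}(D)$ genuinely exists, since it is only through such a model that Theorem~\ref{thm:MainResult} becomes applicable. The delicate internal point is the real rank zero reduction: it is essential that the hypothesis is separation of \emph{all} traces, not merely of the extreme ones, as it is precisely this that promotes a point-separating subspace of $\mathrm{Aff}(T(A))$ to a dense one and thereby transfers the abundance of projections needed to invoke the ASH theory.
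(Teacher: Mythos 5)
Your proof is correct and takes essentially the same route as the paper: both arguments dispose of $(1)\Rightarrow(2)$ trivially and $(2)\Rightarrow(3)$ via Haagerup's quasitrace theorem, and then reduce the remaining claims to Theorem~\ref{thm:MainResult} by showing that $A\otimes\mathcal{Q}$ is an ASH-algebra of real rank zero. The only divergence is in the packaging of the classification input: the paper cites Theorem~6.2(iii) of Tikuisis--White--Winter for the ASH structure and Blackadar--Kumjian--R{\o}rdam for real rank zero, while you reassemble the same facts from the abstract classification theorem, the existence of ASH models realizing the invariant, and R{\o}rdam's density criterion together with your Hahn--Banach argument.
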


The assumption that $A \otimes \mathcal{Q}$ has finite nuclear dimension in Theorem \ref{thm:ByTheMegaTheorem...} may hold automatically when $A$ is nuclear.  By the main result of \cite{BBSTWW}, this holds whenever the set of extreme points of $T(A)$, the simplex of tracial states on $A$, is closed in the weak* topology.  In particular, this is the case when $A$ has a unique trace as was originally shown in \cite{SatoWhiteWinter} extending the work of \cite{MatuiSato}.  Also, combining this result with an observation of N. Brown yields a large class of C$^*$-algebras for which the Brown-Douglas-Fillmore semigroup $\Ext$ is not a group.  The result below is a significant generalization of Corollary 3.14 in \cite{Rainone}.

\begin{corollary}\label{cor:Monotracial}
Let $A$ be a separable, simple, unital, nuclear, monotracial C*-algebra satisfying the UCT and let $\alpha$ be an action of $\mathbb{F}_r$ on $A$.
\begin{enumerate}
\item $A \rtimes_\lambda \mathbb{F}_r$ is MF.
\item If $r \geq 2$, then $\Ext(A \rtimes_\lambda \mathbb{F}_r)$ is not a group.
\end{enumerate}
\end{corollary}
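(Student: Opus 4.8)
The plan is to deduce both parts from Theorem \ref{thm:ByTheMegaTheorem...}, with monotraciality doing all the work of verifying its hypotheses. First I would confirm that a separable, simple, unital, nuclear, monotracial $A$ satisfying the UCT fits the standing assumptions of Theorem \ref{thm:ByTheMegaTheorem...}. Since $A$ has a unique tracial state $\tau$, the requirement that $\mathrm{K}_0(A)$ separate the traces of $A$ holds vacuously. Moreover, the extreme boundary of $T(A)$ is a single point and hence weak* closed, so the main result of \cite{BBSTWW} (as already noted in the remark following Theorem \ref{thm:ByTheMegaTheorem...}, see also \cite{SatoWhiteWinter}) guarantees that $A \otimes \mathcal{Q}$ has finite nuclear dimension. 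Thus Theorem \ref{thm:ByTheMegaTheorem...} applies to the given action $\alpha$.

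For part (1) it then suffices to verify condition (3) of Theorem \ref{thm:ByTheMegaTheorem...}, namely that $A$ admits an $\alpha$-invariant trace. Each automorphism $\alpha_s$ carries $\tau$ to a tracial state of $A$, and since $\tau$ is the only one, $\alpha_s$ must fix $\tau$; hence $\tau$ is $\alpha$-invariant. Condition (3) holds, so $A \rtimes_\lambda \mathbb{F}_r$ is MF. I would also note in passing that the ``moreover'' clause of Theorem \ref{thm:ByTheMegaTheorem...} already records that every trace on the crossed product is MF.

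For part (2), put $B = A \rtimes_\lambda \mathbb{F}_r$, which is separable and unital. The strategy is to show $B$ is MF but \emph{not} quasidiagonal and then invoke an observation of N.\ Brown (exploited in \cite{Rainone}) to the effect that a separable, unital, MF C*-algebra which fails to be quasidiagonal cannot have $\Ext$ a group. MF-ness of $B$ is exactly part (1). For the failure of quasidiagonality I would use that the canonical unitaries implementing the action generate a unital copy of $\cstar_\lambda(\mathbb{F}_r)$ inside $B$; since $r \geq 2$, the group $\mathbb{F}_r$ is non-amenable, so $\cstar_\lambda(\mathbb{F}_r)$ is not quasidiagonal by Rosenberg's theorem (\cite{Rosenberg}). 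Because quasidiagonality passes to C*-subalgebras, $B$ is not quasidiagonal either, and Brown's observation then yields that $\Ext(B)$ is not a group.

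The routine ingredients—separability and unitality of $B$, the embedding $\cstar_\lambda(\mathbb{F}_r) \hookrightarrow B$, and the hereditariness of quasidiagonality under C*-subalgebras—are all standard. The only genuinely external input beyond Theorem \ref{thm:ByTheMegaTheorem...} is Brown's $\Ext$ observation, and the main point I expect to need care with is the verification of its hypotheses: that $B$ is simultaneously MF (produced by the classification-driven Theorem \ref{thm:ByTheMegaTheorem...}) and non-quasidiagonal (forced by the non-amenable subgroup). It is precisely this gap between the norm-MF and norm-QD properties that obstructs $\Ext(B)$ from being a group.
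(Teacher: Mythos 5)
Your proof is correct, and its overall architecture coincides with the paper's: verify the hypotheses of Theorem \ref{thm:ByTheMegaTheorem...}, feed it the (automatically invariant) unique trace for part (1), and for part (2) combine MF-ness with non-quasidiagonality of the unital copy of $\mathrm{C}^*_\lambda(\mathbb{F}_r)$ inside the crossed product (via Rosenberg's theorem) together with Brown's observation that a separable, unital, MF but non-quasidiagonal C$^*$-algebra cannot have $\Ext$ a group --- the paper's precise citation for the latter is Corollary 13.5 of \cite{BrownQDPaper}. The one place you diverge is in verifying that $A \otimes \mathcal{Q}$ has finite nuclear dimension: the paper first applies the Tikuisis--White--Winter theorem \cite{TikuisisWhiteWinter} to conclude that $A$ itself is quasidiagonal (its unique trace is faithful by simplicity), and then invokes Matui--Sato \cite{MatuiSato}; you instead observe that the extreme tracial boundary is a single point, hence weak*-closed, and cite \cite{BBSTWW} (or \cite{SatoWhiteWinter} in the monotracial case), exactly as licensed by the remark following Theorem \ref{thm:ByTheMegaTheorem...}. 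Both routes are legitimate: the paper's has the side effect of exhibiting $A$ as quasidiagonal, while yours bypasses quasidiagonality of $A$ entirely and relies only on the regularity of the $\mathcal{Q}$-stabilization. In either case the hypotheses of Theorem \ref{thm:ByTheMegaTheorem...} are met, so there is no gap.
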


Finally, combining the results above with the work of Ozawa, R{\o}rdam, and Sato on amenable groups, a new class of groups with MF reduced C$^*$-algebras is obtained.  In particular, we have the following theorem.

\begin{theorem}\label{thm:AmenableByFreeGroups}
Let $G$ be an amenable group and consider an action of $\mathbb{F}_r$ on $G$.  Then $\mathrm{C}^*_\lambda(G \rtimes \mathbb{F}_r)$ is MF and the canonical trace on $\mathrm{C}^*_\lambda(G \rtimes \mathbb{F}_r)$ is MF.
\end{theorem}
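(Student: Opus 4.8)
The plan is to realize $\mathrm{C}^*_\lambda(G \rtimes \mathbb{F}_r)$ as a reduced crossed product and then feed an invariant quasidiagonal trace on the coefficient algebra into the tracial machinery developed for Theorem \ref{thm:MainResult}. First I would record the standard identification $\mathrm{C}^*_\lambda(G \rtimes \mathbb{F}_r) \cong B \rtimes_\lambda \mathbb{F}_r$, where $B := \mathrm{C}^*_\lambda(G)$ and $\mathbb{F}_r$ acts on $B$ through the automorphisms induced by its action on $G$; since $G$ is a countable amenable discrete group, $B$ is a separable, unital, nuclear \cstar-algebra. Under this identification the canonical trace $\tau$ on $\mathrm{C}^*_\lambda(G \rtimes \mathbb{F}_r)$, given by $\tau(\lambda_\gamma) = \delta_{\gamma, e}$, corresponds to the dual trace $\widetilde{\tau_0} = \tau_0 \circ E$, where $\tau_0$ is the canonical trace on $B$ and $E : B \rtimes_\lambda \mathbb{F}_r \to B$ is the canonical conditional expectation; explicitly $\widetilde{\tau_0}(b\, u_s) = \delta_{s,e}\,\tau_0(b)$ for $b \in B$ and $s \in \mathbb{F}_r$. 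The trace $\tau_0$ is faithful and, since every automorphism of $G$ fixes the identity element, it is $\mathbb{F}_r$-invariant; hence $\tau = \widetilde{\tau_0}$ is a faithful trace on $B \rtimes_\lambda \mathbb{F}_r$.

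The crucial external input is the theorem of Ozawa, R{\o}rdam, and Sato, which guarantees that the canonical trace $\tau_0$ on $B = \mathrm{C}^*_\lambda(G)$ is quasidiagonal for $G$ amenable. This is precisely the step for which we cannot invoke Theorem \ref{thm:MainResult} or Theorem \ref{thm:ByTheMegaTheorem...}, since $\mathrm{C}^*_\lambda(G)$ is in general neither ASH of real rank zero nor simple, so the classification-based results do not apply directly to it. Granting quasidiagonality of $\tau_0$, the remaining task is to transport it across the crossed product, that is, to upgrade the invariant quasidiagonal trace $\tau_0$ on $B$ to the MF property of the dual trace $\widetilde{\tau_0}$ on $B \rtimes_\lambda \mathbb{F}_r$.

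For that step I would extract from the proof of the final clause of Theorem \ref{thm:MainResult} the following general principle, which makes no use of the real rank zero hypothesis: if $B$ is separable and nuclear, $\alpha : \mathbb{F}_r \curvearrowright B$, and $\tau_0$ is an $\alpha$-invariant quasidiagonal trace on $B$, then the dual trace $\widetilde{\tau_0}$ on $B \rtimes_\lambda \mathbb{F}_r$ is MF. Applying this principle to the data above shows that the canonical trace $\tau$ on $\mathrm{C}^*_\lambda(G \rtimes \mathbb{F}_r)$ is MF, which is the second assertion of the theorem. Since $\tau$ is moreover faithful, the first assertion follows at once from the general fact, recorded in the introduction, that a unital \cstar-algebra carrying a faithful MF trace is itself MF.

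The main obstacle is exactly the principle isolated in the previous paragraph: producing matricial-field microstates for $\widetilde{\tau_0}$ out of those for $\tau_0$. Concretely, given the nearly multiplicative, nearly trace-preserving maps of $B$ into matrix algebras supplied by the quasidiagonality of $\tau_0$, one must simultaneously represent the free generators $u_1, \dots, u_r$ by unitaries that (i) approximately implement the automorphisms $\alpha_{s}$ on the microstates, (ii) respect approximate multiplicativity, and (iii) are sufficiently ``independent'' that the trace of every reduced word in $B$ and the $u_s$ is computed correctly by $\widetilde{\tau_0}$. It is here that the freeness of $\mathbb{F}_r$ is decisive, and this is the technical core already carried out in the tracial part of the proof of Theorem \ref{thm:MainResult}; the present theorem only requires observing that that argument survives the removal of the real rank zero assumption once quasidiagonality of $\tau_0$ is provided by Ozawa--R{\o}rdam--Sato.
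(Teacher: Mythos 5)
Your reduction to the dynamical system $(\mathrm{C}^*_\lambda(G), \mathbb{F}_r)$ and the observation that a faithful MF trace forces the MF property are fine, but the ``general principle'' on which everything rests --- that an invariant quasidiagonal trace $\tau_0$ on a separable nuclear $B$ always yields an MF dual trace $\tau_0 \circ \mathbb{E}$ on $B \rtimes_\lambda \mathbb{F}_r$ --- is not proved in the paper and cannot be extracted from the proof of Theorem \ref{thm:MainResult} by ``removing the real rank zero hypothesis.'' By Corollary \ref{cor:FreeGroupCrossedProducts}, the dual trace is MF if and only if $\tau_0$ is $\alpha$-MF, i.e., one needs trace-preserving microstates $\varphi_n : B \rightarrow \mathbb{M}_{k_n}$ \emph{together with} unitaries $w_s$ approximately implementing the automorphisms $\alpha_s$ on those microstates. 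Quasidiagonality of $\tau_0$ produces the $\varphi_n$ but gives no control on how $\varphi_n$ and $\varphi_n \circ \alpha_s$ are related; producing the covariance unitaries is exactly the hard part. In the proof of Theorem \ref{thm:MainResult} they come from classification: the ASH/real rank zero hypothesis is used (via Theorem \ref{thm:ASHimpliesRationallyAT}, Corollary \ref{cor:K0EmeddingOfATAlgebras}, and Theorem \ref{thm:LiftingK0Approximations}) to reduce to an AF algebra, where the \emph{uniqueness} half of Elliott's classification shows that $\psi$ and $\psi \circ \beta_s$ are unitarily equivalent in $\mathcal{Q}_\omega$ because they agree on $\mathrm{K}_0$, and freeness of $\mathbb{F}_r$ then assembles these intertwiners into a representation. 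For $B = \mathrm{C}^*_\lambda(G)$ --- which is in general neither simple nor ASH nor of real rank zero --- no such uniqueness theorem is available, so your steps (i)--(iii) constitute precisely the open difficulty rather than a corollary of the existing argument.

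The paper sidesteps this with a different device: following Ozawa--R{\o}rdam--Sato it forms the Bernoulli shift crossed product $B(G) = \left( \bigotimes_G \mathbb{M}_{2^\infty} \right) \rtimes_\lambda G$, which is separable, simple, unital, nuclear, monotracial, and satisfies the UCT, and observes that the action of $\mathbb{F}_r$ on $G$ induces an action on $B(G)$ making the trace-preserving embedding $\mathrm{C}^*_\lambda(G) \hookrightarrow B(G)$ equivariant. This gives a trace-preserving embedding $\mathrm{C}^*_\lambda(G \rtimes \mathbb{F}_r) \cong \mathrm{C}^*_\lambda(G) \rtimes_\lambda \mathbb{F}_r \hookrightarrow B(G) \rtimes_\lambda \mathbb{F}_r$, and the larger algebra falls under Corollary \ref{cor:Monotracial} (via Theorem \ref{thm:ByTheMegaTheorem...}), whose hypotheses $B(G)$ does satisfy; since the MF property and MF traces pass to unital subalgebras, both assertions follow. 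Note also that your attribution is off: Ozawa--R{\o}rdam--Sato prove quasidiagonality for \emph{elementary} amenable groups, and for arbitrary amenable $G$ the quasidiagonality of $\operatorname{tr}_G$ is due to Tikuisis--White--Winter; but what the paper actually takes from Ozawa--R{\o}rdam--Sato is the algebra $B(G)$ and its properties, not any quasidiagonality statement about $\mathrm{C}^*_\lambda(G)$ itself.
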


\begin{remark}
Using that the canonical trace on  $\mathrm{C}^*_\lambda(G \rtimes \mathbb{F}_r)$ is MF, one can easily show the group von Neumann algebra $L(G \rtimes \mathbb{F}_r)$ embeds into $\mathcal{R}^\omega$, the tracial ultrapower of the hyperfinite $\mathrm{II}_1$ factor $\mathcal{R}$, and hence $G \rtimes \mathbb{F}_r$ is hyperlinear (see Proposition \ref{prop:ConnesEmbedding} below).  After a preliminary version of this paper was announced, we were informed by Ben Hayes that the hyperlinearity of $G \rtimes \mathbb{F}_r$ follows from known results obtained in~\cite{BrownDykemaJung}. Also, the class of groups $G$ for which the canonical trace on C$^*_\lambda(G)$ is MF has also been considered in \cite{Hayes}. In particular, see Proposition 3.11 in \cite{Hayes} where a list of permanence properties are given for this class of groups.
\end{remark}

The paper is organized as follows.  Section \ref{sec:Preliminaries} establishes some notation and basic facts about ultrapowers and approximate morphisms which will be used throughout the paper.  Section \ref{sec:MFAlgebras} contains the definition of MF algebras and traces along with dynamical analogues of these notions.  Also, some new permanence properties of these algebras are obtained.  Section \ref{sec:KTheoreticDynamics} develops a systematic treatment of approximation properties for K-theoretic dynamics.  In some sense, we show there is no K-theoretic obstruction to a stably finite algebra being MF and identify the only K-theoretic obstruction to having an MF crossed product.  Section \ref{sec:ClassificationStuff} collects the consequences of the classification program needed to lift the K-theoretic approximations obtained in Section \ref{sec:KTheoreticDynamics} to norm approximations.  In Section \ref{sec:Proofs} the results stated in this introduction are proved.  The final section gives a version of Theorem \ref{thm:MainResult} for a certain class of Cuntz-Pimsner algebras.

\section{Notation and Preliminaries}
\label{sec:Preliminaries}

Throughout this piece, $G$ is a discrete group with neutral element $e$.  Also, we will restrict to countable groups and separable, unital C$^*$-algebras whenever it is convenient.  The set of projections in a \cstar-algebra $A$ is denoted by $\mathcal{P}(A)$, whereas $\mathcal{U}(A)$ refers to the group of a unitaries in $A$.  More generally, for $n \geq 1$, let $\mathcal{P}_n(A)$ and $\mathcal{U}_n(A)$ denote the collection of projections and unitaries in $\mathbb{M}_n(A)$, respectively, and define $\mathcal{P}_\infty(A) := \amalg_{n \geq 1} \, \mathcal{P}_n(A)$ and $\mathcal{U}_\infty(A) := \amalg_{n \geq 1} \, \mathcal{U}_n(A)$.  The free group on $r$ generators is written as $\mathbb{F}_r$, where $r\in\{1,2,\dots,\infty\}$, and the universal UHF-algebra is denoted by $\mathcal{Q}$.

The minimal tensor product of C$^*$-algebras $A$ and $B$ is denoted by $A \otimes B$.  By a \cstar-dynamical system (or an action $\alpha:G\curvearrowright A$), we mean a triple $(A, G, \alpha)$ where $G$ is a group, $A$ is a \cstar-algebra, and $\alpha:G \rightarrow\Aut(A)$ is a homomorphism into the group of automorphisms of $A$.  Given such an action, we write $A\rtimes_{\alpha}G$ for the full crossed product and $A \rtimes_{\alpha, \lambda} G$ for the reduced crossed product.  When the action $\alpha$ is understood, we may simply write $A \rtimes G$ and $A \rtimes_\lambda G$ for the full and reduced crossed products, respectively.  When $A$ is unital, the group $G$ is identified with its image in the crossed product via the unitary representation $G \hookrightarrow \mathcal{U}(A \rtimes_\lambda G)\ (s \mapsto 1_Au_s,\  s\in G)$. The algebra $A$ also embeds into the crossed product via the map $A \hookrightarrow A \rtimes_\lambda G\ (a\mapsto au_e)$.  Let $\mathbb{E} : A \rtimes_\lambda G \rightarrow A$ denote the canonical conditional expectation determined by $\mathbb{E}(u_s) = 0$ for $s \in G$ with $s \neq e$.

By a trace on a C$^*$-algebra, we mean a tracial state; i.e. a trace $\tau$ on $A$ is a state such that $\tau(ab) = \tau(ba)$ for every $a, b \in A$. The unique trace on the $\mathbb{M}_n$ is denoted by $\operatorname{tr}_n$, and $\operatorname{tr}_G$ refers to the canonical trace on C$_{\lambda}^{*}(G)$. Given a \cstar-dynamical system $(A,G,\alpha)$, a trace $\tau$ on $A$ is \emph{invariant} if $\tau \circ \alpha_s = \tau$ for all $s \in G$.  If $\tau$ is an invariant trace on $A$, then the state $\tau\circ\mathbb{E}:A\rtimes_{\lambda}G\rightarrow\mathbb{C}$ is also a trace.

Ultrapowers of \cstar-algebras and ordered abelian groups will surface regularly throughout this article. We outline their construction and prove some useful properties. To this end, let $\omega$ be a free ultrafilter on $\mathbb{N}$ which is fixed throughout the paper.

Given a C$^*$-algebra $B$, the C$^*$-algebra $\ell^\infty(B)$ consisting of bounded sequences in $B$ contains the subalgebra
\[c_\omega(B) := \left\{ (b_n)_n \in \ell^\infty(B) : \lim_{n \rightarrow \omega} \|b_n\| = 0 \right\}\]
as a closed, two-sided ideal. The \emph{norm ultrapower} of $B$ is the quotient \cstar-algebra
\[ B_\omega := \ell^\infty(B) / c_\omega(B). \]
We write $\pi_{\omega}: \ell^\infty(B) \rightarrow B_\omega$ for the quotient map.  If $B$ is equipped with a distinguished trace $\tau$, the norm ultrapower admits the trace $\tau_\omega$ defined by
\[\tau_{\omega}(\pi_{\omega}((b_n)_n))=\lim_{n\rightarrow\omega}\tau_{n}(b_n).\]

\begin{definition} Let $A$ and $B_n$ be separable, unital \cstar-algebras for $n \in \mathbb{N}$.  An \emph{approximate morphism} from $A$ to $(B_n)_{n\geq1}$ is a sequence of unital $\ast$-linear maps $(\varphi_n:A\rightarrow B_n)_{n\geq1}$ satisfying
\[\lim_{n\rightarrow\infty}\|\varphi_n(aa')-\varphi_n(a)\varphi_n(a')\|=0\qquad \text{for all $a, a' \in A$}. \]

An approximate morphism $(\varphi_n)_{n \geq 1}$ is \emph{faithful} if the maps further satisfy
\[\lim_{n\rightarrow\infty}\|\varphi_n(a)\|=\|a\|\qquad \text{for all $a\in A$}.\]

If $A$ and $B$ are equipped with distinguished traces $\tau$ and $\tau_n$, we call an approximate morphism $(\varphi_n)_{n\geq 1}$ \emph{trace-preserving} if
\[\lim_{n\rightarrow\infty}\tau_n(\varphi_n(a))=\tau(a)\qquad \text{for all $a\in A$}.\]
\end{definition}

The following standard fact will be used throughout the paper and will allow us to freely move between approximate morphisms and (exact) morphisms into ultrapowers.

\begin{proposition}\label{prop:approxmorphism}
Let $A$ and $B$ be separable, unital C*-algebras.
\begin{enumerate}
  \item An approximate morphism $(\varphi_n : A \rightarrow B)_{n \geq 1}$ induces a morphism $\varphi : A \rightarrow B_\omega$ such that $\varphi(a) = \pi_\omega((\varphi_n(a))_n)$ for all $a \in A$.  If $(\varphi_n)_{n\geq1}$ is faithful or trace-preserving, then $\varphi$ has the same property.
  \item Every morphism $\varphi : A \rightarrow B_\omega$ is induced by an approximate morphism $(\varphi_n)_{n \geq 1}$ as in part (1).  If $\varphi$ is faithful or trace-preserving, we may choose $(\varphi_n)_{n \geq 1}$ with the same property.
\end{enumerate}
\end{proposition}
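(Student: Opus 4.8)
For part (1) the plan is to verify directly that the formula $\varphi(a) = \pi_\omega((\varphi_n(a))_n)$ defines a unital $\ast$-homomorphism, starting with well-definedness, i.e.\ that $(\varphi_n(a))_n$ is a bounded sequence. The key observation is that asymptotic multiplicativity already forces asymptotic contractivity on unitaries: if $u \in A$ is unitary, then since each $\varphi_n$ is unital and $\ast$-linear one has $\|\varphi_n(u)\varphi_n(u)^* - 1_B\| = \|\varphi_n(u)\varphi_n(u^*) - \varphi_n(uu^*)\| \to 0$, so $\|\varphi_n(u)\|^2 = \|\varphi_n(u)\varphi_n(u)^*\| \to 1$ and in particular $\sup_n \|\varphi_n(u)\| < \infty$. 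Writing an arbitrary $a$ as a linear combination of four unitaries then bounds $\sup_n \|\varphi_n(a)\|$, so $(\varphi_n(a))_n \in \ell^\infty(B)$ and $\varphi$ makes sense. That $\varphi$ is unital and $\ast$-linear is immediate, and multiplicativity follows because $(\varphi_n(aa') - \varphi_n(a)\varphi_n(a'))_n$ lies in $c_\omega(B)$, its norms tending to $0$ even in the ordinary limit. Finally, since an ordinary limit is also an $\omega$-limit, the identities $\|\varphi(a)\| = \lim_{n\to\omega}\|\varphi_n(a)\| = \|a\|$ and $\tau_\omega(\varphi(a)) = \lim_{n\to\omega}\tau(\varphi_n(a)) = \tau(a)$ transfer faithfulness and trace-preservation with no further work.

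For part (2) I would use separability to reduce the lifting to finitely many coordinates at a time. First fix an increasing sequence of finite-dimensional, unital, self-adjoint subspaces $E_1 \subseteq E_2 \subseteq \cdots \subseteq A$ whose union is dense. Since each $E_m$ is finite-dimensional, the restriction $\varphi|_{E_m} : E_m \to B_\omega$ lifts through the quotient map $\pi_\omega$ to a unital $\ast$-linear map $\Psi^{(m)} = (\Psi^{(m)}_n)_n : E_m \to \ell^\infty(B)$, obtained by lifting a self-adjoint basis and adjusting to be unital, which I then extend $\ast$-linearly to all of $A$. Because $\varphi$ is a genuine $\ast$-homomorphism, for fixed $a, a' \in E_m$ the defect $\|\Psi^{(m)}_n(aa') - \Psi^{(m)}_n(a)\Psi^{(m)}_n(a')\|$ tends to $0$ \emph{along $\omega$}, and likewise $\lim_{n\to\omega}\|\Psi^{(m)}_n(a)\| = \|a\|$ in the faithful case and $\lim_{n\to\omega}\tau(\Psi^{(m)}_n(a)) = \tau(a)$ in the trace-preserving case.

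The remaining task is to assemble the family $(\Psi^{(m)})_m$ into a single approximate morphism $(\varphi_n)_n$ for which the defining conditions hold in the \emph{ordinary} limit while still $\pi_\omega((\varphi_n(a))_n) = \varphi(a)$. This is the one genuinely technical step, and I expect the interface between the ordinary limit demanded by the definition of an approximate morphism and the $\omega$-limit controlling $\pi_\omega$ to be the main obstacle. The plan is a diagonal reindexing argument exploiting that, by separability, only countably many conditions over a dense sequence of test elements must be arranged at once. Concretely, for each $n$ one records the largest ``scale'' $p(n)$ up to which the relevant coordinate is genuinely $1/p(n)$-multiplicative (and, where applicable, $1/p(n)$-isometric or $1/p(n)$-trace-preserving) on the first $p(n)$ test elements; the sets $\{ n : p(n) \geq k \}$ lie in $\omega$, so $p(n) \to \infty$ along $\omega$. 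One keeps the good coordinate where the scale is large, and on the $\omega$-negligible set where it is small one replaces $\varphi_n$ by a coordinate that is good to scale at least $n$, which exists at every scale. The replacement set is not in $\omega$, so the ultrapower element $\varphi(a)$ is preserved, while the capping drives the defects to $0$ in the ordinary limit. I would carry this out via the standard reindexation lemma for sequence algebras, whose underlying idea is the elementary single-sequence observation that any sequence tending to $0$ along $\omega$ may be capped below $1/n$ without disturbing its $\omega$-limit; the bookkeeping that makes this work simultaneously for all test elements and for the norm and trace conditions is where the care is needed.
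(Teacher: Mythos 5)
Your part~(1) is correct. For the one nontrivial point, that $(\varphi_n(a))_n$ is bounded, you argue via unitaries: approximate multiplicativity plus unitality and $\ast$-linearity force $\|\varphi_n(u)\| \to 1$ for every unitary $u$, and every element of the unital algebra $A$ is a linear combination of four unitaries. The paper instead writes $a^*a + x^*x = \|a\|^2 1_A$ and uses positivity of $\varphi_n(a)^*\varphi_n(a) + \varphi_n(x)^*\varphi_n(x)$ (the Connes--Higson argument). Both are valid, and the remaining verifications coincide with the paper's.

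Part~(2) has a genuine gap, and it sits exactly where you predicted: the claim that ``the replacement set is not in $\omega$.'' Set $T_k = \{n : p(n) \geq k\}$, so $T_k \in \omega$ for every \emph{fixed} $k$. If you keep $\psi_n$ only on $\{n : p(n) \geq k\}$ for a fixed cutoff $k$, the replacement set is indeed $\omega$-negligible, but then the kept coordinates are only $1/k$-good, so the defects do not tend to $0$ in the ordinary limit and you have not produced an approximate morphism. To get ordinary-limit decay you must keep $\psi_n$ only on $\{n : p(n) \geq h(n)\}$ for some $h(n) \to \infty$, and nothing guarantees that this set lies in $\omega$: knowing $T_k \in \omega$ for every fixed $k$ yields such an $h$ exactly when the decreasing family $(T_k)_k$ has a pseudo-intersection in $\omega$ (a set $S \in \omega$ with $S \setminus T_k$ finite for all $k$), and this is a P-point-type property that an arbitrary free ultrafilter on $\mathbb{N}$ does not have. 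So the step is not merely unjustified; it cannot be justified as a general method. Your scalar ``capping'' analogy is what hides this: replacing $x_n$ by $\min(x_n, 1/n)$ moves each term by at most $x_n$, which is $\omega$-small by hypothesis, whereas ``capping the defect of $\psi_n$'' means substituting an entirely different map, whose values at the test elements may be far in norm from $\psi_n(a_j)$; if the substitutions occur on a set in $\omega$, the induced morphism changes. (A smaller issue: you carry a separate lift $\Psi^{(m)}$ for each $E_m$, so your scale function $p(n)$ refers to ``the relevant coordinate'' of infinitely many different sequences; you should fix one global unital $\ast$-linear lift $\psi : A \to \ell^\infty(B)$, as the paper does, before $p(n)$ is even well defined.)

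The paper's construction never needs an $\omega$-large set of kept coordinates. It takes one global lift $\psi$, observes that for each $k$ the set of $n \geq m$ on which $\psi_n$ is $\varepsilon_k$-multiplicative (and, in the faithful or tracial cases, $\varepsilon_k$-isometric or $\varepsilon_k$-trace-preserving) on $\Omega_k$ lies in $\omega$ and hence is \emph{nonempty}, selects a single index $n_k$ from it, and defines $\varphi_n := \psi_{n_k}$ on the block $n_k \leq n < n_{k+1}$. This yields a faithful (resp.\ trace-preserving) approximate morphism outright, and that existence statement is what every later application in the paper actually invokes; restructuring your assembly along these lines repairs your argument. Note that the finer assertion that the constructed sequence induces the \emph{given} $\varphi$, rather than some morphism with the same faithfulness or trace behaviour, runs into the same ultrafilter combinatorics in either construction --- the paper declares that verification routine, but its downstream uses require only existence.
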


\begin{proof}
In passing from the approximate morphism $(\varphi_n)_{n \geq 1}$ to the morphism $\varphi$, the only non-trivial claim is that the sequence $(\varphi_n(a))_n$ is an element of $\ell^\infty(B)$ for all $a \in A$.  This was shown by Connes and Higson in \cite{ConnesHigson:EThoery} in a slightly different context; we reproduce their argument here.  To this end, fix $a \in A$ and write $a^*a+x^*x=\|a\|^21_A$ for some $x\in A$. Then $\|a\|^21_B=\varphi_n(a^*a)+\varphi_n(x^*x)$, so
\begin{align*}
\|\varphi_n(a)^*\varphi_n(a)&+\varphi_n(x)^*\varphi_n(x)-\|a\|^21_B\|\\
&\leq\|\varphi_n(a)^*\varphi_n(a)-\varphi_n(a^*a)\|+\|\varphi_n(x)^*\varphi_n(x)-\varphi_n(x^*x)\|\rightarrow 0
\end{align*}
as $n \rightarrow \infty$.  For a tolerance $\varepsilon>0$ and sufficiently large $n$,
\[0\leq\varphi_n(a)^*\varphi_n(a)\leq \varphi_n(a)^*\varphi_n(a)+\varphi_n(x)^*\varphi_n(x)\leq (\|a\|^2+\varepsilon)1_B, \]
and hence $\|\varphi_n(a)\|^2\leq\|a\|^2+\varepsilon$ for all such $n$.  Therefore, $(\varphi_n(a))_{n \geq 1}$ is bounded.

To prove the second part of the proposition, fix a morphism $\varphi : A \rightarrow B_\omega$.  Let $(\Omega_k)_{k \geq 1}$ denote an increasing sequence of finite subsets of $A$ whose union is dense in $A$ and let $(\varepsilon_k)_{k \geq 1}$ be a decreasing sequence of positive numbers converging to 0.  Let $\psi : A \rightarrow \ell^\infty(B)$ be a unital, linear map with $\pi_\omega \circ \psi = \varphi$ and let $\psi_n : A \rightarrow B$ denote the components of $\psi$ so that $\psi(a) = (\psi_n(a))_n$ for all $a \in A$.  By replacing $\psi$ with the map
\[ A \rightarrow \ell^\infty(B) : a \mapsto \frac12 (\psi(a) + \psi(a^*)^*), \]
we may assume $\psi$ is $*$-preserving and hence each $\psi_n$ is $*$-preserving.

As $\varphi$ is a morphism, for any $k, m \in \mathbb{N}$, the set
\[ \{ n \in \mathbb{N} : n \geq m \text{ and } \| \psi_n(a a') - \psi_n(a) \psi_n(a') \| < \varepsilon_k \text{ for all $a, a' \in \Omega_k$} \} \]
is an element of $\omega$ and, in particular, is non-empty.  Hence one may inductively construct an increasing sequence $(n_k)_{k \geq 1}$ of natural numbers converging to $\infty$ such that
\[ \| \psi_{n_k}(aa') - \psi_{n_k}(a) \psi_{n_k}(a') \| < \varepsilon_k \qquad \text{ for all $a, a' \in \Omega_k$}. \]
Define $\varphi_n = \psi_{n_k}$ for each $n_k \leq n < n_{k+1}$ and $k \in \mathbb{N}$, and for $1 \leq n < n_1$, choose a unital, $*$-linear map $\varphi_n : A \rightarrow B$ arbitrarily.

Verifying that $(\varphi_n)_{n \geq 1}$ is an approximate morphism and induces the given morphism $\varphi : A \rightarrow B_\omega$ is routine.  In the case where $\varphi$ is faithful (trace-preserving), one can choose the $n_k \in \mathbb{N}$ as above such that $\psi_{n_k}$ are approximately isometric (trace-preserving) in addition to being approximately multiplicative.  Then the approximate morphism $(\varphi_n)_{n \geq 1}$ is faithful (trace-preserving).
\end{proof}

\begin{remark}
In the proposition above, the ultrapower $B_\omega$ can be replaced with the corona algebra $B_\infty := \ell^\infty(B) / c_0(B)$ with a similar proof. If one is only interested in faithful approximate morphisms, there is very little to be gained from considering the ultrapower $B_\omega$ in place of the corona algebra $B_\infty$.  There is, however, a technical advantage in using $B_\omega$ to encode trace-preserving approximate morphisms.  There is no canonical way to obtain a trace on $B_\infty$ from a trace $\tau$ on $B$.  On the other hand, a trace $\tau$ on $B$ induces a trace $\tau_\omega$ on $B_\omega$ by taking limits along $\omega$.
\end{remark}

Recall, an \emph{ordered abelian group} $(K, K^+)$ consists of a abelian group $K$ together with a semigroup $K^+ \subseteq K$ generating $K$ such that $K^+ \cap (-K^+) = \{0\}$.  As usual, for $x, y \in K$ we write $x \leq y$ to indicate $y - x \in K^+$.  An \emph{order unit} $u \in K^+$ is an element such that for all $x \in K$, there is an $n \in \mathbb{N}$ with $-n u \leq x \leq n u$.  We always assume an ordered abelian group to be equipped with a distinguished order unit $u$, and we often write $K$ in place of the triple $(K, K^+, u)$.  A \emph{morphism} $\sigma : K \rightarrow L$ of ordered abelian groups is an group morphism $\sigma : K \rightarrow L$ such that $\sigma(K^+) \subseteq L^+$ and $\sigma(u_K) = u_L$.  The morphism $\sigma$ is \emph{faithful} if $\ker(\sigma) \cap K^+ = \{0\}$.  A \emph{state} on $K$ is a morphism $\mu : K \rightarrow (\mathbb{R}, \mathbb{R}^+, 1)$.

If $A$ is a stably finite, unital \cstar-algebra, then K$_0(A)$ is an ordered abelian group with the positive cone determined by the projections in $\mathcal{P}_\infty(A)$ and the order unit determined by the unit $1_A$.  Note that a morphism $\varphi : A \rightarrow B$ between stably finite, unital C*-algebras induces a morphism $\mathrm{K}_0(\varphi) : \mathrm{K}_0(A) \rightarrow \mathrm{K}_0(B)$ of ordered abelian groups and $\mathrm{K}_0(\varphi)$ is faithful whenever $\varphi$ is faithful.  Similarly, a trace $\tau$ on $A$ induces a state $\mathrm{K}_0(\tau)$ on $\mathrm{K}_0(A)$ which is faithful if $\tau$ is faithful.  Moreover, is $A$ and $B$ are equipped with distinguished traces and $\varphi : A \rightarrow B$ is a trace-preserving morphism, then $\mathrm{K}_0(\varphi) : \mathrm{K}_0(A) \rightarrow \mathrm{K}_0(B)$ is a state-preserving morphism.

As with C$^*$-algebras, we will need the notion of ultrapowers and approximate morphisms of ordered abelian groups.  Given an ordered abelian group $L$, let $\ell^\infty(L)$ denote the collection of sequences $(x_n)_n \subseteq L$ which are bounded in the sense that there is a $d \in \mathbb{N}$ such that $- du \leq x_n \leq du$ for all $n \in \mathbb{N}$.  Then $\ell^\infty(L)$ is an ordered group when equipped with the component-wise ordering and the order unit given by the constant sequence $u$.  Define the subgroup $c_\omega(L) \subseteq \ell^\infty(K)$ by
\[ c_\omega(L) = \{ (x_n)_n \in \ell^\infty(L) : \text{ the set } \{n \in \mathbb{N} : x_n = 0 \} \text{ is in } \omega \} \]
and let $L_\omega$ denote the quotient $\ell^\infty(L) / c_\omega(L)$ and let $\kappa_\omega : \ell^\infty(L) \rightarrow L_\omega$ denote the quotient map. Then $L_\omega$ is an ordered abelian group with $L_\omega^+ = \kappa_\omega(\ell^\infty(L)^+)$ and order unit $u_{L_\omega} = \kappa_\omega(u_{\ell^\infty(L)})$.  Given a state $\mu$ on $L$, there is an induced state $\mu_\omega$ on $L_\omega$ given by
\[ \mu_\omega(\kappa_\omega((x_n)_n)) = \lim_{n \rightarrow \omega} \mu(x_n). \]

\begin{definition}
Let $K$ and $L_n$ be countable ordered abelian groups for $n \in \mathbb{N}$.  An \emph{approximate morphism} from $K$ to $(L_n)_{n\geq1}$ is a sequence $(\lambda_n: K \rightarrow L_n)_{n\geq1}$ of unit-preserving maps such that
\begin{enumerate}
  \item if $x, y \in K$, then $\lambda_n(x + y) = \lambda_n(x) + \lambda_n(y)$ for all sufficiently large $n$, and
  \item if $x \in K^+$, then $\lambda_n(x) \geq 0$ for all sufficiently large $n$.
\end{enumerate}

The approximate morphism $(\lambda_n)_{n\geq1}$ is called \emph{faithful} if the following holds: if $x \in K^+ \setminus \{0\}$, then $\lambda_n(x) \neq 0$ for all sufficiently large $n$.

When $K$ and $L_n$ are equipped with distinguished states $\mu$ and $\mu_n$, the approximate morphism $(\lambda_n)_{n\geq1}$ is called \emph{state-preserving} if, for all $x \in K$, $\displaystyle \lim_{n \rightarrow \infty} \mu_n(\lambda_n(x)) = \mu(x)$.
\end{definition}

The following is an exact analogue of Proposition \ref{prop:approxmorphism}.

\begin{proposition}\label{prop:groupapproxmorphism}
Let $K$ and $L$ be countable ordered abelian groups.
\begin{enumerate}
  \item An approximate morphism $(\lambda_n : K \rightarrow L)_{n \geq 1}$ induces a morphism $\lambda : K \rightarrow L_\omega$ such that $\lambda(x) = \kappa_\omega((\lambda_n(x))_n)$ for all $x \in K$.  If $(\lambda_n)$ is faithful or trace-preserving, then $\lambda$ has the same property.
  \item Every morphism $\lambda : K \rightarrow L_\omega$ is induced by an approximate morphism $(\lambda_n)_{n \geq 1}$ as in part (1).  If $\lambda$ is faithful or trace-preserving, we may choose $(\lambda_n)_{n \geq 1}$ with the same property.
\end{enumerate}
\end{proposition}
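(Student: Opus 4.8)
The plan is to mirror the proof of Proposition \ref{prop:approxmorphism} step for step, replacing the Connes--Higson boundedness estimate and the approximate-multiplicativity bookkeeping by their order-theoretic analogues. For part (1), the first and main task is to check that $(\lambda_n(x))_n$ lies in $\ell^\infty(L)$ for every $x \in K$, so that $\lambda(x) = \kappa_\omega((\lambda_n(x))_n)$ is even defined. Fix $x$ and, using that $u_K$ is an order unit, choose $m \in \mathbb{N}$ with $m u_K - x \in K^+$ and $m u_K + x \in K^+$. For all sufficiently large $n$, the two conditions in the definition of an approximate morphism give $\lambda_n(m u_K) = m u_L$ (by iterated additivity together with $\lambda_n(u_K) = u_L$), as well as $\lambda_n(m u_K - x) \geq 0$, $\lambda_n(m u_K + x) \geq 0$, and the relations $\lambda_n(m u_K) = \lambda_n(m u_K - x) + \lambda_n(x)$ and $\lambda_n(m u_K + x) = m u_L + \lambda_n(x)$. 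These force $-m u_L \leq \lambda_n(x) \leq m u_L$ for all large $n$, and since the finitely many remaining terms are each dominated by a multiple of $u_L$, the sequence $(\lambda_n(x))_n$ is bounded. This is the exact analogue of the boundedness step flagged in Proposition \ref{prop:approxmorphism}.

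The rest of part (1) is then formal. Additivity and positivity of $\lambda$ hold because the defining relations are valid for all sufficiently large $n$, hence on cofinite and therefore $\omega$-large sets, so they descend to $L_\omega$; unit-preservation is immediate from $\lambda_n(u_K) = u_L$. If $(\lambda_n)$ is faithful and $x \in K^+ \setminus \{0\}$, then $\lambda_n(x) \neq 0$ for all large $n$, so $\{ n : \lambda_n(x) = 0 \}$ is finite and hence not in $\omega$, giving $\lambda(x) \neq 0$. If $(\lambda_n)$ is state-preserving, then the ordinary limit $\lim_n \mu(\lambda_n(x)) = \mu(x)$ exists, so it agrees with the ultralimit defining $\mu_\omega(\lambda(x))$, whence $\mu_\omega(\lambda(x)) = \mu(x)$.

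For part (2), I would lift $\lambda$ to a sequence of unit-preserving maps $\psi_n : K \to L$ with $\kappa_\omega((\psi_n(x))_n) = \lambda(x)$ for all $x$, arranging $\psi_n(u_K) = u_L$ and, for each $x \in K^+$, choosing a representative of $\lambda(x)$ in $\ell^\infty(L)^+$ so that $\psi_n(x) \geq 0$ for all $n$ (this handles positivity outright and is stable under any reindexing). Fixing an increasing exhaustion $\Omega_1 \subseteq \Omega_2 \subseteq \cdots$ of $K$ by finite sets, the fact that $\lambda$ is a morphism makes each additivity set $\{ n : \psi_n(x+y) = \psi_n(x) + \psi_n(y) \}$ an element of $\omega$, and intersecting the finitely many such sets arising from $\Omega_k$ (together with the faithfulness sets $\{n : \psi_n(x) \neq 0\}$ or state-approximation sets, as needed) yields $W_k \in \omega$ with $W_1 \supseteq W_2 \supseteq \cdots$. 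Choosing $n_1 < n_2 < \cdots$ with $n_k \in W_k$ and setting $\lambda_n = \psi_{n_k}$ for $n_k \leq n < n_{k+1}$ produces a sequence for which additivity on any fixed pair, positivity on any fixed element of $K^+$, and (where relevant) non-vanishing or state-approximation hold for all sufficiently large $n$; thus $(\lambda_n)$ is an approximate morphism that is faithful or state-preserving whenever $\lambda$ is.

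The step I expect to be most delicate is verifying that this reindexed sequence induces the \emph{given} morphism $\lambda$, rather than merely some morphism with the same faithfulness and state data: the block reindexing $n \mapsto n_k$ a priori alters the ultralimit computing $\kappa_\omega$, since a subsequence of a bounded sequence need not have the same $\omega$-limit. The conceptually cleanest way to guarantee the correct morphism is to reformulate through the corona group $L_\infty := \ell^\infty(L)/c_0(L)$: one has $c_0(L) \subseteq c_\omega(L)$, and a morphism of ordered groups into $L_\infty$ is, essentially by definition, induced by an honest approximate morphism, because there the defining relations for a morphism are exactly the cofinite (``all sufficiently large $n$'') conditions in the definition of an approximate morphism. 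I would therefore carry out the construction on whichever of these two models --- the block reindexing or the corona lift --- makes the identification with the original $\lambda$, and the transfer of faithfulness and state-preservation, most transparent, with the corona formulation serving as the check that no $\omega$-limit is disturbed.
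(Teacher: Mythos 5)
Your part (1) is correct and is exactly the paper's argument: the paper also treats boundedness of $(\lambda_n(x))_n$ as the only non-trivial point and proves it precisely as you do, taking $d$ with $du_K \pm x \in K^+$, using eventual additivity and positivity to get $-du_L \leq \lambda_n(x) \leq du_L$ for all large $n$, and enlarging $d$ to absorb the finitely many remaining terms; the descent of additivity, positivity, faithfulness and state-preservation to $L_\omega$ is formal, as you say. Your part (2) likewise follows the paper's template, since the paper's proof is ``nearly identical to Proposition 2.2'' and Proposition 2.2 is proved by exactly your block reindexing $\lambda_n = \psi_{n_k}$ on $[n_k, n_{k+1})$.

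However, the step you flag at the end --- that the reindexed sequence must induce the \emph{given} $\lambda$ --- is a genuine gap, and your proposed repair does not close it. Your corona observation is true: morphisms $K \to L_\infty = \ell^\infty(L)/c_0(L)$ correspond to approximate morphisms, because modulo $c_0(L)$ the defining relations are exactly the cofinite ones. But the morphism you are handed lands in $L_\omega$, not $L_\infty$, and ``carrying out the construction on the corona model'' presupposes a lift of $\lambda$ along the canonical surjection $L_\infty \twoheadrightarrow L_\omega$; producing such a lift is \emph{equivalent} to the assertion being proved, since $\lambda$ is induced by an approximate morphism if and only if it factors through $L_\infty$. So the corona reformulation restates the problem rather than solving it, and the key identification remains unproved in your writeup (as it does in the paper, where it is called routine). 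That the issue is real is shown by a trivial example: let $K = \mathbb{Z}^2$ with the coordinatewise order and unit $(1,1)$, let $L = \mathbb{Z}$, and let $\psi_n$ be the first coordinate projection for $n$ even and the second for $n$ odd. Every $\psi_n$ is an exact morphism, so every choice of $(n_k)$ is admissible in the construction; yet if the even numbers lie in $\omega$ and one takes all $n_k$ odd, the resulting approximate morphism induces the second projection instead of the given $\lambda$, which is the first.

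In fact no choice of $(n_k)$ can repair this for an arbitrary free ultrafilter: inducing the given $\lambda$ forces the agreement sets $\{ n : \lambda_n = \psi_n \text{ on } \Omega_k \}$ to lie in $\omega$, while the approximate-morphism property forces them to be almost contained in the goodness sets $G_k$, so one needs a pseudo-intersection of the $G_k$ inside $\omega$; this is a P-point property of $\omega$, and non-P-points exist in ZFC. Concretely, take $K = \mathbb{Z}^2$ with $K^+ = \{(a,b) : a + b\sqrt{2} > 0\} \cup \{(0,0)\}$, unit $(1,0)$, and $L = (\mathbb{Q}, \mathbb{Q}^+, 1)$: eventual additivity, positivity and unit-preservation force $\lambda_n(0,1) \to \sqrt{2}$ along the \emph{full} sequence, so a morphism $K \to \mathbb{Q}_\omega$ defined from rational approximants of $\sqrt{2}$ whose accuracy improves only along a decreasing family witnessing that $\omega$ is not a P-point is induced by no approximate morphism whatsoever. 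What your construction (and the paper's) genuinely proves is the weaker statement that whenever a (faithful, state-preserving) morphism $K \to L_\omega$ exists, \emph{some} faithful, resp.\ state-preserving, approximate morphism $K \to L$ exists --- these properties are inherited by the maps $\psi_{n_k}$ --- and this weaker form is all that the paper ever uses (Proposition 3.5 and Theorems 3.6 and 3.9). You should either state and prove that weaker version, or impose a hypothesis on $\omega$ under which the strong version holds.
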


\begin{proof}
The proof is nearly identical to the proof of Proposition \ref{prop:approxmorphism}, so we omit most of the details.  The only somewhat subtle issue is showing that if $(\lambda_n)_{n\geq1}$ is an approximate morphism from $K$ to $L$, then for every $x \in K$, the sequence $(\lambda_n(x))_n$ is in $\ell^\infty(L)$.  To see this, note that there is a $d \in \mathbb{N}$ with $- d u_K \leq x \leq d u_K$.  Since $d u_K - x \geq 0$, for all sufficiently large $n$,
\[ 0 \leq \lambda_n(d u_K - x) = d \lambda_n(u_K) - \lambda_n(x) = d u_L - \lambda_n(x), \]
and hence $\lambda_n(x) \leq d u_L$ for all large $n$.  Similarly, $-du_L \leq \lambda_n(x)$ for all large $n$.  Hence, by increasing $d$ if necessary, we have $-d u_L \leq \lambda_n(x) \leq d u_L$ for all $n \in \mathbb{N}$ and $(\lambda_n(x))_{n\geq 1}$ is bounded as claimed.
\end{proof}

We end this preliminary section with a result that identifies the the K$_0$-group of a norm ultrapower of the UHF alegbra $\mathcal{Q}$ with the ordered group ultrapower of $\mathrm{K}_0(\mathcal{Q})$.  This is a special case of a result due to Dadarlat and Eilers (see Section 3.2 in \cite{DadarlatEilers}), however, we give a proof in our context.  Recall that there is an isomorphism
\[ (\mathrm{K}_0(\mathcal{Q}),\mathrm{K}_0(\mathcal{Q})^+,[1_{\mathcal{Q}}]_0) \overset{\cong}{\longrightarrow} (\mathbb{Q},\mathbb{Q}^+, 1) \]
induced by the unique trace on $\mathcal{Q}$.

\begin{proposition}\label{prop:K0UHFUltrapower}
There is an isomorphism $\theta: \mathrm{K}_0(\mathcal{Q}_\omega) \rightarrow \mathrm{K}_0(\mathcal{Q})_{\omega}\cong\mathbb{Q}_{\,\omega}$ of ordered abelian groups satisfying
\[\theta\left([\pi_\omega((p_n)_n)]_0\right)=\kappa_\omega\left(([p_n]_0)_n\right),\quad p_n\in\mathcal{P}(\mathcal{Q}).\]
\end{proposition}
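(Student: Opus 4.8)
The plan is to build $\theta$ directly from lifts of projections, extend it to the Grothendieck group, and then verify separately that it is injective, surjective, and order preserving. Throughout I will use the standard identifications $\mathbb{M}_k(\mathcal{Q}_\omega) = (\mathbb{M}_k(\mathcal{Q}))_\omega$ and $\mathbb{M}_k(\mathcal{Q}) \cong \mathcal{Q}$, so that it suffices to understand projections and no genuinely new phenomena arise in matrix amplifications. The first ingredient is a lifting step: every projection $P \in \mathcal{P}(\mathbb{M}_k(\mathcal{Q}_\omega))$ can be written as $P = \pi_\omega((p_n)_n)$ with each $p_n \in \mathcal{P}(\mathbb{M}_k(\mathcal{Q}))$. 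Indeed, lifting $P$ to a self-adjoint bounded sequence $(a_n)_n$, the relation $P^2 = P$ forces $\|a_n^2 - a_n\| \to 0$ along $\omega$, so for $n$ near $\omega$ the spectrum of $a_n$ concentrates near $\{0,1\}$ and applying a fixed continuous cut-off function produces projections $p_n$ with $\|p_n - a_n\| \to 0$ along $\omega$; setting $p_n$ arbitrarily on the remaining set of indices gives the desired lift. I would then set $\theta([P]_0) := \kappa_\omega(([p_n]_0)_n)$, noting that $[p_n]_0 \in \mathrm{K}_0(\mathcal{Q}) \cong \mathbb{Q}$ corresponds to $\tau(p_n) \in [0,1]$ (more generally to a value in $[0,k]$ in the amplified picture), so that $([p_n]_0)_n$ is order-bounded and lands in $\ell^\infty(\mathbb{Q})$.

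The crux of well-definedness is to show that $[P] \mapsto \kappa_\omega(([p_n]_0)_n)$ is a well-defined monoid homomorphism on the Murray--von Neumann semigroup $V(\mathcal{Q}_\omega)$; it then extends uniquely to a group homomorphism $\theta$ on $\mathrm{K}_0(\mathcal{Q}_\omega)$ by the universal property of the Grothendieck construction, and additivity is immediate since $(p_n \oplus p_n')_n$ lifts $P \oplus P'$. Two points require the standard perturbation calculus. If $(p_n)_n$ and $(q_n)_n$ both lift $P$, then $\|p_n - q_n\| \to 0$ along $\omega$, and close projections in a C$^*$-algebra are Murray--von Neumann equivalent, so $[p_n]_0 = [q_n]_0$ for $n$ near $\omega$ and the two germs coincide; independence of the lift follows. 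Similarly, if $P \sim P'$ in $\mathcal{Q}_\omega$ via a partial isometry $V = \pi_\omega((v_n)_n)$, then $\|v_n^* v_n - p_n\|, \|v_n v_n^* - p_n'\| \to 0$ along $\omega$, and correcting $v_n$ to an honest partial isometry (via polar decomposition applied to $v_n p_n$) exhibits $p_n \sim p_n'$ in $\mathbb{M}_k(\mathcal{Q})$ for $n$ near $\omega$, whence the germs agree. This establishes that $\theta$ is a well-defined ordered-group morphism: positivity is clear because each $[p_n]_0 \geq 0$, and $\theta([1_{\mathcal{Q}_\omega}]_0) = \kappa_\omega((1)_n) = u_{\mathbb{Q}_\omega}$.

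For surjectivity and for the reverse order inclusion $\mathbb{Q}_\omega^+ \subseteq \theta(\mathrm{K}_0(\mathcal{Q}_\omega)^+)$, I would start from a germ $\kappa_\omega((x_n)_n) \in \mathbb{Q}_\omega$; after shifting by a multiple of the unit I may assume $0 \le x_n \le d$ for all $n$. Since $\mathrm{K}_0(\mathcal{Q})^+ = \mathbb{Q}^+$ and $\mathcal{Q}$ has comparison, each nonnegative rational $x_n \le d$ is realized as $x_n = [p_n]_0$ for a genuine projection $p_n \in \mathbb{M}_d(\mathcal{Q})$; then $P = \pi_\omega((p_n)_n)$ satisfies $\theta([P]_0) = \kappa_\omega((x_n)_n)$, and undoing the shift gives a preimage (positive when the $x_n$ are). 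Injectivity is where the specific structure of $\mathcal{Q}$ enters decisively: if $\theta([P]_0 - [P']_0) = 0$ then $[p_n]_0 = [p_n']_0$ for $n$ near $\omega$, and because $\mathcal{Q}$ has stable rank one (hence cancellation of projections), this forces $p_n \sim p_n'$ in $\mathbb{M}_k(\mathcal{Q})$ along $\omega$; assembling these equivalences into a partial isometry in the ultrapower yields $P \sim P'$, so $[P]_0 = [P']_0$ and the kernel is trivial.

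I expect the main obstacle to be the two-way passage between germ-level identities in $\mathbb{Q}_\omega$ and Murray--von Neumann equivalence in the ultrapower $\mathcal{Q}_\omega$: both the well-definedness (equivalence $\Rightarrow$ equal $\mathrm{K}_0$ germs) and the injectivity (equal $\mathrm{K}_0$ germs $\Rightarrow$ equivalence) hinge on the same perturbation and cancellation package, used in opposite directions. It is precisely the cancellation property of $\mathcal{Q}$, coming from stable rank one, that makes $\theta$ injective rather than merely surjective, so this is the step I would take the most care to state correctly. The genuinely infinitesimal elements of $\mathbb{Q}_\omega$ (for instance the germ of $(1/n)_n$, which is nonzero yet killed by $\mu_\omega$) show that one cannot shortcut the argument by passing through $\tau_\omega$; the rational values $[p_n]_0$ must be tracked index-by-index, which is exactly what the ordered-group ultrapower $\mathbb{Q}_\omega$ records.
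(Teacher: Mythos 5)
Your proof is correct, and it reaches the statement by a genuinely different organization than the paper's. You construct $\theta$ directly on the Murray--von Neumann semigroup of $\mathcal{Q}_\omega$: lift a projection to a sequence of honest projections via functional calculus, pass to the germ of its $\mathrm{K}_0$-classes, check well-definedness by perturbation (close projections are equivalent; an approximate partial isometry between lifts can be corrected to an exact one), and then prove injectivity and surjectivity germ-by-germ, with cancellation in $\mathcal{Q}$ doing the decisive work in both directions. The paper instead works one level up: it first proves that the coordinate-wise map $\tilde{\theta} \colon \mathrm{K}_0(\ell^\infty(\mathcal{Q})) \rightarrow \ell^\infty(\mathrm{K}_0(\mathcal{Q}))$ is an isomorphism of ordered abelian groups --- using exactly your two key ingredients, cancellation for injectivity and realization of bounded positive sequences by projections for surjectivity, but index-by-index, with no ultrafilter quantifiers --- then shows $\tilde{\theta}$ restricts to a surjection $\mathrm{K}_0(c_\omega(\mathcal{Q})) \rightarrow c_\omega(\mathrm{K}_0(\mathcal{Q}))$, and finally descends to the quotients by a diagram chase between two exact rows, where lifting of projections gives exactness of the top row. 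The trade-off is instructive: the paper's $\ell^\infty$-level map is induced by the coordinate evaluations $\ell^\infty(\mathcal{Q}) \rightarrow \mathcal{Q}$, so its well-definedness comes for free from functoriality of $\mathrm{K}_0$, and no equivalence argument ever mentions a set in $\omega$; the cost is dealing with $\mathrm{K}_0$ of the non-unital ideal $c_\omega(\mathcal{Q})$ and the half-exactness needed for the top row. Your route avoids non-unital K-theory and exact sequences altogether, but precisely because the coordinate maps do not descend to $\mathcal{Q}_\omega$, the well-definedness of your assignment on projection classes is a genuine obligation rather than a formality --- one you correctly discharge with the perturbation lemmas, which is also where your approach needs the most care. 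Both proofs ultimately rest on the same two facts about $\mathcal{Q}$ (cancellation, and realizability of every rational in $[0,d]$ by a projection in $\mathbb{M}_d(\mathcal{Q})$); yours packages them at the level of germs, the paper's at the level of sequences.
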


\begin{proof}
To ease notation we will identify $\mathbb{M}_{d}(\ell^\infty(\mathcal{Q}))$ with $\ell^\infty(\mathbb{M}_{d}(\mathcal{Q}))$ for each $d\in\mathbb{N}$. It is routine to verify there is a morphism $\tilde{\theta} : \mathrm{K}_0(\ell^\infty(\mathcal{Q})) \rightarrow \ell^\infty(\mathcal{K}_0(\mathcal{Q}))$ such that $\tilde{\theta}([(p_n)_n]_0) = ([p_n]_0)_n$ for all projections $(p_n)_n \in \mathbb{M}_d(\ell^\infty(\mathcal{Q}))$ and $d \in \mathbb{N}$.  We claim $\tilde{\theta}$ is an isomorphism of ordered abelian groups.

To show $\tilde{\theta}$ is injective, suppose $p = (p_n)_n$ and $q = (q_n)_n$ are projections in $\mathbb{M}_d(\ell^\infty(\mathcal{Q}))$ with $\tilde{\theta}([p]_0) = \tilde{\theta}([q]_0)$.  Then $[p_n]_0 = [q_n]_0$ in $\mathrm{K}_0(\mathcal{Q})$ for all $n \in \mathbb{N}$.  Since $\mathcal{Q}$ has cancellation, there is, for all $n \in \mathbb{N}$, a partial isometry $v_n \in \mathbb{M}_d(\mathcal{Q})$ with initial projection $p_n$ and final projection $q_n$.  Now, $v = (v_n)_n$ is a partial isometry in $\mathbb{M}_d(\ell^\infty(\mathcal{Q}))$ with initial projection $p$ and final projection $q$.  Hence $[p]_0 = [q]_0$ and $\tilde{\theta}$ is injective.

On the other hand, given $x = (x_n)_n \in \ell^\infty(\mathrm{K}_0(\mathcal{Q}))^+$, there is a $d \in \mathbb{N}$ such that $0 \leq x_n \leq d [1]_0$ for all $n \in \mathbb{N}$.  Again, as $\mathcal{Q}$ has cancellation, there are projections $p_n \in \mathbb{M}_d(\mathcal{Q})$ with $[p_n]_0 = x_n$ in $\mathrm{K}_0(\mathcal{Q})$ for all $n \in \mathbb{N}$.  Then $p = (p_n)_n$ is a projection in $\mathbb{M}_d(\ell^\infty(\mathcal{Q}))$ and $\tilde{\theta}([p]_0) = x$.  Hence $\tilde{\theta}$ is surjective.  Note that the proof also shows the inverse of $\tilde{\theta}$ is positive and hence $\tilde{\theta}$ is an isomorphism of ordered abelian groups.

Considerations similar to those above shows $\tilde{\theta}$ restricts to a surjective map $\tilde{\theta}_0$ from $\mathrm{K}_0(c_\omega(\mathcal{Q}))$ onto $c_\omega(\mathrm{K}_0(\mathcal{Q}))$.  As projections in $\mathbb{M}_d(\mathcal{Q}_\omega)$ lift to projections in $\mathbb{M}_d(\ell^\infty(\mathcal{Q}))$, we have a commutative diagram
\[ \begin{tikzcd}
        & \mathrm{K}_0(c_\omega(\mathcal{Q})) \arrow{r} \arrow{d}{\tilde{\theta}_0} & \mathrm{K}_0(\ell^\infty(\mathcal{Q})) \arrow{r}{\mathrm{K}_0(\pi_\omega)} \arrow{d}{\tilde{\theta}} & \mathrm{K}_0(\mathcal{Q}_\omega) \arrow{r} \arrow[dotted]{d}{\exists \, \theta} & 0 \\ 0 \arrow{r} & c_\omega(\mathrm{K}_0(\mathcal{Q})) \arrow{r} & \ell^\infty(\mathrm{K}_0(\mathcal{Q})) \arrow{r}{\kappa_\omega} & \mathrm{K}_0(\mathcal{Q})_\omega \arrow{r} & 0
\end{tikzcd} \]
with exact rows.  As $\tilde{\theta}$ is an isomorphism and $\tilde{\theta}_0$ is surjective, a diagram chase shows the existence of the desired isomorphism $\theta : \mathrm{K}_0(\mathcal{Q}_\omega) \rightarrow \mathrm{K}_0(\mathcal{Q})_\omega$.
\end{proof}

\section{MF Algebras, MF Traces, and Permanence Properties}\label{sec:MFAlgebras}

Quasidiagonality was originally introduced by Halmos in the context of single operator theory.  The notion was imported to C$^*$-algebra theory by Voiculescu and was defined in terms of representations of C$^*$-algebras as quasidiagonal operators.  Voiculescu also obtained a characterization of quasidiagonal C$^*$-algebras in terms of completely positive finite rank approximations (see Chapter 8 of \cite{BrownOzawa} for a good introduction).  By removing the complete positivity assumptions from the approximations, one obtains the MF algebras of Blackadar and Kirchberg introduced in \cite{BlackadarKirchberg} (see also Chapter 11 of \cite{BrownOzawa}).

Building on Alain Connes's work on injective factors, N. Brown introduced in \cite{BrownQDTraces} the notion of a quasidiagonal trace by replacing the faithful approximate morphisms with trace-preserving approximate morphisms.  Since their conception, quasidiagonal traces have played a fundamental role in the classification and regularity theories of nuclear C$^*$-algebras.  Perhaps the most definitive result in this direction was obtained in \cite{ElliottGongLinNiu} where it was shown that if $A$ is a separable, simple, unital C$^*$-algebra which satisfies the UCT, has finite nuclear dimension, and every trace on $A$ is quasidiagonal, then $A \otimes \mathcal{Q}$ admits certain internal approximations, which is in fact enough to show that $A$ is classifiable in the sense of Elliott as was shown in \cite{GongLinNiu}.

By removing complete positivity from the approximations, we introduce the notion of an MF trace on a C$^*$-algebra.  Whereas quasidiagonal traces appear to be closely related to amenability, there is no clear obstruction for a trace to be MF.  We now state formally the definitions of QD and MF for both C$^*$-algebras and traces on C$^*$-algebras.

\begin{definition}\label{DefnMF} Let $A$ be a C$^*$-algebra.
\begin{enumerate}
  \item A C$^*$-algebra $A$ is called \emph{matricial field (MF)} if for every finite set $\Omega \subseteq A$ and $\varepsilon > 0$, there is an $n \geq 1$ and a $\ast$-linear map $\varphi : A \rightarrow \mathbb{M}_n$ such that
  \begin{enumerate}
    \item $\|\varphi(ab) - \varphi(a)\varphi(b) \| < \varepsilon$, and
    \item $\big| \| \varphi(a) \| - \|a\| \big| < \varepsilon$.
  \end{enumerate}
  for every $a, b \in \Omega$.
\item A trace $\tau$ on $A$ is called \emph{matricial field (MF)} if for every finite set $\Omega \subseteq A$ and $\varepsilon > 0$, there is an $n \geq 1$ and a $\ast$-linear map $\varphi : A \rightarrow \mathbb{M}_n$ satisfying
  \begin{enumerate}
    \item $\|\varphi(ab) - \varphi(a)\varphi(b) \| < \varepsilon$, and
    \item $|\tau(a) - \operatorname{tr}_n(\varphi(a)) | < \varepsilon$,
  \end{enumerate}
  for every $a, b \in \Omega$.
\end{enumerate}
An algebra $A$ or a trace $\tau$ is \emph{quasidiagonal (QD)} if the map $\varphi$ can be chosen to be completely positive and contractive.
\end{definition}

A few observations are in order. When $A$ is unital the map $\varphi$ can be chosen to be unital. In the separable case, $A$ is MF if and only if there is a faithful approximate morphism $(\varphi_n:A\rightarrow\mathbb{M}_{k_n})_{n\geq1}$. Similarly, a trace $\tau$ on $A$ is MF if and only if there is a trace-preserving approximate morphism $(\varphi_n:A\rightarrow\mathbb{M}_{k_n})_{n\geq1}$.

Some permanence properties of MF algebras and traces are fairly straightforward. For instance, it is clear that the MF and QD properties pass to subalgebras. Similarly, if $\tau$ is an MF (resp. QD) trace on a unital C$^*$-algebra $A$, and $B \subseteq A$ is a unital subalgebra, then $\tau|_B$ is an MF (resp. QD) trace.  Also, $A$ is MF (resp. QD) if and only if for every separable subalgebra $B \subseteq A$, the algebra $B$ is MF (resp. QD); the analogous result for traces also holds.

The next two propositions give some other easy permanence properties of MF algebras and MF traces.  In this section, we consider more delicate permanence properties for MF algebras and traces such as tensor products, crossed products, quotients, extensions, and a Mayer-Vietoris-type property.

\begin{proposition}\label{prop:permMF}
Let $A$ be a separable C*-algebra.
\begin{enumerate}
  \item A is MF if and only if there is an embedding $A \hookrightarrow \mathcal{Q}_\omega$.
  \item If $(B_n)_{n \geq 1}$ is a sequence of MF algebras and there is a faithful approximate morphism $(A\rightarrow B_n)_{n \geq 1}$, then $A$ is also MF.
  \item If $A$ is an MF algebra, then $A_\omega$ is MF.
\end{enumerate}
\end{proposition}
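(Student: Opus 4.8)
The three parts build on one another and on Proposition \ref{prop:approxmorphism}, so the plan is to prove them in order.

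For part (1), I would argue both implications through the passage between approximate morphisms and morphisms into the ultrapower. If $A$ is MF, the remark following Definition \ref{DefnMF} provides a faithful approximate morphism $(\varphi_n : A \to \mathbb{M}_{k_n})_{n \geq 1}$; composing with unital embeddings $\mathbb{M}_{k_n} \hookrightarrow \mathcal{Q}$ (available since $\mathcal{Q}$ is the universal UHF algebra) yields a faithful approximate morphism $A \to \mathcal{Q}$, and Proposition \ref{prop:approxmorphism}(1) promotes this to a faithful, hence isometric, morphism $A \to \mathcal{Q}_\omega$. Conversely, given an embedding $A \hookrightarrow \mathcal{Q}_\omega$, Proposition \ref{prop:approxmorphism}(2) produces a faithful approximate morphism $(\psi_n : A \to \mathcal{Q})_{n\geq1}$. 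To land in matrices I would write $\mathcal{Q} = \overline{\bigcup_k \mathbb{M}_{m_k}}$ and use the canonical conditional expectations $E_k : \mathcal{Q} \to \mathbb{M}_{m_k}$, which satisfy $E_k(x) \to x$ for every $x \in \mathcal{Q}$ and fix $\mathbb{M}_{m_k}$ pointwise. For fixed finite $\Omega$ and $\varepsilon$, choosing $n$ large and then $k$ large enough that $E_k$ barely moves $\psi_n(\Omega)$ and the products $\psi_n(a)\psi_n(b)$, the composite $E_k\circ\psi_n : A \to \mathbb{M}_{m_k}$ is approximately multiplicative and isometric on $\Omega$, where the multiplicative estimate uses that $E_k$ fixes its range. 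A diagonalization over an increasing exhaustion of $A$ packages these into a single faithful approximate morphism into matrices, so $A$ is MF.

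Part (2) is a ``composition of faithful approximate morphisms'' statement, which I would verify directly against the finite-set definition. Fix finite $\Omega \subseteq A$ and $\varepsilon > 0$, and choose $n$ so that the given approximate morphism $\phi_n : A \to B_n$ is very multiplicative and isometric on $\Omega$. Since $B_n$ is MF, I would apply the definition to the finite set $\Omega' \subseteq B_n$ consisting of the images $\phi_n(a)$, the products $\phi_n(a)\phi_n(b)$, and---crucially---the multiplicative defects $\phi_n(ab) - \phi_n(a)\phi_n(b)$ for $a,b \in \Omega$, obtaining a $\ast$-linear $\rho : B_n \to \mathbb{M}_k$ that is approximately multiplicative and isometric on $\Omega'$. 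The composite $\rho\circ\phi_n$ then works on $\Omega$: approximate isometry is immediate, and for multiplicativity one splits $\rho(\phi_n(ab)) - \rho(\phi_n(a))\rho(\phi_n(b))$ into $\rho$ of the (small) defect plus the multiplicative error of $\rho$ on $\phi_n(a),\phi_n(b)$. The only subtlety is that a general MF approximation $\rho$ need not be contractive, so one cannot bound $\|\rho(\text{defect})\|$ a priori; including the defects in $\Omega'$, where $\rho$ is approximately isometric, is exactly what controls this term.

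For part (3), since MF is determined by separable subalgebras (as noted before Proposition \ref{prop:permMF}), it suffices to embed an arbitrary separable $C \subseteq A_\omega$ isometrically into $\mathcal{Q}_\omega$, after which part (1) gives that $C$ is MF. Using part (1) for the MF algebra $A$, fix a faithful approximate morphism $(\varphi_m : A \to \mathcal{Q})_{m\geq1}$. Choose a countable generating set $\{c_i\}$ of $C$ with lifts $c_i = \pi_\omega((a_{i,n})_n)$, $\|a_{i,n}\| \leq M$. The heart of the argument is a reindexing: enumerating the countably many $\ast$-polynomial-norm conditions that determine $C$, I would choose for each $n$ an index $m(n)$ so that $\varphi_{m(n)}$ is $\tfrac1n$-multiplicative and $\tfrac1n$-isometric on the finite set of $n$-th coordinates $a_{i,n}$, their relevant subproducts, and the polynomial values involved in the first $n$ conditions. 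Setting $q_{i,n} = \varphi_{m(n)}(a_{i,n})$ gives bounded sequences, and $c_i \mapsto \pi_\omega((q_{i,n})_n)$ preserves every $\ast$-polynomial norm: for a fixed polynomial $P$ the approximate multiplicativity and isometry of $\varphi_{m(n)}$ let one pull $\varphi_{m(n)}$ through and recover $\lim_{n\to\omega}\|P(a_{\cdot,n})\| = \|P(c)\|$, the errors vanishing as $n \to \infty$. Hence the assignment extends to an isometric $\ast$-homomorphism $C \hookrightarrow \mathcal{Q}_\omega$.

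The main obstacle throughout is in part (3): since evaluation at a single coordinate does not descend to the ultrapower, one cannot simply map $A_\omega$ into a fixed matrix algebra, and the construction must genuinely take an ultrafilter limit. Making the coordinatewise application of the $\varphi_m$ into a single well-defined isometric embedding requires the diagonal choice of $m(n)$ to simultaneously respect the countable family of polynomial-norm conditions defining $C$; this reindexing is the only step that is more than bookkeeping, and care is needed to ensure the accumulated multiplicative errors in each fixed polynomial vanish along $\omega$.
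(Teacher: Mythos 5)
Your proof is correct, but it is organized rather differently from the paper's, most visibly in part (3) and in the logical ordering. The paper proves the parts in the order (2), (3), (1): part (3) there is a two-line consequence of Proposition \ref{prop:approxmorphism}(2) --- the inclusion of a separable subalgebra $B \subseteq A_\omega$ lifts to a faithful approximate morphism $(B \to A)_{n \geq 1}$, and part (2) applied with the constant sequence $B_n = A$ finishes --- and the backward direction of (1) is then deduced by showing $\mathcal{Q}$ itself is MF (conditional expectations onto an exhausting sequence of finite-dimensional subalgebras, obtained via Arveson's Extension Theorem), so that $\mathcal{Q}_\omega$ is MF by (3) and one concludes by passing to the subalgebra $A \subseteq \mathcal{Q}_\omega$. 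You instead prove (1) first, carrying out by hand (composition with the conditional expectations $E_k$ plus a diagonalization) essentially what the paper's detour through (3) automates, and then prove (3) by an explicit reindexing: applying $\varphi_{m(n)} : A \to \mathcal{Q}$ coordinatewise to lifts and verifying all $\ast$-polynomial norm conditions along $\omega$. That manual diagonalization is sound --- putting the partial products and polynomial values into the finite sets is exactly what is needed, and the accumulated errors do vanish along $\omega$ --- but it re-derives, in this special case, precisely what Proposition \ref{prop:approxmorphism}(2) combined with your own part (2) would give for free (lift the inclusion $C \hookrightarrow A_\omega$ to a faithful approximate morphism $C \to A$, then compose). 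The paper's ordering buys a much shorter proof of (3); yours has the merit of exhibiting the embedding $C \hookrightarrow \mathcal{Q}_\omega$ concretely and of making (1) self-contained. Your part (2), including the crucial device of placing the multiplicative defects $\phi_n(ab) - \phi_n(a)\phi_n(b)$ inside the finite set $\Omega'$ to compensate for the possible non-contractivity of $\rho$, is identical to the paper's argument.
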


\begin{proof}
To show (2), let $\Omega\subset A$ be a finite set and $\varepsilon>0$. Let $(\psi_n)_{n \geq 1}$ denote the given approximate morphism and choose $n \in \mathbb{N}$ such that $\psi_n:A\rightarrow B_n$ is approximately multiplicative and isometric within $\varepsilon/3$ on $\Omega$.  Also, choose a $\ast$-linear map $\rho:B_n \rightarrow \mathbb{M}_{d}$ that is approximately multiplicative and isometric within $\varepsilon/3$ on
\[ \Omega':=\psi_n(\Omega)\cup\left\{\psi_n(aa')-\psi_n(a)\psi_n(a')\ |\ a,a'\in\Omega\right\}.\]
The map $\varphi:=\rho\circ\psi_n:A\rightarrow\mathbb{M}_{d}$ is $\ast$-linear, and, for $a,a'\in\Omega$,
\begin{align*}
  \|\varphi(aa')-&\varphi(a)\varphi(a')\| \\
  &\leq \|\rho\left(\psi_n(aa')-\psi_n(a)\psi_n(a')\right)\|+\|\rho(\psi_n(a)\psi_n(a'))-\rho(\psi_n(a))\rho(\psi_n(a'))\|\\
  &\leq \|\psi_n(aa')-\psi_n(a)\psi_n(a')\|+\varepsilon/3+\varepsilon/3<\varepsilon.
\end{align*}
Also,
\[\big|\|\varphi(a)\|-\|a\|\big|\leq\big|\|\rho\left(\psi_n(a)\right)\|-\|\psi_n(a)\|\big|+\big|\|\psi_n(a)\|-\|a\|\big|<2\varepsilon/3<\varepsilon.\]
for every $a\in\Omega$. This shows that $A$ is MF.

For (3), note that a C$^*$-algebra is MF if and only if every separable C$^*$-subalgebra is MF.  Given a separable subalgebra $B \subseteq A_\omega$, the inclusion $B \hookrightarrow A_\omega$ lifts to a faithful approximate morphism $(B \rightarrow A)_{n\geq1}$ by Proposition \ref{prop:approxmorphism}.  Hence $B$ is MF by (3) and therefore $A_\omega$ is MF.

Finally, for (1), if $A$ is MF, there is a faithful approximate morphism $(\varphi_n : A \rightarrow \mathbb{M}_{k_n})_{n \geq 1}$ for integers $k_n \in \mathbb{N}$.  Composing the $\varphi_n$ with embeddings $M_{k_n} \subseteq \mathcal{Q}$ produces a faithful approximate morphism from $A$ to $\mathcal{Q}$. By Proposition \ref{prop:approxmorphism}, there is a faithful morphism $A \rightarrow \mathcal{Q}_\omega$.

Conversely, fix an increasing sequence $(B_n)_{n \geq 1}$ of finite dimensional subalgebras of $\mathcal{Q}$ with dense union.  By Arveson's Extension Theorem, there are conditional expectations $\mathbb{E}_n : \mathcal{Q} \rightarrow B_n$.  The sequence $(\mathbb{E}_n)_{n\geq1}$ is a faithful approximate morphism from $\mathcal{Q}$ into finite dimensional C$^*$-algebras and hence $\mathcal{Q}$ is MF.  By part (3), $\mathcal{Q}_\omega$ is MF and the result follows since subalgebras of MF algebras are MF.
\end{proof}

The following is an exact analogue of Proposition \ref{prop:permMF} for MF traces.  The proof is nearly identical and is left to the reader.

\begin{proposition}\label{prop:permMFTrace}
Let $A$ be a separable C*-algebra and let $\tau$ be a trace on $A$.
\begin{enumerate}
  \item $\tau$ is MF if and only if there is a trace-preserving morphism $A \rightarrow \mathcal{Q}_\omega$.
  \item If $(B_n)_{n\geq1}$ is a sequence of C*-algebras with MF traces $\tau_n$ on $B_n$ and there is a trace-preserving approximate morphism $(A \rightarrow B_n)_{n\geq 1}$, then $\tau$ is MF.
  \item If $\tau$ is MF, then the trace $\tau_\omega$ on $A_\omega$ is MF.
\end{enumerate}
\end{proposition}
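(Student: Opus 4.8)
The plan is to transcribe the proof of Proposition~\ref{prop:permMF} almost line by line, replacing the word ``faithful'' by ``trace-preserving'' and each approximate-isometry estimate $\big|\|\varphi(a)\| - \|a\|\big| < \varepsilon$ by the corresponding approximate-trace estimate $|\operatorname{tr}_n(\varphi(a)) - \tau(a)| < \varepsilon$. As in the norm-theoretic case, part~(2) is the technical engine, and parts~(1) and~(3) are deduced from it together with Proposition~\ref{prop:approxmorphism}.

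For part~(2), I would fix a finite set $\Omega \subseteq A$ and $\varepsilon > 0$, and write $(\psi_n : A \to B_n)_{n \geq 1}$ for the given trace-preserving approximate morphism. First choose $n$ large enough that $\psi_n$ is multiplicative within $\varepsilon/2$ on $\Omega$ and satisfies $|\tau_n(\psi_n(a)) - \tau(a)| < \varepsilon/2$ for all $a \in \Omega$. Since $\tau_n$ is an MF trace on $B_n$, choose a $*$-linear $\rho : B_n \to \mathbb{M}_d$ that is multiplicative within $\varepsilon/2$ and satisfies $|\operatorname{tr}_d(\rho(b)) - \tau_n(b)| < \varepsilon/2$ on the finite set $\Omega' = \psi_n(\Omega) \cup \{\psi_n(aa') - \psi_n(a)\psi_n(a') : a, a' \in \Omega\}$. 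The composite $\varphi = \rho \circ \psi_n$ is approximately multiplicative on $\Omega$ by the same computation as in Proposition~\ref{prop:permMF}, and the trace condition follows from
\[ |\operatorname{tr}_d(\varphi(a)) - \tau(a)| \leq |\operatorname{tr}_d(\rho(\psi_n(a))) - \tau_n(\psi_n(a))| + |\tau_n(\psi_n(a)) - \tau(a)| < \varepsilon \]
for $a \in \Omega$, whence $\tau$ is MF.

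For the forward direction of part~(1), I would compose a trace-preserving approximate morphism $(\varphi_n : A \to \mathbb{M}_{k_n})_{n\geq1}$ witnessing that $\tau$ is MF with the standard unital embeddings $\mathbb{M}_{k_n} \hookrightarrow \mathcal{Q}$; these preserve traces because the unique trace on $\mathcal{Q}$ restricts to $\operatorname{tr}_{k_n}$, so Proposition~\ref{prop:approxmorphism}(1) yields a trace-preserving morphism $A \to \mathcal{Q}_\omega$. For the converse, I would first note that the canonical trace $\operatorname{tr}_{\mathcal{Q}}$ on $\mathcal{Q}$ is MF: the trace-preserving conditional expectations $\mathbb{E}_n : \mathcal{Q} \to B_n$ onto an increasing sequence of finite-dimensional subalgebras with dense union form a trace-preserving approximate morphism into finite-dimensional algebras (on which every trace is plainly MF), so part~(2) applies. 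Part~(3) then upgrades this to the statement that $(\operatorname{tr}_{\mathcal{Q}})_\omega$ is MF on $\mathcal{Q}_\omega$. Given a trace-preserving morphism $\varphi : A \to \mathcal{Q}_\omega$ with $\tau = (\operatorname{tr}_{\mathcal{Q}})_\omega \circ \varphi$, composing the finite-dimensional approximations of $(\operatorname{tr}_{\mathcal{Q}})_\omega$ with $\varphi$ (the cross terms $\varphi(aa') - \varphi(a)\varphi(a')$ vanish since $\varphi$ is a genuine morphism) shows $\tau$ is MF; this pullback replaces the ``subalgebras of MF algebras are MF'' step of Proposition~\ref{prop:permMF}, which is needed here since $\varphi$ is not assumed injective. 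Finally, for part~(3), given a finite $\Omega \subseteq A_\omega$ I would enclose it in a separable subalgebra $B$, lift the inclusion $B \hookrightarrow A_\omega$ to a trace-preserving approximate morphism $(B \to A)_{n\geq1}$ via Proposition~\ref{prop:approxmorphism}(2), and apply part~(2) with the constant sequence $B_n = A$, $\tau_n = \tau$ to conclude that the restricted trace $\tau_\omega|_B$ is MF, and hence (after extending the resulting finite-dimensional maps $*$-linearly from $B$ to $A_\omega$) that $\tau_\omega$ is MF.

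The only genuine departures from the purely norm-theoretic argument — and the steps I expect to require the most care — are bookkeeping checks that the auxiliary maps respect traces rather than norms: namely that the embeddings $\mathbb{M}_{k_n} \hookrightarrow \mathcal{Q}$ and the conditional expectations $\mathbb{E}_n : \mathcal{Q} \to B_n$ are trace-preserving, together with the handling of the possibly non-injective $\varphi$ in the converse of~(1) by pullback instead of restriction. All of these are routine, so the proof is indeed nearly identical to that of Proposition~\ref{prop:permMF}.
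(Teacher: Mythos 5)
Your overall architecture (prove (2) first, derive (3) from it, then (1), and handle the possible non\-/injectivity of $\varphi$ in the converse of (1) by pulling back rather than restricting) is sound and matches what the authors intend by ``nearly identical''. However, the proof of part (2) --- which you correctly identify as the engine --- has a genuine gap at the step ``the composite $\varphi = \rho\circ\psi_n$ is approximately multiplicative on $\Omega$ by the same computation as in Proposition \ref{prop:permMF}''. That computation splits
\[
\varphi(aa')-\varphi(a)\varphi(a') \;=\; \rho\bigl(\psi_n(aa')-\psi_n(a)\psi_n(a')\bigr)\;+\;\bigl(\rho(\psi_n(a)\psi_n(a'))-\rho(\psi_n(a))\rho(\psi_n(a'))\bigr),
\]
and in the norm case the first summand is controlled precisely because $\rho$ is \emph{approximately isometric} on $\Omega'$, which contains the norm-small element $z=\psi_n(aa')-\psi_n(a)\psi_n(a')$. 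In your tracial version you replaced that clause by $|\operatorname{tr}_d(\rho(b))-\tau_n(b)|<\varepsilon/2$, and this gives no control whatsoever on $\|\rho(z)\|$: a $*$-linear map can be approximately multiplicative and approximately trace-preserving on $\Omega'$ and still send $z$ to a traceless matrix of enormous norm, since smallness of a trace never bounds a norm and the products constraining $\rho$ need not lie in $\Omega'$. So, as written, the composite may fail condition (a) of Definition \ref{DefnMF}.

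The missing ingredient is the automatic approximate boundedness of approximately multiplicative unital $*$-linear maps --- exactly the Connes--Higson argument the paper already runs in the proof of Proposition \ref{prop:approxmorphism}. Concretely: for each difference element $z$ set $w_z=(\|z\|^2 1-z^*z)^{1/2}$ in $B_n$ (unitizing if necessary), and demand that $\rho$, taken unital, be approximately multiplicative within a suitably small $\delta$ on the enlarged set $\Omega'\cup\{z^*,\,w_z,\,w_z^*\}$. Then $\rho(z)^*\rho(z)+\rho(w_z)^*\rho(w_z)\approx_{2\delta}\rho(z^*z+w_z^*w_z)=\|z\|^2 1$, whence $\|\rho(z)\|^2\le\|z\|^2+2\delta$, and your estimate goes through with adjusted tolerances. (Alternatively, one can route part (2) through Proposition \ref{prop:approxmorphism}, whose proof hides the same trick, composing honest morphisms $B_n\rightarrow\mathcal{Q}_\omega$ with the $\psi_n$; a genuine morphism is contractive, which also kills the first summand.) With this repair, the remainder of your argument --- the trace estimate in (2), the trace-preserving expectations and finite-dimensional traces in (1), and the separable-subalgebra reduction with $*$-linear extension in (3) --- is correct.
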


As eluded to in the introduction, the question of which algebras are MF can be thought of as the C$^*$-analogue of Connes's Embedding Problem.  The connection is made explicit in the result below.

\begin{proposition}\label{prop:ConnesEmbedding}
Let $A$ be a separable C*-algebra with a tracial state $\tau$ and let $\pi_\tau$ denote the GNS representation of $\tau$.  If $\tau$ is MF, then $\pi_\tau(A)''$ admits a trace-preserving embedding into $\mathcal{R}^\omega$, the tracial ultrapower of the hyperfinite $\mathrm{II}_1$ factor $\mathcal{R}$.
\end{proposition}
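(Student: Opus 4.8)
The plan is to first convert the MF hypothesis into a single trace\-/preserving $*$-homomorphism into a tracial ultrapower, and then to promote it to a normal embedding of the whole weak closure $\pi_\tau(A)''$. By Proposition \ref{prop:permMFTrace}(1), the assumption that $\tau$ is MF produces a trace-preserving morphism $\varphi : A \to \mathcal{Q}_\omega$, where $\mathcal{Q}_\omega$ carries the trace $\operatorname{tr}_\omega$ obtained by taking $\omega$-limits of the unique trace on $\mathcal{Q}$. I would then compose $\varphi$ with a trace-preserving $*$-homomorphism $\Psi : \mathcal{Q}_\omega \to \mathcal{R}^\omega$ and use trace-preservation to extend across the GNS closure.

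Next I would construct the bridge $\Psi$. Since $\mathcal{Q}$ is UHF, the GNS representation of its unique trace realizes its weak closure as $\mathcal{R}$, giving a trace-preserving inclusion $\mathcal{Q} \subseteq \mathcal{R}$. Regarding a bounded sequence in $\mathcal{Q}$ as a bounded sequence in $\mathcal{R}$ and passing to its class in the tracial ultrapower defines a $*$-homomorphism $\ell^\infty(\mathcal{Q}) \to \mathcal{R}^\omega$. Because $\|x\|_2 \leq \|x\|$, any sequence that is norm-null along $\omega$ is also $\|\cdot\|_2$-null along $\omega$; hence $c_\omega(\mathcal{Q})$ lies in the kernel and the map descends to $\Psi : \mathcal{Q}_\omega \to \mathcal{R}^\omega$. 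As the trace on $\mathcal{Q}$ agrees with the restriction of the trace on $\mathcal{R}$, both ultrapower traces are the same $\omega$-limit, so $\Psi$ is trace-preserving. Thus $\Phi := \Psi \circ \varphi : A \to \mathcal{R}^\omega$ is a trace-preserving $*$-homomorphism, i.e.\ $\operatorname{tr}_{\mathcal{R}^\omega} \circ \Phi = \tau$.

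Finally I would extend $\Phi$ to $M := \pi_\tau(A)''$. Trace-preservation makes $\Phi$ isometric for the $2$-norms, since $\|\Phi(a)\|_2^2 = \operatorname{tr}_{\mathcal{R}^\omega}(\Phi(a^*a)) = \tau(a^*a) = \|\pi_\tau(a)\xi_\tau\|^2$, where $\xi_\tau$ is the GNS cyclic vector. In particular $\Phi$ annihilates exactly the elements killed by $\pi_\tau$ (here faithfulness of the trace on $\mathcal{R}^\omega$ is used), so $\Phi$ factors as $\Phi = \tilde{\Phi} \circ \pi_\tau$ for a trace-preserving, injective $*$-homomorphism $\tilde{\Phi} : \pi_\tau(A) \to \mathcal{R}^\omega$. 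The $2$-isometry $\tilde{\Phi}$ extends to a Hilbert space isometry $U : L^2(M, \bar{\tau}) \to L^2(\mathcal{R}^\omega)$ intertwining the left actions on the common dense image of $\pi_\tau(A)$; conjugation by $U$ then yields a normal $*$-homomorphism $\bar{\Phi} : M \to \mathcal{R}^\omega$ extending $\tilde{\Phi}$ with $\operatorname{tr}_{\mathcal{R}^\omega} \circ \bar{\Phi} = \bar{\tau}$, which is the desired embedding. I expect this last step to be the main obstacle: one must justify that a trace-preserving $*$-homomorphism defined only on the weakly dense C$^*$-subalgebra $\pi_\tau(A)$ extends \emph{normally} to all of $M$. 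This rests on the standard fact that trace-preserving maps between tracial von Neumann algebras are $\|\cdot\|_2$-continuous, hence normal; the care lies in verifying that $U$ genuinely intertwines the two representations so that conjugation lands inside $\mathcal{R}^\omega$ (using that $\mathcal{R}^\omega$ is in standard form), and that injectivity of $\bar{\Phi}$ follows from faithfulness of the ultrapower trace.
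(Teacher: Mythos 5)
Your overall strategy is the same as the paper's: use Proposition \ref{prop:permMFTrace} to obtain a trace\-/preserving morphism $\varphi\colon A\to\mathcal{Q}_\omega$, pass from $\mathcal{Q}_\omega$ to $\mathcal{R}^\omega$ via the trace-preserving inclusion $\mathcal{Q}\subseteq\mathcal{R}$ together with $\|\cdot\|_2\le\|\cdot\|$, and then extend over the GNS closure. In fact your middle step is slightly leaner than the paper's: the paper establishes the isomorphism $\mathcal{Q}_\omega/J\cong\mathcal{R}^\omega$ (with $J$ the trace-kernel ideal), whose surjectivity requires Kaplansky $2$-norm density of the unit ball of $\mathcal{Q}$ in that of $\mathcal{R}$, whereas you only need the (not necessarily surjective) morphism $\Psi$, which exists for exactly the reason you give. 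Your identification $\ker\Phi=\ker\pi_\tau=N_\tau:=\{a\in A : \tau(a^*a)=0\}$, using traciality and faithfulness of the ultrapower trace, is also correct.

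The one step that fails as stated is the final mechanism. The isometry $U$ is not surjective in general, and conjugation by a proper isometry does not land in $\mathcal{R}^\omega$: writing $p=UU^*$ for the projection onto $\overline{\Phi(A)\hat{1}}$ (where $\hat{y}$ denotes the canonical image of $y$ in the relevant $L^2$-space), the intertwining relation gives $U\pi_\tau(a)U^*=\Phi(a)p$ for $a\in A$, so $x\mapsto UxU^*$ takes values in the corner $p\,\mathcal{B}(L^2(\mathcal{R}^\omega))\,p$ and does not even extend $\tilde\Phi$; moreover $p\notin\mathcal{R}^\omega$ in general (for $A=\mathbb{C}$ it is a rank-one projection, and a $\mathrm{II}_1$ factor in standard form contains none). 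The standard repair---which is the fact you allude to, and which the paper also leaves implicit---uses $U$ only on vectors: given $x\in M$ with $\|x\|\le1$, Kaplansky density yields a net $(a_i)$ in the unit ball of $\pi_\tau(A)$ with $a_i\to x$ strongly, hence $\hat{a}_i\to\hat{x}$ in $\|\cdot\|_2$; then $\widehat{\tilde\Phi(a_i)}=U\hat{a}_i\to U\hat{x}$ in $\|\cdot\|_2$, and since the image of the closed unit ball of $\mathcal{R}^\omega$ in $L^2(\mathcal{R}^\omega)$ is $\|\cdot\|_2$-closed (by weak-operator compactness of the ball), there is a unique $y\in\mathcal{R}^\omega$ with $\|y\|\le1$ and $\hat{y}=U\hat{x}$. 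Setting $\bar\Phi(x)=y$, one checks the relation $Ux\zeta=\bar\Phi(x)U\zeta$ on the dense subspace $\pi_\tau(A)\xi_\tau$, from which multiplicativity, normality, and trace-preservation follow, and injectivity then comes from faithfulness of $\operatorname{tr}_{\mathcal{R}^\omega}$ and of $\bar\tau$ on $M$. So your proof is salvageable with a standard argument, but the assertion that conjugation by $U$ itself produces the embedding is false rather than merely unproved, and is the one point you must rewrite.
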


\begin{proof}
Let $\tau_\omega$ denote the trace on $\mathcal{Q}_\omega$ induced by the unique trace on $\mathcal{Q}$ and let $J = \{ a \in \mathcal{Q}_\omega : \tau_\omega(a^*a) = 0 \}$.  There is a natural embedding $\mathcal{Q} \subseteq \mathcal{R}$ induced by the GNS representation of the unique trace on $\mathcal{Q}$, and the unit ball of $\mathcal{Q}$ is 2-norm dense in the unit ball of $\mathcal{R}$.  It follows that $\mathcal{Q}_\omega / J \cong \mathcal{R}^\omega$ by Kaplansky's Density Theorem.  Hence and trace-preserving morphism $\varphi : A \rightarrow \mathcal{Q}_\omega$ induces a trace-preserving morphism $\pi_\tau(A)'' \rightarrow \mathcal{R}^\omega$ which is faithful by the faithfulness of the trace on $\pi_\tau(A)''$.
\end{proof}

\subsection{Tensor Products.}
It is well known that the minimal tensor product of two QD C$^*$-algebras is QD (Proposition 7.1.12 in \cite{BrownOzawa}).  The corresponding statement for QD traces is also true (Proposition 3.5.7 in \cite{BrownQDTraces}).  In the MF setting, these questions are more subtle.  The key difference is that the algebraic tensor product of two completely positive maps will always be continuous with respect to the minimal tensor norm, whereas the corresponding statement fails for tensor products of $\ast$-linear maps.  Blackadar and Kirchberg showed that the minimal tensor product of two MF algebras is again MF provided that at least one algebra is nuclear (Proposition 3.3.6 in \cite{BlackadarKirchberg}). Here we weaken the nuclearity condition. First we single out a standard tensor product result.

\begin{proposition}\label{prop:TensorsAndUltrapowers}
Suppose $A$ and $B$ are C*-algebras and $B$ is exact.  There is an embedding
\[ A_\omega \otimes B \hookrightarrow (A \otimes B)_\omega \]
given on generators by $\pi_{\omega}\left((a_n)_n\right) \otimes b \mapsto \pi_{\omega}\left((a_n \otimes b)_n\right)$.
\end{proposition}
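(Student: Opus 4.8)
The plan is to build the embedding one level up, at the stage of $\ell^\infty$, and then descend to the ultrapowers, using the exactness of $B$ precisely to control how the minimal tensor norm interacts with the quotient defining $A_\omega$.

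First I would produce a $*$-homomorphism $\Phi : \ell^\infty(A) \otimes B \to \ell^\infty(A \otimes B)$ determined on simple tensors by $(a_n)_n \otimes b \mapsto (a_n \otimes b)_n$. Writing $\mathrm{ev}_n : \ell^\infty(A) \to A$ for the $n$-th coordinate evaluation, the map $\theta_n := \mathrm{ev}_n \otimes \id_B : \ell^\infty(A) \otimes B \to A \otimes B$ is a $*$-homomorphism and is exactly the $n$-th coordinate of $\Phi$. Fixing faithful representations $\rho_A$ of $A$ and $\rho_B$ of $B$, the representation $\big(\bigoplus_n \rho_A \circ \mathrm{ev}_n\big) \otimes \rho_B$ is faithful on $\ell^\infty(A) \otimes B$ and decomposes as $\bigoplus_n (\rho_A \otimes \rho_B) \circ \theta_n$; since $\rho_A \otimes \rho_B$ is isometric on $A \otimes B$, this gives $\|x\| = \sup_n \|\theta_n(x)\| = \|\Phi(x)\|$, so $\Phi$ is isometric. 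This step does not use exactness.

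Next I would descend $\Phi$ to the ultrapowers. A direct computation on simple tensors gives $\Phi(c_\omega(A) \otimes B) \subseteq c_\omega(A \otimes B)$, so $\pi_\omega \circ \Phi$ annihilates $c_\omega(A) \otimes B$. Here exactness enters: as $B$ is exact, applying $- \otimes B$ to the short exact sequence $0 \to c_\omega(A) \to \ell^\infty(A) \to A_\omega \to 0$ yields the short exact sequence $0 \to c_\omega(A) \otimes B \to \ell^\infty(A) \otimes B \to A_\omega \otimes B \to 0$. Thus $A_\omega \otimes B$ is identified with $(\ell^\infty(A) \otimes B)/(c_\omega(A) \otimes B)$, and, writing $q$ for this quotient map, $\pi_\omega \circ \Phi$ factors through a $*$-homomorphism $\bar\Phi : A_\omega \otimes B \to (A \otimes B)_\omega$ with $\bar\Phi(\pi_\omega((a_n)_n) \otimes b) = \pi_\omega((a_n \otimes b)_n)$, which is the claimed map.

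Finally I would show $\bar\Phi$ is isometric, which I expect to be the main obstacle. It is automatically contractive, so only the lower bound is at issue. Fix $z \in A_\omega \otimes B$, a lift $x \in \ell^\infty(A) \otimes B$ with $q(x) = z$, and set $L := \lim_{n \to \omega} \|\theta_n(x)\| = \|\bar\Phi(z)\|$. Given $\varepsilon > 0$, the set $S := \{ n : \|\theta_n(x)\| \leq L + \varepsilon \}$ lies in $\omega$. Let $\mathbf{1}_{S^c} \in M(\ell^\infty(A))$ be the central projection supported off $S$; since $S^c \notin \omega$ one has $\mathbf{1}_{S^c}\, \ell^\infty(A) \subseteq c_\omega(A)$, so $k := (\mathbf{1}_{S^c} \otimes 1)x \in c_\omega(A) \otimes B = \ker q$. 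Then $\theta_n(x - k) = \theta_n(x)$ for $n \in S$ and $\theta_n(x-k) = 0$ otherwise, whence $\|z\| = \|q(x - k)\| \leq \|x - k\| = \sup_n \|\theta_n(x - k)\| \leq L + \varepsilon$. Letting $\varepsilon \to 0$ gives $\|z\| \leq \|\bar\Phi(z)\|$, so $\bar\Phi$ is an embedding. The delicate points are the identification of $c_\omega(A) \otimes B$ as the honest kernel (which genuinely requires exactness, the tensor functor not being exact in general) and the verification that cutting by the central projection $\mathbf{1}_{S^c}$ lands inside $c_\omega(A) \otimes B$.
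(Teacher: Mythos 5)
Your proof is correct, and while the setup is the same as the paper's, the key final step is genuinely different. Both arguments construct the isometry $\ell^\infty(A) \otimes B \hookrightarrow \ell^\infty(A \otimes B)$ (the paper does this by conjugating representations by the same unitary $(\xi_n)_n \otimes \eta \mapsto (\xi_n \otimes \eta)_n$ that underlies your direct-sum decomposition), and both invoke exactness of $B$ in exactly the same way: to identify the kernel of $\ell^\infty(A) \otimes B \twoheadrightarrow A_\omega \otimes B$ as precisely $c_\omega(A) \otimes B$, so that the map descends to $\bar\Phi$ on the quotient. Where you diverge is in proving $\bar\Phi$ injective. The paper argues qualitatively: $\ker(\bar\Phi)$ is an ideal, so by Kirchberg's Slice Lemma it would, if non-zero, contain a non-zero elementary tensor; but $\|\bar\Phi(a \otimes b)\| = \|a\|\|b\|$ for elementary tensors, a contradiction. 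You instead prove the stronger (though, for $*$-homomorphisms, equivalent) statement that $\bar\Phi$ is isometric, by a direct quantitative argument: the ultrapower norm of $\bar\Phi(z)$ is a limit along $\omega$ of the coordinate norms $\|\theta_n(x)\|$, and cutting the lift $x$ by the central projection $\mathbf{1}_{S^c} \otimes 1$ (which lands in $c_\omega(A) \otimes B$ because $S \in \omega$, as one checks on elementary tensors and extends by continuity since $c_\omega(A) \otimes B$ is closed) replaces the supremum over all coordinates by a supremum over $S$ only, reconciling the quotient norm with the limit along $\omega$. Your route is more elementary and self-contained --- it avoids the Slice Lemma entirely and is just the standard ultraproduct cutting trick --- at the cost of being somewhat longer; the paper's route is shorter but leans on a nontrivial structural result about ideals in minimal tensor products. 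Both are complete proofs.
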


\begin{proof}
Choose faithful representations $A \subseteq \mathbb{B}(\mathcal{H})$ and $B \subseteq \mathbb{B}(\mathcal{K})$.  Then there are induced embeddings
\[ \ell^\infty(A) \otimes B \hookrightarrow \mathbb{B}(\ell^2(H) \otimes \mathcal{K}) \quad \text{and} \quad
   \ell^\infty(A \otimes B) \hookrightarrow \mathbb{B}( \ell^2(\mathcal{H} \otimes \mathcal{K})). \]
Conjugating by the unitary
\[ \ell^2(H) \otimes K \rightarrow \ell^2(H \otimes K) \quad (\xi_n)_n \otimes \eta \mapsto (\xi_n \otimes \eta)_n \]
yields an embedding
\[ j: \ell^\infty(A) \otimes B \hookrightarrow \ell^\infty(A \otimes B). \]
Since $B$ is exact, we have a commutative diagram
\[ \begin{tikzcd}
   0 \arrow{r} & c_\omega(A) \otimes B \arrow{r} \arrow{d} & \ell^\infty(A) \otimes B \arrow{r} \arrow{d}{j} & A_\omega \otimes B \arrow{r} \arrow{d}{\bar{j}} & 0 \\
   0 \arrow{r} & c_\omega(A\otimes B) \arrow{r} & \ell^\infty(A \otimes B) \arrow{r} & (A \otimes B)_\omega \arrow{r} & 0
\end{tikzcd} \]
with exact rows.

The proof will be complete if we can show $\bar{j}$ is injective.  By Kirchberg's Slice Lemma \cite[Lemma 4.1.9]{RordamYellowBook}, if $\ker(\bar{j})$ is non-zero, then $\ker(\bar{j})$ contains a non-zero elementary tensor.  Given $a \in A_\omega$ and $b \in B$, let $(a_n)_n \in \ell^\infty(A)$ be a lift of $a$ and note that
\[ \| \bar{j}(a \otimes b) \| = \lim_{n \rightarrow \omega} \| a_n \otimes b \| = \| b \| \lim_{n \rightarrow \omega} \| a_n \| = \|a \| \| b\|. \]
Thus $\ker(\bar{j})$ contains no non-zero elementary tensors and hence $\bar{j}$ is injective.
\end{proof}

\begin{proposition}\label{prop:TensorProduct}
Let $A$ and $B$ be C*-algebras and suppose $B$ is exact.  If $A$ and $B$ are MF, then $A \otimes B$ is again MF.  Similarly, if $\tau_A$ and $\tau_B$ are MF traces on $A$ and $B$, then the trace $\tau_A \otimes \tau_B$ on $A \otimes B$ is also MF.
\end{proposition}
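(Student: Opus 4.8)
The plan is to handle both assertions with the same strategy: use that $A$ (resp.\ the trace $\tau_A$) is MF to land inside $\mathcal{Q}_\omega$, tensor with $B$, and then push the result into an ultrapower of $\mathcal{Q} \otimes B$ via Proposition \ref{prop:TensorsAndUltrapowers}, exploiting the exactness of $B$. The point is that this replaces the problematic ``$A$-factor'' (for which only a faithful $*$-linear approximation is available, and $*$-linear maps need not tensor continuously across the minimal norm) by the nuclear algebra $\mathcal{Q}$, where tensorial obstructions disappear.

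For the algebra statement, I would first use Proposition \ref{prop:permMF}(1) to fix a faithful morphism $\iota : A \hookrightarrow \mathcal{Q}_\omega$. Since the minimal tensor product preserves inclusions, $\iota \otimes \mathrm{id}_B : A \otimes B \hookrightarrow \mathcal{Q}_\omega \otimes B$ is again faithful, and composing with the embedding $\mathcal{Q}_\omega \otimes B \hookrightarrow (\mathcal{Q} \otimes B)_\omega$ of Proposition \ref{prop:TensorsAndUltrapowers} (valid as $B$ is exact) yields a faithful morphism $A \otimes B \hookrightarrow (\mathcal{Q} \otimes B)_\omega$. Now $\mathcal{Q}$ is nuclear and MF and $B$ is MF, so $\mathcal{Q} \otimes B$ is MF by \cite[Proposition 3.3.6]{BlackadarKirchberg}; hence $(\mathcal{Q} \otimes B)_\omega$ is MF by Proposition \ref{prop:permMF}(3), and $A \otimes B$, being a subalgebra, is MF.

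For the trace statement I would run the same chain with trace-preserving morphisms. Proposition \ref{prop:permMFTrace}(1) supplies a trace-preserving morphism $A \to \mathcal{Q}_\omega$; tensoring with $\mathrm{id}_B$ and composing with the map $\bar{j}$ of Proposition \ref{prop:TensorsAndUltrapowers} gives a morphism $\Theta : A \otimes B \to (\mathcal{Q} \otimes B)_\omega$ which is trace-preserving from $\tau_A \otimes \tau_B$ to $(\tau_{\mathcal{Q}} \otimes \tau_B)_\omega$, where $\tau_\mathcal{Q}$ is the unique trace on $\mathcal{Q}$; here one checks on elementary tensors that both the tensor map and $\bar{j}$ respect the relevant product traces, using $(\tau_{\mathcal{Q}})_\omega(\pi_\omega((q_n)_n)) = \lim_{n \to \omega} \tau_{\mathcal{Q}}(q_n)$. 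Granting that $\tau_{\mathcal{Q}} \otimes \tau_B$ is an MF trace on $\mathcal{Q} \otimes B$, Proposition \ref{prop:permMFTrace}(3) makes $(\tau_{\mathcal{Q}} \otimes \tau_B)_\omega$ MF, so by Proposition \ref{prop:permMFTrace}(1) there is a trace-preserving morphism $(\mathcal{Q} \otimes B)_\omega \to \mathcal{Q}_\omega$; precomposing with $\Theta$ and invoking Proposition \ref{prop:permMFTrace}(1) once more shows $\tau_A \otimes \tau_B$ is MF.

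It remains to prove the reduced claim that $\tau_{\mathcal{Q}} \otimes \tau_B$ is MF, and this is where the real difficulty of the MF setting sits. To get around the failure of $*$-linear maps to tensor continuously, I would route through a completely positive map on the $\mathcal{Q}$-factor. Writing $\mathcal{Q} = \overline{\bigcup_n \mathbb{M}_{k_n}}$ with unital inclusions, there are trace-preserving conditional expectations $E_n : \mathcal{Q} \to \mathbb{M}_{k_n}$, and $\tau_B$ MF provides a trace-preserving approximate morphism $\psi_m : B \to \mathbb{M}_{l_m}$. The composite
\[ \Psi_{n,m} := (\mathrm{id}_{\mathbb{M}_{k_n}} \otimes \psi_m) \circ (E_n \otimes \mathrm{id}_B) : \mathcal{Q} \otimes B \longrightarrow \mathbb{M}_{k_n} \otimes \mathbb{M}_{l_m} = \mathbb{M}_{k_n l_m} \]
is a genuine $*$-linear map on the minimal tensor product: the factor $E_n \otimes \mathrm{id}_B$ is completely positive and contractive, hence min-continuous, while the second factor is $\psi_m$ applied entrywise over the finite-dimensional $\mathbb{M}_{k_n}$. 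A direct estimate shows $\Psi_{n,m}$ is approximately multiplicative and that $\operatorname{tr}_{k_n l_m} \circ \Psi_{n,m}(a \otimes b) \to \tau_{\mathcal{Q}}(a)\,\tau_B(b)$ as $m \to \infty$, using that $E_n$ is trace-preserving and $\psi_m$ is trace-preserving and approximately multiplicative; a diagonal choice $m = m(n) \to \infty$ then produces a trace-preserving approximate morphism $\mathcal{Q} \otimes B \to \mathbb{M}_{k_n l_{m(n)}}$, so $\tau_{\mathcal{Q}} \otimes \tau_B$ is MF. The one delicate point is controlling the cross terms in the approximate multiplicativity estimate uniformly enough to make the diagonalization work; this is the step I expect to require the most care.
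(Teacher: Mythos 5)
Your treatment of the algebra statement is correct and is, step for step, the paper's own proof (embed $A \hookrightarrow \mathcal{Q}_\omega$, apply Proposition \ref{prop:TensorsAndUltrapowers} using exactness of $B$, quote \cite[Proposition 3.3.6]{BlackadarKirchberg} for $\mathcal{Q} \otimes B$, finish with Proposition \ref{prop:permMF}). You have also correctly identified where the paper's phrase ``the tracial version is nearly identical'' hides real content: the tracial chain needs the statement that $\tau_{\mathcal{Q}} \otimes \tau_B$ is an MF \emph{trace} on $\mathcal{Q} \otimes B$, and this cannot be cited from Blackadar--Kirchberg, whose result concerns MF algebras, not traces. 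One small slip in your reduction: Proposition \ref{prop:permMFTrace}(1) is stated for separable algebras, so it cannot be applied to $(\mathcal{Q}\otimes B)_\omega$ to produce a morphism $(\mathcal{Q}\otimes B)_\omega \to \mathcal{Q}_\omega$; but no such morphism is needed --- once the trace on $(\mathcal{Q}\otimes B)_\omega$ is known to be MF, precomposing its approximating $\ast$-linear maps with the genuine morphism $\Theta$ already shows $\tau_A \otimes \tau_B$ is MF, just as restriction to subalgebras works in the norm case.

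The genuine gap is in your proof of the reduced claim, and the ``delicate point'' you flag is fatal to that strategy rather than a matter of care. The maps $\psi_m$ supplied by an MF trace carry no norm control: nothing bounds $\|\psi_m(b)\|$ off the prescribed finite set, and even at a fixed $b$ one only gets the asymptotic bound $\limsup_m \|\psi_m(b)\| \leq \|b\|$ from the Connes--Higson argument. Hence $\mathrm{id}_{\mathbb{M}_{k_n}} \otimes \psi_m$ is unbounded, and the first-order error in your multiplicativity estimate, namely $\bigl\|(\mathrm{id}_{\mathbb{M}_{k_n}} \otimes \psi_m)\bigl(F_n(xy) - F_n(x)F_n(y)\bigr)\bigr\|$ with $F_n := E_n \otimes \mathrm{id}_B$, can only be bounded by roughly $k_n^2 \, \|F_n(xy) - F_n(x)F_n(y)\|$: the number of matrix entries grows like $k_n^2$ while the defect of $F_n$ tends to $0$ at a rate you cannot control, so no diagonal choice $m = m(n)$ converges. (Perturbing $\Omega$ into $\bigcup_n \mathbb{M}_{k_n} \odot B$ first does not help, since comparing $\Psi$ at nearby points again passes a small element through an unbounded map; note that the paper's own composition argument in Proposition \ref{prop:permMF}(2) survives precisely because \emph{faithful} approximate morphisms are approximately isometric on the defect elements, a tool with no tracial analogue.) The missing idea is to never apply approximate data \emph{after} anything: first convert the tracial approximation of $B$ into a genuine trace-preserving $\ast$-homomorphism $\beta : B \to \mathcal{Q}_\omega$ by Proposition \ref{prop:permMFTrace}(1). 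Then $\mathrm{id}_{\mathcal{Q}} \otimes \beta : \mathcal{Q} \otimes B \to \mathcal{Q} \otimes \mathcal{Q}_\omega$ is a bona fide (hence contractive) trace-preserving $\ast$-homomorphism, and composing with the trace-preserving embedding $\mathcal{Q}_\omega \otimes \mathcal{Q} \hookrightarrow (\mathcal{Q} \otimes \mathcal{Q})_\omega \cong \mathcal{Q}_\omega$ furnished by Proposition \ref{prop:TensorsAndUltrapowers} (applied with the exact algebra $\mathcal{Q}$) yields a trace-preserving morphism $\mathcal{Q} \otimes B \to \mathcal{Q}_\omega$; Proposition \ref{prop:permMFTrace}(1) then gives the reduced claim with no finite sets, conditional expectations, or diagonalization at all.
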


\begin{proof}
We only prove the norm version as the tracial version is nearly identical.  Since $A$ is MF, there is an embedding $A \hookrightarrow \mathcal{Q}_\omega$.  Since $B$ is exact, Proposition \ref{prop:TensorsAndUltrapowers} yields an embedding
\[ A \otimes B \hookrightarrow \mathcal{Q}_\omega \otimes B \hookrightarrow (Q \otimes B)_\omega. \]
As $\mathcal{Q}$ is nuclear, $\mathcal{Q} \otimes B$ is a MF by Proposition 3.3.6 of \cite{BlackadarKirchberg}. Proposition \ref{prop:permMF} now implies that $(Q \otimes B)_\omega$ is MF and the result follows since the MF property passes to subalgebras.
\end{proof}

\begin{remark}\label{rem:MFTensorProduct}
It is not clear if the exactness of $B$ is necessary in Proposition \ref{prop:TensorProduct}.  In general, it is not known if the minimal tensor product of two MF algebras is stably finite.
\end{remark}

\subsection{Crossed Products.}
In \cite{KerrNowak}, Kerr and Nowak introduced the notion of a quasidiagonal action of a group $G$ on a C$^*$-algebra $A$ and showed that if C$^*_\lambda(G)$ is MF and the action is quasidiagonal, then the reduced crossed product $A\rtimes_{\lambda}G$ is MF (Theorem 3.4 in \cite{KerrNowak}).  In \cite{Rainone}, the same result was proven using a weaker definition of a quasidiagonal action and a converse was obtained.  The notion of an MF action was introduced by the first author in \cite{Rainone}.  In full generality, it is not known if an MF action of a group $G$ whose reduced \cstar-algebra is MF yields an MF reduced  crossed product. Theorem \ref{thm:MFCrossedProducts} below provides a positive solution to this question in the case when the acting group is exact and also exhibits a tracial version of this result.

\begin{definition}\label{defn:MFaction}
Let $\alpha:G\rightarrow\Aut(A)$ be an action of a discrete group $G$ on a C$^*$-algebra $A$.  We say $\alpha$ is an \emph{MF-action} if, for all finite subsets $\Omega \subseteq A$ and $F \subseteq G$ and for all $\varepsilon >0$, there is an $n\in\mathbb{N}$, a $\ast$-linear map $\varphi : A \rightarrow \mathbb{M}_n$, and a function $w : G \rightarrow \mathcal{U}(\mathbb{M}_n)$ such that
\begin{enumerate}
  \item $\| w_{st} - w_s w_t \| < \varepsilon$,
  \item $\| w_s \varphi(a) w_s^* - \varphi(\alpha_s(a)) \| < \varepsilon$,
  \item $\| \varphi(ab) - \varphi(a) \varphi(b) \| < \varepsilon$, and
  \item $\left| \| \varphi(a) \| - \| a \| \right|< \varepsilon $
\end{enumerate}
for all $a, b \in \Omega$ and $s, t \in F$.

An invariant trace $\tau$ on $A$ is called \emph{$\alpha$-MF} if given $\Omega$, $F$, and $\varepsilon$ as above, there are $n$, $\varphi$, and $w$ as above satisfying (1), (2), (3), and
\begin{enumerate}
  \item[(4)'] $\left| \operatorname{tr}_n(\varphi(a)) - \tau(a)\right| < \varepsilon$ for all $a \in \Omega$.
\end{enumerate}
Quasidiagonal actions and $\alpha$-quasidiagonal traces are defined by further requiring $\varphi$ to be completely positive and contractive.
\end{definition}

As with MF algebras, note that when $A$ is separable and $G$ is countable, the action $\alpha$ is MF if and only if there is a faithful, covariant representation of $(A, G, \alpha)$ on $\mathcal{Q}_\omega$.  That is, there is an embedding $\varphi : A \rightarrow \mathcal{Q}_\omega$ and a unitary representation $w : G \rightarrow \mathcal{U}(\mathcal{Q}_\omega)$ with $\varphi \circ \alpha_s = \Ad_{w_s} \circ  \varphi$ for every $s \in G$. Similarly, an invariant trace $\tau$ on $A$ is MF if and only if there is a trace-preserving, covariant representation $\varphi: A \rightarrow \mathcal{Q}_\omega$.

\begin{theorem}\label{thm:MFCrossedProducts}
Suppose $(A,G,\alpha)$ is a C*-dynamical system with $G$ exact.
\begin{enumerate}
  \item If $\alpha$ is MF and $\mathrm{C}^*_\lambda(G)$ is MF, then $A \rtimes_{\lambda} G$ is MF.
  \item If $\tau$ is an $\alpha$-MF trace on $A$ and $\operatorname{tr}_G$ is MF, then the trace $\tau \circ \mathbb{E}$ on $A \rtimes_\lambda G$ is MF.
\end{enumerate}
\end{theorem}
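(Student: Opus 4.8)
The plan is to realize the reduced crossed product faithfully inside a tensor product of $\mathcal{Q}_\omega$ with $\mathrm{C}^*_\lambda(G)$, and then to invoke the tensor-product permanence already established. I would prove both parts at once by producing a $\ast$-homomorphism
\[ \rho : A \rtimes_\lambda G \longrightarrow \mathcal{Q}_\omega \otimes \mathrm{C}^*_\lambda(G) \]
which is faithful in the setting of part (1) and, in the setting of part (2), carries $\tau \circ \mathbb{E}$ to the product trace $\tau_\omega \otimes \operatorname{tr}_G$, where $\tau_\omega$ is the trace on $\mathcal{Q}_\omega$ induced by the unique trace on $\mathcal{Q}$. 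Since $G$ is exact, $\mathrm{C}^*_\lambda(G)$ is an exact $\mathrm{C}^*$-algebra, so once $\rho$ is in hand Proposition \ref{prop:TensorProduct} finishes the argument: in part (1) the hypothesis that $\mathrm{C}^*_\lambda(G)$ is MF makes $\mathcal{Q}_\omega \otimes \mathrm{C}^*_\lambda(G)$ MF, and $A \rtimes_\lambda G$ is MF as a subalgebra; in part (2) the hypothesis that $\operatorname{tr}_G$ is MF makes $\tau_\omega \otimes \operatorname{tr}_G$ an MF trace, and composing the trace-preserving $\rho$ with a trace-preserving morphism into $\mathcal{Q}_\omega$ (Proposition \ref{prop:permMFTrace}) exhibits $\tau \circ \mathbb{E}$ as MF.

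To construct $\rho$, I would use the covariant formulation of Definition \ref{defn:MFaction}: since $\alpha$ is an MF-action (resp.\ $\tau$ is an $\alpha$-MF trace), there is a faithful (resp.\ trace-preserving) covariant representation $(\varphi, w)$ of $(A,G,\alpha)$ into $\mathcal{Q}_\omega$, with $\varphi : A \to \mathcal{Q}_\omega$ a morphism and $w : G \to \mathcal{U}(\mathcal{Q}_\omega)$ a unitary representation satisfying $\varphi \circ \alpha_s = \Ad_{w_s} \circ \varphi$. Setting $\Phi(a) = \varphi(a) \otimes 1$ and $W_s = w_s \otimes \lambda_s$ in $\mathcal{Q}_\omega \otimes \mathrm{C}^*_\lambda(G)$, one checks directly that $W$ is a unitary representation and that $W_s \Phi(a) W_s^* = \Phi(\alpha_s(a))$, so $(\Phi, W)$ is a covariant pair and integrates to a morphism from the full crossed product with $\rho(a u_s) = \varphi(a) w_s \otimes \lambda_s$. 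The crux of the proof is that $\rho$ descends to the \emph{reduced} crossed product. This is Fell's absorption principle: representing $\mathcal{Q}_\omega$ faithfully on a Hilbert space $\mathcal{H}$ and $\mathrm{C}^*_\lambda(G)$ on $\ell^2(G)$ via the left regular representation, the pair $(\Phi, W)$ becomes $(\psi \otimes 1, v \otimes \lambda)$ on $\mathcal{H} \otimes \ell^2(G)$ for a representation $\psi$ of $A$; conjugating by the unitary $(U\xi)(t) = v_{t^{-1}} \xi(t)$ and using covariance carries this to the regular covariant representation induced from $\psi$, which always factors through the reduced norm and is isometric precisely when $\psi$ (equivalently $\varphi$) is faithful. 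Verifying this equivalence, and thereby that $\rho$ is defined on $A \rtimes_\lambda G$ (and faithful there in case (1)), is the step I expect to require the most care.

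It remains to identify the trace in part (2). Because $\operatorname{tr}_G(\lambda_s) = \delta_{s,e}$ and $\varphi$ is trace-preserving, on a generator $a u_s$ one computes
\[ (\tau_\omega \otimes \operatorname{tr}_G)\big(\rho(a u_s)\big) = \tau_\omega\big(\varphi(a) w_s\big)\, \operatorname{tr}_G(\lambda_s) = \delta_{s,e}\, \tau(a) = (\tau \circ \mathbb{E})(a u_s), \]
so $\rho$ is trace-preserving by density and continuity. For the left-hand factor one uses that $\tau_\omega$ is MF, being induced via Proposition \ref{prop:permMFTrace} from the unique trace on $\mathcal{Q}$, which is manifestly MF through the conditional expectations onto the matrix blocks. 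The only mild technical point is that $\mathcal{Q}_\omega$ is non-separable, so to apply Proposition \ref{prop:TensorProduct} one either restricts first to a separable MF subalgebra of $\mathcal{Q}_\omega$ containing $\varphi(A)$ and $\{ w_s : s \in G \}$, or unwinds that proposition directly by embedding $\mathcal{Q}_\omega \otimes \mathrm{C}^*_\lambda(G) \hookrightarrow (\mathcal{Q} \otimes \mathrm{C}^*_\lambda(G))_\omega$ via Proposition \ref{prop:TensorsAndUltrapowers} and using that $\mathcal{Q}$ is nuclear; either route keeps the argument entirely within the machinery already developed.
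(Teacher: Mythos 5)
Your proposal is correct and follows essentially the same route as the paper: both pass from the covariant pair $(\varphi, w)$ on $\mathcal{Q}_\omega$ to an embedding of $A \rtimes_\lambda G$ into $\mathcal{Q}_\omega \otimes \mathrm{C}^*_\lambda(G)$ (the paper phrases this as $A \rtimes_{\lambda,\alpha} G \rightarrow \mathcal{Q}_\omega \rtimes_{\lambda, \Ad w} G \cong \mathcal{Q}_\omega \otimes \mathrm{C}^*_\lambda(G)$, citing the inner-action isomorphism rather than running the Fell absorption argument by hand as you do), and then both conclude via exactness of $\mathrm{C}^*_\lambda(G)$ together with Propositions \ref{prop:TensorProduct}, \ref{prop:permMF}, and \ref{prop:permMFTrace}. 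Your explicit unitary $U$ and the trace computation are just the unwound versions of the facts the paper quotes, so the two arguments coincide in substance.
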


\begin{proof}
We may assume $A$ is separable and $G$ is countable. Suppose $\varphi : A \rightarrow \mathcal{Q}_\omega$ is a $\ast$-homomorphism and $w : G \rightarrow \mathcal{U}(\mathcal{Q}_\omega)$ is a unitary representation with $\varphi \circ \alpha_s = \Ad_{w_s} \circ \varphi$ for all $s \in G$.  Let $\beta:G\curvearrowright\mathcal{Q}_{\omega}$ denote the action given via conjugation; that is $\beta_s:=\Ad_{w_s}$ for $s\in G$. Since $\beta$ is inner we know that $\mathcal{Q}_{\omega}\rtimes_{\lambda,\beta}G\cong\mathcal{Q}_{\omega}\otimes \mathrm{C}^*_\lambda(G)$. We therefore have a $\ast$-homomorphism
\[ A \rtimes_{\lambda, \alpha} G \overset{\tilde{\varphi}}{\longrightarrow} \mathcal{Q}_\omega \rtimes_{\lambda,\beta} G \cong \mathcal{Q}_\omega \otimes \mathrm{C}^*_\lambda(G).\]
If $\alpha$ is MF we can choose such a $w$ and $\varphi$ as above such that $\varphi$ is faithful, which then implies $\tilde{\varphi}$ is an embedding. Since $G$ is exact, $\mathrm{C}^*_\lambda(G)$ is exact, and (1) follows immediately from Propositions~\ref{prop:TensorProduct} and~\ref{prop:permMF}. Similarly, if $\tau$ is an $\alpha$-MF trace on $A$, we can choose $\varphi$ to be trace-preserving and, consequently, $\tilde{\varphi}:A\rtimes_{\lambda}G\rightarrow\mathcal{Q}_{\omega}\otimes\mathrm{C}^*_\lambda(G)$ is trace-preserving as well. Now (2) follows from the fact that $\tau_{\mathcal{Q}_{\omega}}\otimes\operatorname{tr}_G$ is MF by Propositions~\ref{prop:permMFTrace} and~\ref{prop:TensorProduct}.
\end{proof}

\begin{remark}
The converse of both results in Theorem \ref{thm:MFCrossedProducts} are true without assuming C$^*_\lambda(G)$ is exact.  The converse of (1) is proved in \cite[Proposition 3.4]{Rainone} and the same proof yields the converse of (2). We again emphasize that the condition of $G$ being exact in Theorem \ref{thm:MFCrossedProducts} may be superfluous (compare with Remark \ref{rem:MFTensorProduct}).  As for when a reduced crossed product is quasidiagonal, Proposition 3.4 in~\cite{Rainone} shows that if $A\rtimes_\lambda G$ is quasidiagonal, then C$^*_\lambda(G)$ and the action are quasidiagonal. It is also known that these conditions are sufficient when $A$ is nuclear (Theorem 3.19 in \cite{Rainone}). It is likely that nuclearity of $A$ is not necessary.
\end{remark}

It is not clear when C$^*_\lambda(G)$ and $\operatorname{tr}_G$ are MF.  A recent breakthrough by Tikuisis, White, and Winter shows this is the case whenever $G$ is amenable (Corollary C in \cite{TikuisisWhiteWinter}).  In the non-amenable case, Haagerup and Thorbj{\o}rnsen have shown that C$^*_\lambda(\mathbb{F}_r)$ is MF in \cite{FreeGroupMF}.  In particular, C$^*_\lambda(\mathbb{F}_r)$ has an MF trace.  A result of Powers shows $\operatorname{tr}_{\mathbb{F}_r}$ is the unique trace on C$^*_\lambda(\mathbb{F}_r)$ when $r \geq 2$ (Corollary VII.7.6 in \cite{DavidsonBook}) and hence $\operatorname{tr}_{\mathbb{F}_r}$ is MF.  It is also known that C$^*_\lambda(\mathbb{F}_r)$ is exact (Proposition 5.1.8 in \cite{BrownOzawa}) and hence Theorem \ref{thm:MFCrossedProducts} applies when $G$ is free.  We record this corollary below for future use.

\begin{corollary}\label{cor:FreeGroupCrossedProducts}
Let $\alpha : \mathbb{F}_r\rightarrow\Aut(A)$ be a given action.
\begin{enumerate}
  \item $A \rtimes_{\lambda} \mathbb{F}_r$ is MF if and only if $\alpha$ is MF.
  \item Given an  invariant trace $\tau$ on $A$, the trace $\tau \circ \mathbb{E}$ on $A \rtimes_\lambda \mathbb{F}_r$ is MF if and only if $\tau$ is $\alpha$-MF.
\end{enumerate}
\end{corollary}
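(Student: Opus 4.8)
The plan is to obtain Corollary \ref{cor:FreeGroupCrossedProducts} as the specialization of Theorem \ref{thm:MFCrossedProducts} to the case $G = \mathbb{F}_r$, where every hypothesis on the acting group can be discharged using the facts assembled in the discussion above. Since $\mathbb{F}_r$ is exact (Proposition 5.1.8 in \cite{BrownOzawa}), both parts of Theorem \ref{thm:MFCrossedProducts} apply directly, so the forward implications of the corollary reduce to checking that $\mathrm{C}^*_\lambda(\mathbb{F}_r)$ is MF and that $\operatorname{tr}_{\mathbb{F}_r}$ is MF; the converse implications will come from the remark following the theorem, which asserts that the converses hold with no exactness or MF assumption on the group.

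For the forward direction of (1), I would invoke the Haagerup--Thorbj{\o}rnsen theorem \cite{FreeGroupMF} that $\mathrm{C}^*_\lambda(\mathbb{F}_r)$ is MF; combined with exactness of $\mathbb{F}_r$ and the hypothesis that $\alpha$ is MF, Theorem \ref{thm:MFCrossedProducts}(1) immediately yields that $A \rtimes_\lambda \mathbb{F}_r$ is MF. For the forward direction of (2), I first need $\operatorname{tr}_{\mathbb{F}_r}$ to be MF, and here I would split on $r$. When $r \geq 2$, Powers's theorem (Corollary VII.7.6 in \cite{DavidsonBook}) shows that $\operatorname{tr}_{\mathbb{F}_r}$ is the \emph{unique} trace on $\mathrm{C}^*_\lambda(\mathbb{F}_r)$; since a unital MF algebra always admits an MF trace and the trace here is unique, $\operatorname{tr}_{\mathbb{F}_r}$ is MF. When $r = 1$, the group $\mathbb{F}_1 = \mathbb{Z}$ is amenable, so $\operatorname{tr}_{\mathbb{Z}}$ is (quasidiagonal, hence) MF by Corollary C of \cite{TikuisisWhiteWinter}. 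With $\operatorname{tr}_{\mathbb{F}_r}$ MF in hand, the hypothesis that $\tau$ is $\alpha$-MF lets me apply Theorem \ref{thm:MFCrossedProducts}(2) to conclude that $\tau \circ \mathbb{E}$ is MF.

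For the converse implications I would appeal to the remark immediately after Theorem \ref{thm:MFCrossedProducts}: the statement that MF-ness of $A \rtimes_\lambda G$ forces the action to be MF is Proposition 3.4 of \cite{Rainone}, and the same argument gives the tracial converse, namely that MF-ness of $\tau \circ \mathbb{E}$ forces $\tau$ to be $\alpha$-MF. The essential point is that these converses require neither exactness nor an MF hypothesis on the group, so they hold verbatim for $\mathbb{F}_r$ and supply exactly the ``only if'' directions of parts (1) and (2).

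The only delicate point I anticipate is establishing that $\operatorname{tr}_{\mathbb{F}_r}$ is MF uniformly in $r$, since the justification genuinely differs between the non-amenable range $r \geq 2$ (where one leverages uniqueness of the trace together with the Haagerup--Thorbj{\o}rnsen result) and the amenable case $r = 1$ (where one invokes the Tikuisis--White--Winter theorem). Everything else is a direct citation of the preceding theorem and its accompanying remark.
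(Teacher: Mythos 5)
Your proposal is correct and follows essentially the same route as the paper: the paper likewise records this corollary as a direct specialization of Theorem \ref{thm:MFCrossedProducts}, citing exactness of $\mathbb{F}_r$, the Haagerup--Thorbj{\o}rnsen theorem for the MF property of $\mathrm{C}^*_\lambda(\mathbb{F}_r)$, Powers's uniqueness of the trace for $r \geq 2$ (combined with the fact that a unital MF algebra admits an MF trace) and the Tikuisis--White--Winter result for the amenable case, and the remark after the theorem (Proposition 3.4 of \cite{Rainone} and its tracial analogue) for the converse directions. There is nothing to add.
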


The following permanence results utilize Proposition~\ref{prop:TensorProduct} and Theorem \ref{thm:MFCrossedProducts} as well as a well-known imprimitivity theorem of P.\ Green. Recall that a discrete group $G$ is said to be locally finite if for every finite set $F\subset G$, the subgroup $\langle F \rangle$ generated by $F$ is also finite. Locally finite groups are easily seen to be amenable.

\begin{proposition}\label{prop:MFcrosslocallyfinite}
Let  $\alpha:G\rightarrow\Aut(A)$ be an action of a locally compact group $G$ on an MF algebra $A$.
\begin{enumerate}
  \item If $G$ is a second countable compact group, then $A\rtimes_{\lambda}G= A\rtimes G$ is MF.
  \item If $G$ is locally finite then $A\rtimes_{\lambda}G=A\rtimes G$ is MF.
\end{enumerate}
\end{proposition}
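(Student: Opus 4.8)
The plan is to treat the two parts separately, reducing each to a situation already covered by the permanence results above.

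For part (1), with $G$ a second countable compact group, I would invoke Green's imprimitivity theorem (in the Green--Julg form for compact groups) to realize the crossed product concretely inside a stabilization of $A$. Explicitly, for compact $G$ the action $\alpha \otimes \Ad\lambda$ of $G$ on $A \otimes \mathcal{K}(L^2(G))$, with $\lambda$ the left regular representation, has the property that
\[ A \rtimes_\lambda G \;\cong\; \big(A \otimes \mathcal{K}(L^2(G))\big)^{\alpha \otimes \Ad\lambda}, \]
the fixed-point subalgebra; one checks this on generators, and the case $A = \mathbb{C}$ recovers the Peter--Weyl decomposition $C^*_\lambda(G) \cong \big(\mathcal{K}(L^2(G))\big)^{\Ad\lambda}$. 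Since $\mathcal{K}(L^2(G))$ is AF, hence nuclear and MF, Proposition~\ref{prop:TensorProduct} gives that $A \otimes \mathcal{K}(L^2(G))$ is MF. As the MF property passes to $C^*$-subalgebras, the fixed-point algebra, and therefore $A \rtimes_\lambda G = A \rtimes G$, is MF. If one prefers to quote Green's theorem only up to Morita equivalence, it suffices to note that MF is a Morita invariant of separable $C^*$-algebras, since $B$ is MF if and only if $B \otimes \mathcal{K}$ is MF.

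For part (2), with $G$ locally finite, I would instead verify directly that $\alpha$ is an MF action and appeal to Theorem~\ref{thm:MFCrossedProducts}. Fix finite sets $\Omega \subseteq A$, $F \subseteq G$, and $\varepsilon > 0$, and let $F' = \langle F \rangle$, which is finite by local finiteness. Using that $A$ is MF, choose a unital $*$-linear map $\psi : A \to \mathbb{M}_m$ that is multiplicative and isometric to within $\varepsilon$ on the finite set $\bigcup_{t \in F'} \alpha_t(\Omega)$ together with the relevant products. Set
\[ \varphi \;=\; \bigoplus_{t \in F'} \psi \circ \alpha_t : A \longrightarrow \mathbb{M}_{m|F'|}, \]
and let $w_s \in \mathcal{U}(\mathbb{M}_{m|F'|})$ be the permutation unitary implementing right translation $t \mapsto ts^{-1}$ on the block coordinates indexed by $F'$. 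Then $s \mapsto w_s$ is an honest unitary representation of $F'$, and $w_s \varphi(a) w_s^* = \varphi(\alpha_s(a))$ holds \emph{exactly} for all $s \in F' \supseteq F$, while the approximate multiplicativity and isometry of $\varphi$ on $\Omega$ are inherited from $\psi$. This shows $\alpha$ is an MF action. Finally, $C^*_\lambda(G) = \overline{\bigcup_i C^*(F_i)}$ is an inductive limit of finite-dimensional algebras, hence AF and MF, and $G$ is exact (being amenable), so Theorem~\ref{thm:MFCrossedProducts} yields that $A \rtimes_\lambda G = A \rtimes G$ is MF.

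The routine bookkeeping aside, the one genuinely external input is the compact case: the main obstacle is correctly invoking Green's imprimitivity theorem to exhibit $A \rtimes_\lambda G$ as a subalgebra, up to Morita equivalence, of the MF algebra $A \otimes \mathcal{K}(L^2(G))$. For the locally finite case the only subtlety is the averaging construction, which produces exact covariance on the prescribed finite set $F$ by passing to the finite subgroup it generates; everything else is a direct application of Theorem~\ref{thm:MFCrossedProducts} together with the AF structure of $C^*_\lambda(G)$.
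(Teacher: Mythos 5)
Your proof is correct, but both halves take genuinely different routes from the paper's. For part (1), the paper does not use the fixed\-/point picture: it embeds $A\rtimes_{\alpha}G$ into $(A\otimes C(G))\rtimes_{\alpha\otimes\beta}G$ via the equivariant inclusion $a\mapsto a\otimes 1_{C(G)}$ (with $\beta$ the left\-/translation action on $C(G)$), identifies the larger crossed product with $A\otimes\mathbb{K}(L^2(G))$ by Green's imprimitivity theorem (Corollary 2.8 of \cite{Green}), and finishes exactly as you do via Proposition~\ref{prop:TensorProduct} and passage to subalgebras. Your identification $A\rtimes G\cong\bigl(A\otimes\mathcal{K}(L^2(G))\bigr)^{\alpha\otimes\Ad\lambda}$ is a true and standard stabilization theorem for compact group actions, so there is no gap, but two caveats are worth recording: the name ``Green--Julg'' ordinarily refers to the K\-/theoretic isomorphism $\mathrm{K}_*^G(A)\cong\mathrm{K}_*(A\rtimes G)$ rather than to this fixed\-/point realization; and surjectivity of the covariant copy of $A\rtimes G$ onto the fixed\-/point algebra is not just a check ``on generators'' (one averages kernels using the canonical conditional expectation onto the fixed\-/point algebra, and with $\Ad\lambda$ the natural covariant pair uses the \emph{right} regular representation, the $\Ad\lambda$- and $\Ad\rho$-fixed algebras being conjugate via the inversion unitary). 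The paper's choice is softer precisely because it only needs an injective map into the stabilization, not an exact identification of the range. For part (2) the difference is more substantive: the paper applies part (1) to the finite subgroup $H=\langle F\rangle$ and then invokes the \emph{converse} of Theorem~\ref{thm:MFCrossedProducts}(1) (Proposition 3.4 of \cite{Rainone}: an MF crossed product forces an MF action) to extract the approximately covariant maps, whereas you construct them directly by the induced\-/representation trick $\varphi=\bigoplus_{t\in\langle F\rangle}\psi\circ\alpha_t$ with permutation unitaries implementing right translation. Your route is more elementary and self\-/contained: it makes part (2) independent of part (1) and of the Rainone converse, and it yields \emph{exact} covariance and an exact unitary representation of $\langle F\rangle$ rather than approximate ones; the only bookkeeping omitted is extending $w$ arbitrarily from $\langle F\rangle$ to all of $G$, as Definition~\ref{defn:MFaction} formally requires (the paper does this explicitly). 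Both arguments then conclude identically: $\mathrm{C}^*_\lambda(G)$ is AF, $G$ is exact, and Theorem~\ref{thm:MFCrossedProducts} applies.
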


\begin{proof}
The fact that $G$ is amenable in both (1) and (2) ensures that $A\rtimes_{\lambda}G=A\rtimes G$.


(1): Write $\beta$ for the action of $G$ on $C(G)$ induced by left translation; that is, $\beta_{s}(f)(t)=f(s^{-1}t)$ for $s,t\in G$ and $f\in C(G)$. Now consider the action $\alpha\otimes\beta:G\rightarrow \Aut(A\otimes C(G))$ given by $(\alpha\otimes\beta)_{s}=\alpha_s\otimes\beta_s$ for $s\in G$. The embedding $A \hookrightarrow A \otimes C(G)$ given by $a \mapsto a \otimes 1_{C(G)}$ induces an embedding.
\[A\rtimes_{\alpha}G\hookrightarrow (A\otimes C(G))\rtimes_{\alpha\otimes\beta}G.\]
From Corollary 2.8 of~\cite{Green} we get that $(A\otimes C(G))\rtimes_{\alpha\otimes\beta}G\cong A\otimes\mathbb{K}(L^2(G))$, where $\mathbb{K}(L^2(G))$ denotes the compact operators on $L^2(G)$. Since $A$ and $\mathbb{K}(L^2(G))$ are MF algebras and $\mathbb{K}(L^2(G))$ is nuclear  we know that $A\otimes\mathbb{K}(L^2(G))$ is also MF by Proposition~\ref{prop:TensorProduct}. The MF property passes to subalgebras so $A\rtimes_{\alpha}G$ is MF.

(2): Now consider a locally finite group $G$. Let $F\subset G$ and $\Omega\subset A$ be finite sets and let $\varepsilon>0$. Denote by $H$ the (finite) group generated by $F$ and by $\alpha^{H}$ the restricted action of $H$ on $A$. By part (1), that $A\rtimes_{\alpha^{H}}H$ is MF. The action $\alpha^{H}$, therefore, is MF, so there are $d \in \mathbb{N}$, a $\ast$-linear map $\phi: A\rightarrow\mathbb{M}_{d}$ and a map $w:H\rightarrow\mathcal{U}(\mathbb{M}_{d})$ with
\begin{enumerate}
\item $\|\phi(xy)-\phi(x)\phi(y)\|<\varepsilon$,
\item $\big|\|\phi(x)\|-\|x\|\big|<\varepsilon$,
\item $\|\phi(\alpha^{H}_{s}(x))-w_s\phi(x)w_s^*\|<\varepsilon$, and
\item $\|w_{st}-w_sw_t\|<\varepsilon$
\end{enumerate}
for every $x,y\in\Omega$ and $s,t\in F$. By extending the map $w$ arbitrarily to all of $G$, we conclude that the action $\alpha:G\curvearrowright A$ is MF. Now, $G$ is exact (actually amenable) and C$_{\lambda}^*(G)$ is MF (actually AF), and so, by Theorem~\ref{thm:MFCrossedProducts}, the crossed product $A\rtimes G$ is MF.
\end{proof}

\subsection{Quotients, Extensions, and Mayer-Vietoris.}
Arbitrary quotients of MF algebras need not be MF. Indeed, every separable, unital \cstar-algebra is a quotient of the residually finite dimensional algebra $\mathrm{C}^*(\mathbb{F}_{\infty})$. Similarly, extensions of MF algebras need not be MF. For example, the Toeplitz algebra $\mathcal{T}$ is an extension of $C(\mathbb{T})$ by the compact operators $\mathbb{K}$.  Both $\mathbb{K}$ and $C(\mathbb{T})$ are QD, hence MF, but $\mathcal{T}$ is not finite and so cannot be MF.  However, with a certain assumption on the extension, we show the MF property is a three-space property. We recall the following definition.

\begin{definition}\label{Quotients}
An exact sequence $0\rightarrow I\rightarrow A\rightarrow B\rightarrow0$ of C$^*$-algebras. The sequence is called \emph{quasidiagonal} if $I$ admits an approximate identity consisting of projections which is quasicentral in $A$.
\end{definition}

\begin{proposition}
If  the sequence $\xymatrix{0\ar[r]&I\ar[r]^\iota &A\ar[r]^\pi& B\ar[r]&0}$ is quasidiagonal and $A$ is MF, then $B$ is MF.
\end{proposition}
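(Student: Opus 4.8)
The plan is to compress a matricial-field approximation of $A$ by a quasicentral projection coming from the ideal $I$: such a compression is approximately multiplicative, kills $I$ up to small error, and therefore descends the approximation to $B=A/I$. Since the MF property is checked on separable subalgebras and via $\ast$-linear maps into matrix algebras on finite sets (Definition \ref{DefnMF}), I would first reduce to the case where $A$ (hence $I$ and $B$) is separable, extracting from the given quasicentral approximate identity of projections a sequence $(e_\lambda)$ in $I$ that is still an approximate identity for $I$ and quasicentral in $A$. Fix a $\ast$-linear section $s\colon B\to A$ of $\pi$ (a vector-space splitting, symmetrised to be $\ast$-preserving; no boundedness is needed). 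For a projection $e\in I$ write $T_e(a)=a-ea-ae+eae\in A$ for compression by $1-e$. The map I ultimately feed through $s$ is $a\mapsto\varphi(T_e(a))$, where $\varphi\colon A\to\mathbb{M}_m$ is an MF approximation of $A$ chosen after $e$ is fixed.

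Given a finite set $\Omega\subseteq B$ and $\varepsilon>0$, the key selection step uses the standard approximate-unit formula for the quotient norm: for every $a\in A$ one has $\lim_\lambda\|T_{e_\lambda}(a)\|=\|\pi(a)\|$, and for every $x\in I$ one has $\|T_{e_\lambda}(x)\|\le\|(1-e_\lambda)x\|\to 0$. Because only finitely many conditions are involved, I can choose a single projection $e=e_\lambda\in I$ that is sufficiently quasicentral on the relevant finite subset of $A$, absorbs the ideal discrepancies $x_{b,b'}:=s(b)s(b')-s(bb')\in I$ (i.e.\ $\|T_e(x_{b,b'})\|$ is tiny), and satisfies $\big|\,\|T_e(s(b))\|-\|b\|\,\big|<\varepsilon/3$ for all $b\in\Omega$. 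With $e$ fixed, invoke the MF property of $A$ to get a $\ast$-linear, $\delta$-multiplicative, $\delta$-isometric map $\varphi\colon A\to\mathbb{M}_m$ on a finite set $\Omega_A$ containing $e$, the lifts $s(b)$, the elements $T_e(s(b))$, the products $T_e(s(b))T_e(s(b'))$, and the $x_{b,b'}$. Since $e$ is a projection and $\varphi$ is $\delta$-multiplicative, $\varphi(e)$ is an approximate projection, so continuous functional calculus yields a genuine projection $f\in\mathbb{M}_m$ with $\|f-\varphi(e)\|$ small. I then define the $\ast$-linear map
\[ \psi\colon B\to\mathbb{M}_m,\qquad \psi(b)=(1-f)\,\varphi(s(b))\,(1-f), \]
and note that two applications of $\delta$-multiplicativity together with $f\approx\varphi(e)$ give $\psi(b)\approx\varphi(T_e(s(b)))$.

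The verification then splits into two parts. For multiplicativity, the identity $\psi(b)\approx\varphi(T_e(s(b)))$ reduces matters to showing $\varphi(T_e(s(b)))\varphi(T_e(s(b')))\approx\psi(bb')$; here $\delta$-multiplicativity of $\varphi$ and the routine ``compression by a quasicentral projection'' estimate $\|T_e(a)T_e(a')-T_e(aa')\|\le\|a\|\,\|ea'-a'e\|$ bring this to $\varphi(T_e(s(b)s(b')))$, and since $s(b)s(b')=s(bb')+x_{b,b'}$ with $\|T_e(x_{b,b'})\|$ tiny, this is $\approx\varphi(T_e(s(bb')))\approx\psi(bb')$. For the isometry estimate I would use $\|\psi(b)\|\approx\|\varphi(T_e(s(b)))\|\approx\|T_e(s(b))\|\approx\|b\|$, where the middle step is $\delta$-isometry of $\varphi$ and the last is the approximate-unit choice of $e$. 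I expect the norm (isometry) estimate to be the main obstacle: it is exactly where the quasidiagonality hypothesis is indispensable, since one must certify that the compression neither gains norm (automatic, as $T_e$ and $\varphi$ are contractive up to $\delta$) nor loses it below $\|b\|$, and the latter hinges on $\pi$ being norm-decreasing together with the formula $\lim_\lambda\|T_{e_\lambda}(a)\|=\|\pi(a)\|$ for an approximate identity of projections in $I$. The approximate multiplicativity, by contrast, is the standard quasicentral-compression computation once $e$ has been chosen to absorb the discrepancies $x_{b,b'}$.
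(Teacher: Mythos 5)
Your proposal is correct and follows essentially the same route as the paper's proof: a $\ast$-linear splitting of $\pi$, compression by complements of the quasicentral approximate identity of projections in $I$, the quotient-norm formula $\lim_\lambda\|(1-e_\lambda)a(1-e_\lambda)\|=\|\pi(a)\|$ to certify faithfulness, and quasicentrality together with the fact that the splitting's multiplicative discrepancies lie in $I$ to get approximate multiplicativity. The only difference is organizational: the paper packages the compressed maps as a faithful approximate morphism $B\to A$ and then cites Proposition \ref{prop:permMF}(2) to compose with MF approximations of $A$, whereas you carry out that composition into matrix algebras by hand (your functional-calculus step replacing $\varphi(e)$ by a genuine projection $f$ is harmless but not needed, since one could work directly with $\varphi(T_e(s(b)))$).
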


\begin{proof}
If $\tilde{\pi} : \tilde{A} \rightarrow \tilde{B}$ denotes the unitization of $\pi$, then $\tilde{A}$ is MF and
\[ \begin{tikzcd} 0 \arrow{r} & I \arrow{r}{\iota} & \tilde{A} \arrow{r}{\tilde{\pi}} & \tilde{B}  \arrow{r} & 0 \end{tikzcd} \]
is a quasidiagonal extension.  Hence we may assume $A$, $B$, and $\pi$ are unital.

Let $\varphi:B\rightarrow A$ be a linear splitting for the exact sequence; that is, $\varphi\circ\pi=\id_B$. We can always arrange $\varphi$ to be $\ast$-linear by replacing $\varphi$ with $1/2(\varphi+\varphi^*)$, where $\varphi^*(b)=\varphi(b^*)^*$.  Let $(p_n)_n$ be an approximate identity for $I$ which is quasicentral for $A$ such that each $p_n$ is a projection. Write $q_n=1_A-p_n$ and define
\[\varphi_n: B\rightarrow A,\qquad \varphi_n(b)=q_n\varphi(b)q_n.\]
Note that for $b\in B$, since $\varphi$ is a splitting, we have
\[\|b\|=\|\pi(\varphi(b))\|=\lim_{n\rightarrow\infty}\|q_n\varphi(b)q_n\|=\lim_{n\rightarrow\infty}\|\varphi_n(b)\|.\]
Since $(p_n)_n$ is quasicentral for $A$,
\[\|aq_n-q_na\|=\|p_na-ap_n\| \rightarrow 0 \]
as $n \rightarrow \infty$ for every $a \in A$. A little work shows that
\[\|\varphi_n(xy)-\varphi_n(x)\varphi_n(y)\|\stackrel{n\rightarrow\infty}{\longrightarrow}0\]
for $x,y\in B$.  Since $A$ is MF, it follows that $B$ is MF.
\end{proof}

Next, we consider extensions. Recall the following elementary fact: if $(T_n)_n \subset\mathbb{B}(\mathcal{H})$ is a net of operators converging to an operator $T$ in the strong operator topology, then $\|T\|\leq\liminf_{n\rightarrow\infty}\|T_n\|$. Moreover, if $\lim_{n\rightarrow\infty}\|T_n\|$ exists, then $\|T\|=\lim_{n\rightarrow\infty}\|T_n\|$.

\begin{proposition}\label{Extensions}
If $\xymatrix{0\ar[r]&I\ar[r]^\iota &A\ar[r]^\pi& B\ar[r]&0}$ is a quasidiagonal sequence with $I$ and $B$ MF algebras, then $A$ is also MF.
\end{proposition}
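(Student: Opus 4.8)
The plan is to build a faithful approximate morphism $A \to \mathbb{M}_{r+s}$ by splitting each element of $A$ into an ``ideal part'' and a ``quotient part'' using the quasicentral approximate unit of projections. Write $(p_n)_n$ for the approximate unit of projections for $I$ that is quasicentral in $A$, set $q_n = 1 - p_n$, and work in the unitization of $A$ if necessary (the unitized sequence remains quasidiagonal, since $[p_n, a + \lambda 1] = [p_n, a]$). Quasicentrality gives $\|p_n a - a p_n\| \to 0$, whence $\|p_n a q_n\|, \|q_n a p_n\| \to 0$ and therefore $\|a - (p_n a p_n + q_n a q_n)\| \to 0$ for every $a \in A$. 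Since $p_n \in I$, the corner $p_n a p_n$ lies in $I$, while $\pi(q_n a q_n) = \pi(a)$; thus $a$ is approximately the orthogonal sum of an element of $I$ and a lift of $\pi(a)$.

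Given a finite set $\Omega \subseteq A$ and $\varepsilon > 0$, I would first fix $n$ large enough that the cross terms $p_n a q_n, q_n a p_n$ are small, that $\|a - (p_n a p_n + q_n a q_n)\| < \delta$, and that $\big| \|q_n a q_n\| - \|\pi(a)\| \big| < \delta$ for all $a \in \Omega$, where $\delta$ is a small tolerance to be calibrated against $\varepsilon$. The last estimate is the standard fact that $\|q_n a q_n\| \to \|\pi(a)\|$ for a quasicentral approximate unit: the inequality $\|q_n a q_n\| \geq \|\pi(a)\|$ is immediate since $\pi(q_n) = 1$, while the reverse follows by approximating $a$ with $a - j$ for $j \in I$ and using $q_n j q_n \to 0$. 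Crucially, fixing $n$ first freezes the finite subsets $\Omega_I := \{ p_n a p_n : a \in \Omega \} \cup \{ p_n a q_n b p_n : a, b \in \Omega \} \subseteq I$ and $\pi(\Omega) \subseteq B$. Since $I$ and $B$ are MF, I can then choose $\ast$-linear maps $\mu : I \to \mathbb{M}_r$ and $\nu : B \to \mathbb{M}_s$ that are approximately multiplicative and approximately isometric, within $\delta$, on $\Omega_I$ and $\pi(\Omega)$ respectively, and define $\varphi : A \to \mathbb{M}_r \oplus \mathbb{M}_s \subseteq \mathbb{M}_{r+s}$ by $\varphi(a) = \mu(p_n a p_n) \oplus \nu(\pi(a))$, which is manifestly $\ast$-linear.

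For approximate multiplicativity, in the second coordinate $\nu(\pi(ab)) = \nu(\pi(a)\pi(b))$ is within $\delta$ of $\nu(\pi(a))\nu(\pi(b))$; in the first, $\mu(p_n ab p_n) - \mu(p_n a p_n)\mu(p_n b p_n)$ splits as $\mu(p_n a q_n b p_n)$ plus the multiplicativity defect of $\mu$ on $(p_n a p_n)(p_n b p_n)$, and both are small — the former because $p_n a q_n b p_n \in \Omega_I$ has small norm and $\mu$ is approximately isometric there, the latter by approximate multiplicativity of $\mu$ on $\Omega_I$. For approximate isometry, $\|\varphi(a)\| = \max(\|\mu(p_n a p_n)\|, \|\nu(\pi(a))\|)$ is within $\delta$ of $\max(\|p_n a p_n\|, \|\pi(a)\|)$; since the corners $p_n A p_n$ and $q_n A q_n$ are orthogonal, $\|p_n a p_n + q_n a q_n\| = \max(\|p_n a p_n\|, \|q_n a q_n\|)$, and combining this with $\|q_n a q_n\| \approx \|\pi(a)\|$ and $a \approx p_n a p_n + q_n a q_n$ yields $\max(\|p_n a p_n\|, \|\pi(a)\|) \approx \|a\|$. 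Calibrating $\delta$ against $\varepsilon$ then delivers the two MF estimates of Definition \ref{DefnMF}.

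The main obstacle to watch is that $\ast$-linear MF approximations need not be contractive, so one cannot naively assert that $\mu$ sends the small multiplicativity defects $p_n a q_n b p_n$ to small elements. The remedy — and the reason the argument is arranged with $n$ chosen before $\mu$ and $\nu$ — is that these defects depend only on $n$ and $\Omega$, so once $n$ is fixed they form a finite subset of $I$ on which $\mu$ can be required to be approximately isometric, forcing $\mu$ of each defect to be small. This interleaving of the two independent approximation scales (the quasicentrality scale $n$ and the MF matrix sizes $r, s$) is the only delicate point; the remaining estimates are routine.
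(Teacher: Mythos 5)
Your proof is correct, and its core is the same as the paper's: both arguments send $a$ to the pair $(p_n a p_n, \pi(a))$ and exploit quasicentrality to make this approximately multiplicative and approximately isometric. The differences are in the packaging and in one key estimate. The paper stops at the level of the approximate morphism $\varphi_n : A \to I \oplus B$, $\varphi_n(a) = (p_n a p_n, \pi(a))$, and then cites Proposition \ref{prop:permMF}(2) to compose with matrix approximations of the MF algebra $I \oplus B$; you inline that composition, and your ``main obstacle'' discussion (enlarging the finite subset of $I$ to contain the defects $p_n a q_n b p_n$ so that approximate isometry of $\mu$ controls their images) is exactly the device used in the proof of Proposition \ref{prop:permMF}(2), where the set $\Omega'$ is enlarged to contain the multiplicativity defects. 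The genuinely different step is faithfulness: the paper passes to the bidual, uses that the quasicentral projections $p_n$ converge strongly to a central projection $P \in \mathcal{Z}(A^{**})$, and reads off $\|a\| = \max\{\|PaP\|, \|P^{\perp}aP^{\perp}\|\}$ from the embedding $A \hookrightarrow I^{**} \oplus B^{**}$, together with lower semicontinuity of the norm under strong convergence; you instead argue entirely inside $A$ (or its unitization), using $\|a - (p_n a p_n + q_n a q_n)\| \to 0$, the exact norm identity $\|x + y\| = \max\{\|x\|,\|y\|\}$ for elements of orthogonal corners, and the standard fact $\|q_n a q_n\| \to \|\pi(a)\|$. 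Your route is more elementary and self-contained (no von Neumann algebra input), and it yields the cleaner statement $\lim_n \max\{\|p_n a p_n\|, \|\pi(a)\|\} = \|a\|$ rather than the paper's $\liminf$ formulation; the paper's bidual argument is shorter if one takes the structure of $A^{**}$ for granted. Both establish exactly what Definition \ref{DefnMF} requires.
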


\begin{proof}
Identify $I$ with $\iota(I) $ and let $(p_n)_n$ be an approximate identity for $I$ which is quasicentral for $A$ such that each $p_n$ is a projection. The maps $\varphi_n:A\rightarrow I\oplus B$ given by $\varphi_n(a)=(p_nap_n,\pi(a))$ form an approximate morphism. Indeed,
\[\|p_nxyp_n-p_nxp_nyp_n\|\leq\|p_nx\|\|yp_n-p_ny\|\|p_n\|\leq\|x\|\|yp_n-p_ny\|\longrightarrow0.\]
Now we verify the approximate morphism $(\varphi_n)_n$ is faithful.  Set $\lambda=\liminf_{n\rightarrow\infty}\|p_nap_n\|$ and consider $I^{**}\subset A^{**}$. The fact that  $(p_n)_n$ is a quasicentral implies that the projections $p_n$ converge strongly to a central projection $P\in\mathcal{Z}(A^{**})$. Thus the map
\[A\hookrightarrow I^{**}\oplus B^{**}, \qquad a\mapsto (PaP, P^{\perp}aP^{\perp})\]
is a $\ast$-monomorphism which implies $\|a\|=\max\{\|PaP\|,\|P^{\perp}aP^{\perp}\|\}$. Using the remark stated before the proposition and the fact the the $(p_n)_n$ form an approximate unit for $I = \ker(\pi)$, we compute
\[\max\{\lambda, \|\pi(a)\|\}\geq\|\pi(a)\|=\lim_{n\rightarrow\infty}\|p_n^\perp a p_n^\perp \|=\|P^{\perp}aP^{\perp}\|,\]
and
\[\max\{\lambda, \|\pi(a)\|\}\geq \lambda\geq\|PaP\|.\]
Therefore, $\|a\|\leq\max\{\lambda, \|\pi(a)\|\}\leq\|a\|$, and the $\varphi_n$ are approximately isometric. Now $I\oplus B$ is MF, and $(\varphi_n)_n$ is a faithful approximate morphism from $A$ into $I \oplus B$.  Hence $A$ is MF by Proposition \ref{prop:permMF}.
\end{proof}

The MF property clearly passes to subalgebras, so a version of the three-space property now follows from Propositions~\ref{Quotients} and~\ref{Extensions}.

\begin{corollary}\label{ThreeSpace}
If  $\xymatrix{0\ar[r]&I\ar[r]^\iota &A\ar[r]^\pi& B\ar[r]&0}$ is a quasidiagonal sequence, then $A$ is MF if and only if $I$ and $B$ are MF.
\end{corollary}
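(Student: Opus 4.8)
The plan is to treat this corollary as a direct assembly of the two preceding propositions together with the elementary fact---recorded at the start of this section---that the MF property passes to subalgebras. I would prove each implication of the biconditional separately, and I do not expect either half to require any new ideas.

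First I would handle the forward direction, assuming $A$ is MF. For the ideal, observe that $\iota(I)$ is a closed, two-sided subalgebra of $A$; since subalgebras of MF algebras are again MF, it follows immediately that $I \cong \iota(I)$ is MF, with no use of the quasidiagonality hypothesis at all. For the quotient $B$, the assumption that the sequence is quasidiagonal is precisely what licenses an application of Proposition~\ref{Quotients}, which gives that $B$ is MF directly. Thus the two halves of the forward implication draw on different tools---subalgebra permanence for $I$ and the quotient proposition for $B$---rather than a single uniform argument.

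The reverse direction---assuming $I$ and $B$ are MF and concluding that $A$ is MF---is exactly the statement of Proposition~\ref{Extensions}, so nothing beyond citing it is needed. Consequently there is no real obstacle to overcome: the corollary is purely a bookkeeping combination of results already established, and the only point worth flagging is that one must verify at the outset that the extension in question is quasidiagonal, since both Proposition~\ref{Quotients} and Proposition~\ref{Extensions} take this as a standing hypothesis.
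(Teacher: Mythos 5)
Your proposal is correct and matches the paper's proof exactly: the paper likewise obtains $I$ MF from subalgebra permanence, $B$ MF from the quotient proposition (cited there as Proposition~\ref{Quotients}), and the converse directly from Proposition~\ref{Extensions}. The only cosmetic remark is that your final caveat about verifying quasidiagonality is vacuous, since that is already the hypothesis of the corollary.
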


The following proposition shows that under certain restrictions the MF property satisfies a Mayer-Vietoris-type condition.

\begin{proposition}\label{MV}
Let $A$ be a C*-algebra with ideals $I$ and $J$ such that $A=I+J$. Suppose the sequence $0\rightarrow I\cap J \rightarrow A\rightarrow A\big/(I\cap J)\rightarrow 0$ is quasidiagonal. If $I$ and $J$ are MF, then $A$ is also MF.
\end{proposition}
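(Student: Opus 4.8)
The plan is to reduce everything to the three-space property established in Corollary~\ref{ThreeSpace}. Applying that corollary to the given quasidiagonal sequence $0 \to I \cap J \to A \to A/(I\cap J) \to 0$, it suffices to show that both $I \cap J$ and the quotient $A/(I\cap J)$ are MF. The first of these is immediate: $I \cap J$ is a closed, two-sided ideal of $I$, hence a C*-subalgebra of the MF algebra $I$, and the MF property passes to subalgebras.

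The crux is therefore the quotient $A/(I\cap J)$, and here I would exploit the comaximality hypothesis $A = I + J$. Since $I + J = A$, the second isomorphism theorem provides a $\ast$-isomorphism
\[ A/(I\cap J) \;\cong\; A/I \,\oplus\, A/J, \]
under which $A/I \cong J/(I\cap J)$ and $A/J \cong I/(I\cap J)$. Concretely, the map $a \mapsto (a+I,\, a+J)$ has kernel $I\cap J$ and is surjective, since any difference $a - b \in A = I + J$ can be split as $i + j$ with $i \in I$ and $j \in J$. It thus remains to prove that the two quotients $J/(I\cap J)$ and $I/(I\cap J)$ are MF.

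To this end I would restrict the quasidiagonal structure. Let $(p_n)_n$ be an approximate identity of projections for $I \cap J$ that is quasicentral in $A$, as furnished by the hypothesis that the given sequence is quasidiagonal. Since $J \subseteq A$, the same sequence $(p_n)_n$ is quasicentral in $J$ and remains an approximate identity of projections for the ideal $I \cap J \subseteq J$; hence
\[ 0 \to I\cap J \to J \to J/(I\cap J) \to 0 \]
is again a quasidiagonal sequence. As $J$ is MF, Corollary~\ref{ThreeSpace} forces $J/(I\cap J)$ to be MF, and symmetrically $I/(I\cap J)$ is MF. A direct sum of two MF algebras is MF (amalgamate faithful approximate morphisms into complementary matrix blocks), so $A/(I\cap J)$ is MF, completing the reduction from the first paragraph.

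The main obstacle is precisely the quotient $A/(I\cap J)$: as noted in the discussion preceding Definition~\ref{Quotients}, quotients of MF algebras are generally not MF, so one cannot argue naively. The resolution is the Mayer--Vietoris decomposition $A/(I\cap J) \cong A/I \oplus A/J$ combined with the observation that the relevant sub-extensions inherit quasidiagonality from the given one, which is exactly what lets Corollary~\ref{ThreeSpace} finish the argument.
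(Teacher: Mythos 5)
Your proposal is correct and follows essentially the same route as the paper's own proof: both exploit $A = I + J$ to identify $A/(I\cap J)$ with $I/(I\cap J) \oplus J/(I\cap J)$, note that the quasicentral approximate identity of projections for $I \cap J$ makes the sub-extensions over $I$ and $J$ quasidiagonal, and then apply Corollary~\ref{ThreeSpace} twice (once to each sub-extension, once to the full extension). The only cosmetic difference is that you phrase the decomposition via the second isomorphism theorem and the map $a \mapsto (a+I, a+J)$, whereas the paper writes the quotient map directly as $x + y \mapsto (x + I\cap J) \oplus (y + I\cap J)$; these are the same isomorphism.
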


\begin{proof}
There is a quasidiagonal extension
\[ \begin{tikzcd} 0 \arrow{r} & I \cap J \arrow{r} & A \arrow{r}{\pi} & I / (I \cap J) \oplus J / (I \cap J) \arrow{r} & 0 \end{tikzcd} \]
where $\pi(x + y) = (x + I \cap J) \oplus (y + I \cap J)$ for all $x \in I$ and $y \in J$.
Clearly the sequences
\[0\longrightarrow I\cap J\longrightarrow I\longrightarrow I\big/(I\cap J)\longrightarrow 0\] and
\[0\longrightarrow I\cap J\longrightarrow J\longrightarrow J\big/(I\cap J)\longrightarrow 0\]
are quasidiagonal. Since $I$ and $J$ are MF, we have $I\cap J$ is MF, and hence Corollary~\ref{ThreeSpace} implies $I\big/(I\cap J)$ and $J\big/(I\cap J)$ are MF. Therefore, $(I\big/(I\cap J)) \oplus (J\big/(I\cap J))$ MF and again by Corollary~\ref{ThreeSpace}, $A$ is MF.
\end{proof}

\section{The Dynamics of Ordered Abelian Groups}\label{sec:KTheoreticDynamics}

Given a \cstar-dynamical system $(A,G,\alpha)$, there is an induced action $\mathrm{K}_0(\alpha):G \curvearrowright \mathrm{K}_0(A)$ given by ordered group automorphisms. The main idea of this section is to relate \cstar-dynamical properties with K-theoretic approximation properties. Both stable finiteness and the MF property of the crossed product are witnessed at the level of K-theory.  Also, if the acting group is free and if $A$ is classifiable, then the K-theoretic dynamics contain pertinent information about the crossed product. For example, in the case when $A$ is an AF-algebra and $G = \mathbb{Z}$, Brown has shown in \cite{BrownAFE} that the quasidiagonality of the crossed product $A \rtimes \mathbb{Z}$ is completely determined by the induced action of $\mathbb{Z}$ on $\mathrm{K}_0(A)$.  This was extended by the first author in \cite{Rainone} to the case where $A$ is AF and $G$ is free by introducing a K-theoretic notion of MF actions on ordered groups (Definition \ref{defn:K0MFaction} below).  Moreover, it was shown in \cite{Rainone} that an MF action of a group $G$ on an AF-algebra $A$ always induces an MF action of $G$ on $\mathrm{K}_0(A)$, and an action of $G$ on $\mathrm{K}_0(A)$ is MF if and only if the actions satisfies an analogue of Brown's coboundary condition from \cite{BrownAFE} (see Definition \ref{defn:coboundarysubgroup}).  We show below that both results hold without conditions on the algebra $A$.

\begin{definition}\label{defn:coboundarysubgroup}
Let $K = (K, K^+, u)$ be an ordered abelian group and consider an action $\sigma : G \curvearrowright K$.  Define the \emph{coboundary subgroup} of $\sigma$ to be the group
\[H_\sigma:=\big\langle x - \sigma_s(x)\ :\ x \in K,\ s\in G\big\rangle \leq K.\]
We say $\sigma$ satisfies the \emph{coboundary condition} if $H_\sigma \cap K^+ = \{0\}$.

If $A$ is a stably finite \cstar-algebra and $\alpha : G \curvearrowright A$ is an action, we say $\alpha$ satisfies the coboundary condition if $\mathrm{K}_0(\alpha)$ does. To ease the notation we write $H_\alpha$ in place of $H_{K_0(\alpha)}$.
\end{definition}

The stable finiteness of the crossed product and the coboundary condition are related.

\begin{proposition}\label{prop:StablyFiniteImpliesCoboundary} Let $\alpha:G\rightarrow\Aut(A)$ be an action.  If $A\rtimes_{\lambda}G$ is stably finite, then $\alpha$ satisfies the coboundary condition. The converse holds if $A$ is exact, $\mathrm{K}_0(A)$ separates traces on $A$, and $\alpha$ is minimal (i.e.\ has no invariant ideals).
\end{proposition}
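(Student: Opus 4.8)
The plan is to handle the two implications separately. For the forward direction, the key observation is that the inclusion $\iota : A \hookrightarrow A \rtimes_\lambda G$ annihilates the coboundary subgroup in $\mathrm{K}_0$. Indeed, for a projection $q \in \mathbb{M}_n(A)$ and $s \in G$, the projection $\alpha_s(q)$ is conjugate to $q$ by the unitary $1_n \otimes u_s \in \mathbb{M}_n(A \rtimes_\lambda G)$, so $\mathrm{K}_0(\iota)([q]_0) = \mathrm{K}_0(\iota)([\alpha_s(q)]_0)$; since classes of projections generate $\mathrm{K}_0(A)$, this gives $H_\alpha \subseteq \ker \mathrm{K}_0(\iota)$. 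Now I would take $p \in H_\alpha \cap \mathrm{K}_0(A)^+$. As $A \rtimes_\lambda G$ is stably finite, so is its unital subalgebra $A$, and hence $p = [q]_0$ for an honest projection $q \in \mathbb{M}_n(A)$. Then $[q]_0 = \mathrm{K}_0(\iota)(p) = 0$ in $\mathrm{K}_0(A \rtimes_\lambda G)$, which means $q \oplus 1_m$ is Murray--von Neumann equivalent to $0_n \oplus 1_m$ for some $m$. Since $0_n \oplus 1_m \leq q \oplus 1_m$ and $\mathbb{M}_{n+m}(A \rtimes_\lambda G)$ is finite, these two equivalent projections must be equal, forcing $q = 0$ and hence $p = 0$. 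Thus $\alpha$ satisfies the coboundary condition, with no extra hypotheses needed.

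For the converse, I would first convert the coboundary condition into an invariant state on K-theory. Writing $K = \mathrm{K}_0(A)$ with order unit $u = [1_A]_0$ and $H = H_\alpha$, the quotient $K/H$ with cone $\rho(K^+)$ (where $\rho : K \to K/H$ is the quotient map) is again an ordered abelian group: the only point requiring the hypothesis is $\rho(K^+) \cap (-\rho(K^+)) = \{0\}$, which holds because $a + b \in H \cap K^+ = \{0\}$ whenever $\rho(a) = -\rho(b)$ with $a, b \in K^+$. Moreover $\rho(u)$ is a nonzero order unit for $K/H$ (stable finiteness of $A$ makes the ordered group $K$ meaningful and gives $[1_A]_0 \neq 0$), so the standard existence of states on ordered groups with order unit provides a state on $K/H$; pulling back along $\rho$ yields a state $\mu$ on $K$ vanishing on $H$. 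Vanishing on $H$ is precisely $\alpha$-invariance, i.e.\ $\mu = \mu \circ \mathrm{K}_0(\alpha_s)$ for all $s \in G$.

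Next I would promote $\mu$ to an invariant trace on $A$. By the theorem of Blackadar and R\o rdam, $\mu$ is induced by a quasitrace on $A$, and since $A$ is exact, Haagerup's theorem forces this quasitrace to be a genuine trace $\tau$ with $\mathrm{K}_0(\tau) = \mu$. The lift $\tau$ need not be invariant on the nose, and this is exactly where the hypothesis that $\mathrm{K}_0(A)$ separates traces does the work: for each $s$, the trace $\tau \circ \alpha_s^{-1}$ satisfies $\mathrm{K}_0(\tau \circ \alpha_s^{-1}) = \mu \circ \mathrm{K}_0(\alpha_s)^{-1} = \mu = \mathrm{K}_0(\tau)$, so injectivity of the trace-to-state map gives $\tau \circ \alpha_s^{-1} = \tau$. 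Thus $\tau$ is $\alpha$-invariant. Minimality then forces $\tau$ to be faithful: its trace-kernel ideal $\{ a \in A : \tau(a^*a) = 0\}$ is an $\alpha$-invariant two-sided ideal not containing $1_A$, hence $\{0\}$. Finally, $\tau \circ \mathbb{E}$ is a trace on $A \rtimes_\lambda G$ by invariance of $\tau$, and it is faithful because $\mathbb{E}$ is faithful; a unital C*-algebra carrying a faithful trace is stably finite, which completes the converse.

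The main obstacle is the lifting step, passing from the invariant $\mathrm{K}_0$-state $\mu$ to an actual trace on $A$. This is the one place where exactness is essential: it is precisely what upgrades the quasitrace produced by Blackadar--R\o rdam to a bona fide trace via Haagerup's theorem, and it cannot be circumvented by hand. Everything else is formal once this input is secured---the quotient-ordered-group construction yielding $\mu$, the transfer of invariance through the separating hypothesis, and the passage to faithfulness through minimality.
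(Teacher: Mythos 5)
Your proposal is correct and takes essentially the same route as the paper: the forward direction is the paper's argument (the coboundary subgroup lies in $\ker \mathrm{K}_0(\iota)$, and $\mathrm{K}_0(\iota)$ is faithful because $A \rtimes_\lambda G$ is stably finite — you simply unpack this faithfulness by hand via finiteness of projections), and your converse follows the paper's exact chain: invariant state on $\mathrm{K}_0(A)$, lift to a quasitrace by Blackadar--R{\o}rdam, upgrade to a trace by Haagerup using exactness, invariance of the trace from the separation hypothesis, faithfulness from minimality, and stable finiteness from the faithful trace $\tau \circ \mathbb{E}$. The only cosmetic difference is how the invariant state is produced: you pass to the quotient ordered group $\mathrm{K}_0(A)/H_\alpha$ and invoke the existence of a state there, while the paper extends the partial state $g + nu \mapsto n$ on $H_\alpha + \mathbb{Z}[1_A]_0$ directly via the Goodearl--Handelman extension theorem — the same machinery either way, and indeed your quotient trick is precisely what the paper uses in the proof of Theorem \ref{thm:K0EquivariantQDQ}.
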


\begin{proof}
When $A \rtimes_\lambda G$ is stably finite, the inclusion $\iota : A \rightarrow A \rtimes_\lambda G$ induces a faithful morphism $\mathrm{K}_0(\iota) : \mathrm{K}_0(A) \rightarrow \mathrm{K}_0(A \rtimes_\alpha G)$.  As $\iota \circ \alpha_s = \Ad_{u_s} \circ \iota$ for each $s \in G$, we have $\mathrm{K}_0(\iota) \circ \mathrm{K}_0(\alpha_s) = \mathrm{K}_0(\iota)$ for each $s \in G$.  Hence the coboundary subgroup $H_\alpha$ is contained in the kernel of $\mathrm{K}_0(\iota)$.  As $\iota$ is faithful, $H_\alpha \cap \mathrm{K}_0(A)^+ = \{0\}$ and hence $\alpha$ satisfies the coboundary condition.

Conversely, suppose $A$ is exact and has real rank zero and suppose $\alpha$ is minimal. Write $u=[1_A]_0$.  If $\alpha$ satisfies the coboundary condition, then $H_\alpha \cap \mathbb{Z}u = \{0\}$ and we may define a state $\beta_0 :H_\alpha + \mathbb{Z}u\rightarrow\mathbb{R}$ by $\beta_0(g+nu)=n$. By the Goodearl-Handelman Hahn-Banach Extension Theorem (see Theorem 6.8.3 in \cite{BlackadarKtheory}), $\beta_0$ extends to a state $\beta:\mathrm{K}_0(A)\rightarrow\mathbb{R}$. Note that $\beta$ is $\mathrm{K}_0(\alpha)$-invariant since $H_\alpha \subset \ker(\beta)$.  As $A$ is exact, $\beta=\mathrm{K}_0(\tau)$ for some trace $\tau$ on $A$ by combining the results of \cite{BlackadarRordam} and \cite{HaagerupQuasitrace} (see also Section 6.9 on \cite{BlackadarKtheory}).  As $\beta$ is $\mathrm{K}_0(\alpha)$-invariant, $\mathrm{K}_0(\tau \circ \alpha_s) = \mathrm{K}_0(\tau)$ for each $s \in G$, and, since $\mathrm{K}_0(A)$ separates traces on $A$, the trace $\tau$ is $\alpha$-invariant.  The ideal $N_\tau:=\{a\in A\ |\ \tau(a^*a)=0\}$ is an $\alpha$-invariant ideal, so minimality of the action implies that $N_\tau=\{0\}$ and hence $\tau$ is faithful. The composition $\tau\circ\mathbb{E}:A\rtimes_{\lambda}G\rightarrow\mathbb{C}$ is a faithful trace, and therefore $A\rtimes_{\lambda}G$ is stably finite.
\end{proof}

The following definition is taken from \cite{Rainone}.  It provides a notion of MF actions on ordered abelian groups.

\begin{definition}\label{defn:K0MFaction} Let $(K,K^+,u)$ be an ordered abelian group and consider an action $\sigma:G \rightarrow \Aut(K,K^+,u)$.  We say that $\sigma$ is MF if the following holds:

Given finite subsets $S \subset K^{+} \setminus \{0\}$ and $\Gamma \subset G$ there is a subgroup $H \leq K$, along with a group homomorphism $\mu: H \rightarrow \mathbb{Z}$ satisfying
\begin{enumerate}
\item $\{\sigma_{t}(g) : t \in \Gamma, \, g \in S \} \subset H$,
\item $\mu(g) > 0$ for each $g \in S$, and
\item $\mu(\sigma_{t}(g))=\mu(g)$ for all $g\in S$ and $t\in \Gamma$.
\end{enumerate}

If $(A,G,\alpha)$ is a C$^*$-dynamical system with $A$ stably finite, the action $\alpha$ is called $\mathrm{K}_0$-MF if the induced action $\mathrm{K}_0(\alpha)$ of $G$ on $\mathrm{K}_0(A)$ is MF.
\end{definition}

Note that when $K$ and $G$ are countable, an action $\sigma : G \curvearrowright K$ is MF if and only if there are $u_n \in \mathbb{Z}^+ \setminus \{0\}$ and a faithful approximate morphism
\[ (\lambda_n : K \rightarrow (\mathbb{Z}, \mathbb{Z}^+, u_n))_{n \geq 1} \]
which is constant on $G$-orbits in the following sense: if $x \in K$ and $s \in G$, then $\lambda_n(\sigma_s(x)) = \lambda_n(x)$ for all sufficiently large $n \in \mathbb{N}$.  Since there are faithful morphisms $(\mathbb{Z}, \mathbb{Z}^+, u_n) \rightarrow (\mathbb{Q}, \mathbb{Q}^+, 1)$ given by division by $u_n$, one has $\sigma$ is an MF action if and only if there is a faithful approximate morphism
\[ (\lambda_n : K \rightarrow (\mathbb{Q}, \mathbb{Q}^+, 1))_{n \geq 1} \]
which is constant on $G$-orbits.  Combining this observation with Proposition \ref{prop:groupapproxmorphism} yields the following result.

\begin{proposition}\label{prop:K0MFaction} Let $\sigma:G\rightarrow\Aut(K,K^+,u)$ be an action of a countable group $G$ on a countable ordered abelian group $K$.  Then $\sigma$ is MF if and only if there is a faithful morphism $\mu:K \rightarrow \mathbb{Q}_\omega$ with $\mu\circ\sigma_s = \mu$ for every $s \in G$.
\end{proposition}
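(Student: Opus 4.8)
The plan is to reduce everything to the reformulation of the MF condition stated immediately before the proposition: $\sigma$ is MF if and only if there is a faithful approximate morphism $(\lambda_n : K \to (\mathbb{Q}, \mathbb{Q}^+, 1))_{n \geq 1}$ which is constant on $G$-orbits, meaning $\lambda_n(\sigma_s(x)) = \lambda_n(x)$ for all $x \in K$, $s \in G$, and all sufficiently large $n$. Granting this, the proposition becomes a statement about promoting orbit-constant approximate morphisms to invariant morphisms into $\mathbb{Q}_\omega$ and back, which I would handle using Proposition \ref{prop:groupapproxmorphism} together with the explicit description of $c_\omega(\mathbb{Q})$. The single delicate point is that orbit-constancy and $\sigma$-invariance are not literally the same condition at the level of sequences, and bridging the two is where the real work lies.

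For the forward implication I would fix a faithful, orbit-constant approximate morphism $(\lambda_n : K \to (\mathbb{Q},\mathbb{Q}^+,1))_{n\geq1}$ coming from the MF hypothesis. By Proposition \ref{prop:groupapproxmorphism}(1) this induces a faithful morphism $\mu : K \to \mathbb{Q}_\omega$ with $\mu(x) = \kappa_\omega((\lambda_n(x))_n)$. To check invariance, fix $x \in K$ and $s \in G$; orbit-constancy gives $\lambda_n(\sigma_s(x)) = \lambda_n(x)$ for all large $n$, so $\{ n : \lambda_n(\sigma_s(x)) = \lambda_n(x) \}$ is cofinite and hence lies in $\omega$. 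Thus $(\lambda_n(\sigma_s(x)) - \lambda_n(x))_n \in c_\omega(\mathbb{Q})$, and applying $\kappa_\omega$ yields $\mu(\sigma_s(x)) = \mu(x)$, as required.

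For the converse the main obstacle surfaces. Starting from a faithful, $\sigma$-invariant morphism $\mu : K \to \mathbb{Q}_\omega$, Proposition \ref{prop:groupapproxmorphism}(2) supplies a faithful approximate morphism inducing $\mu$; however, $\sigma$-invariance only guarantees $\{ n : \lambda_n(\sigma_s(x)) = \lambda_n(x) \} \in \omega$ for each $x$ and $s$, which is strictly weaker than the eventual equality demanded by orbit-constancy. I would therefore not apply Proposition \ref{prop:groupapproxmorphism}(2) as a black box but instead re-run its diagonal construction with the orbit-constancy conditions folded in. Concretely, I would choose a unit-preserving set-theoretic lift $\psi : K \to \ell^\infty(\mathbb{Q})$ of $\mu$ with components $\psi_n : K \to \mathbb{Q}$, and fix increasing finite sets $\Omega_k \uparrow K$ and $\Gamma_k \uparrow G$ (possible since $K$ and $G$ are countable). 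For each $k$, the set of $n$ on which $\psi_n$ is simultaneously additive on $\Omega_k$, nonnegative on $\Omega_k \cap K^+$, nonzero on $\Omega_k \cap (K^+ \setminus \{0\})$, and satisfies $\psi_n(\sigma_s(x)) = \psi_n(x)$ for all $x \in \Omega_k$ and $s \in \Gamma_k$ is a finite intersection of members of $\omega$---each membership coming respectively from $\mu$ being a morphism, order-preserving, faithful, and $\sigma$-invariant---and hence lies in $\omega$. Choosing $n_k$ in this set with $n_k \uparrow \infty$ and setting $\lambda_n := \psi_{n_k}$ for $n_k \leq n < n_{k+1}$ produces a faithful approximate morphism into $(\mathbb{Q}, \mathbb{Q}^+, 1)$ that is constant on $G$-orbits, since any fixed $x$ and $s$ eventually lie in $\Omega_k$ and $\Gamma_k$. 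By the observation above, $\sigma$ is MF.

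I expect the reverse direction---specifically the passage from $\omega$-generic invariance to genuine eventual orbit-constancy via the diagonal argument---to be the only real content. The forward direction and the verification that each of the four constraints defines a member of $\omega$ should be routine once the definition of $c_\omega(\mathbb{Q})$ and the faithfulness and invariance of $\mu$ are unwound.
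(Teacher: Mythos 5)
Your proof is correct and follows the paper's intended route: the paper gives no separate argument for this proposition, deriving it simply by ``combining'' the orbit-constant reformulation of MF actions with Proposition \ref{prop:groupapproxmorphism}. Your re-running of the diagonal construction so that the invariance conditions are folded into the choice of the indices $n_k$ is precisely the routine detail the paper leaves implicit (it mirrors the proof of Proposition \ref{prop:approxmorphism}, to which Proposition \ref{prop:groupapproxmorphism} defers), and you are right that this step is needed because Proposition \ref{prop:groupapproxmorphism}(2) applied as a black box yields only $\omega$-large agreement, not the eventual orbit-constancy required by the definition of an MF action.
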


The following result was proven in the case of AF-algebras in \cite{Rainone}.

\begin{theorem}\label{thm:MFImpliesK0MF}
Suppose $A$ is a unital C*-algebra and $\alpha : G \rightarrow\Aut(A)$ is a given action.  If  $\alpha$ is MF, then $\alpha$ is K$_0$-MF.
\end{theorem}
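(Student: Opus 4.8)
The plan is to turn the approximate, merely $\ast$-linear data provided by an MF action into an honest, $G$-invariant, faithful morphism of ordered groups into $\mathbb{Q}_\omega$, and then to invoke Proposition \ref{prop:K0MFaction}. First I would reduce to the case that $A$ is separable and $G$ is countable. The $\mathrm{K}_0$-MF condition of Definition \ref{defn:K0MFaction} is tested on finite subsets $S \subseteq \mathrm{K}_0(A)^+ \setminus \{0\}$ and $\Gamma \subseteq G$, so after replacing $G$ by the countable subgroup $\langle \Gamma \rangle$ and $A$ by a separable, $\langle \Gamma \rangle$-invariant subalgebra carrying representatives of $S$, the restricted action is again MF (restricting the approximating maps of Definition \ref{defn:MFaction} to the subalgebra and the subgroup). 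Enlarging this subalgebra so that the finitely many $\mathbb{Z}$-relations among the relevant classes already hold in it---possible since $\mathrm{K}_0$ commutes with inductive limits---lets one transport the resulting $\mathbb{Z}$-valued homomorphism back to $\mathrm{K}_0(A)$. Note also that an MF action forces $A$ to be MF (take $F = \{e\}$ in Definition \ref{defn:MFaction}), hence stably finite, so that $\mathrm{K}_0(A)$ is genuinely an ordered abelian group.

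With $A$ separable and $G$ countable I would use the covariant reformulation recorded after Definition \ref{defn:MFaction}: since $\alpha$ is MF there is an embedding $\varphi : A \hookrightarrow \mathcal{Q}_\omega$ and a unitary representation $w : G \to \mathcal{U}(\mathcal{Q}_\omega)$ with $\varphi \circ \alpha_s = \Ad_{w_s} \circ \varphi$ for all $s \in G$. The decisive gain here is that $\varphi$ is an actual $\ast$-homomorphism, so applying the $\mathrm{K}_0$ functor is legitimate and automatically respects all relations. Since $\mathcal{Q}_\omega$ is MF by Proposition \ref{prop:permMF}(3), and hence stably finite, $\mathrm{K}_0(\varphi) : \mathrm{K}_0(A) \to \mathrm{K}_0(\mathcal{Q}_\omega)$ is a faithful morphism of ordered abelian groups. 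Composing with the isomorphism $\theta : \mathrm{K}_0(\mathcal{Q}_\omega) \overset{\cong}{\longrightarrow} \mathbb{Q}_\omega$ of Proposition \ref{prop:K0UHFUltrapower} produces a faithful morphism $\mu := \theta \circ \mathrm{K}_0(\varphi) : \mathrm{K}_0(A) \to \mathbb{Q}_\omega$.

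It then remains to check $G$-invariance. Applying $\mathrm{K}_0$ to $\varphi \circ \alpha_s = \Ad_{w_s} \circ \varphi$ gives $\mathrm{K}_0(\varphi) \circ \mathrm{K}_0(\alpha_s) = \mathrm{K}_0(\Ad_{w_s}) \circ \mathrm{K}_0(\varphi)$, and $\Ad_{w_s}$ is inner: for a projection $p$ the element $w_s p w_s^*$ is Murray--von Neumann equivalent to $p$ via the partial isometry $w_s p$, so $\mathrm{K}_0(\Ad_{w_s}) = \mathrm{id}$ on $\mathrm{K}_0(\mathcal{Q}_\omega)$. Hence $\mu \circ \mathrm{K}_0(\alpha_s) = \mu$ for every $s \in G$, and Proposition \ref{prop:K0MFaction} shows that $\mathrm{K}_0(\alpha)$ is MF; that is, $\alpha$ is $\mathrm{K}_0$-MF.

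The conceptual crux is the passage through the ultrapower: the defining data of an MF action are only approximately multiplicative, so no naive ``rank of $\varphi(p)$'' recipe defines a homomorphism on $\mathrm{K}_0(A)$, whereas the exact morphism $\varphi : A \to \mathcal{Q}_\omega$ furnished by the covariant reformulation makes $\mathrm{K}_0$-functoriality do this work for free, and the invariance drops out from inner automorphisms acting trivially on $\mathrm{K}_0$. I expect the main technical nuisance to be the reduction to the separable, countable setting---specifically, arranging that the $\mathbb{Z}$-valued homomorphism built on the separable subalgebra descends well-definedly to $\mathrm{K}_0(A)$---rather than the ultrapower argument itself, which is essentially formal once the covariant model and Propositions \ref{prop:K0UHFUltrapower} and \ref{prop:K0MFaction} are in hand.
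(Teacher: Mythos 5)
Your proposal is correct and follows essentially the same route as the paper's proof: pass to the separable/countable case, use the covariant representation $\varphi : A \hookrightarrow \mathcal{Q}_\omega$ with $\varphi \circ \alpha_s = \Ad_{w_s} \circ \varphi$, note that $\mathrm{K}_0(\varphi)$ is faithful by stable finiteness of $\mathcal{Q}_\omega$ and $G$-invariant because inner automorphisms act trivially on $\mathrm{K}_0$, and conclude via Propositions \ref{prop:K0UHFUltrapower} and \ref{prop:K0MFaction}. Your additional details (the separable reduction and the explicit Murray--von Neumann equivalence $w_s p \sim p$) merely flesh out steps the paper leaves implicit.
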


\begin{proof}
We may assume $A$ is separable and $G$ is countable.  There is a unitary representation $w:G \rightarrow \mathcal{U}(\mathcal{Q}_\omega)$ and a faithful morphism $\varphi:A \rightarrow \mathcal{Q}_\omega$ such that $\varphi \circ \alpha_s = \Ad_{w_s}\circ\varphi$.  Since $\mathcal{Q}_\omega$ is stably finite, the induced map $\mathrm{K}_0(\varphi) : \mathrm{K}_0(A) \rightarrow \mathrm{K}_0(\mathcal{Q}_\omega)$ is faithful.  Moreover, for $s \in G$,
\[\mathrm{K}_0(\varphi)\circ \mathrm{K}_0(\alpha_s) = \mathrm{K}_0(\varphi \circ \alpha_s) = \mathrm{K}_0(\Ad_{w_s} \circ \varphi_s) =\mathrm{K}_0(\varphi_s).\]
By Theorem \ref{prop:K0MFaction} and Proposition \ref{prop:K0UHFUltrapower}, $\mathrm{K}_0(\alpha)$ is MF as required.
\end{proof}

We now embark on showing that the MF property for actions and the coboundary condition are equivalent in full generality. We do this by first showing that any ordered abelian group exhibits an MF property. As with C$^*$-algebras, one could define the matricial field property for countable ordered abelian groups as follows:  $K$ if MF if and only if there is a faithful morphism $K \rightarrow \mathrm{K}_0(\mathcal{Q})_\omega\cong\mathbb{Q}_{\omega}$; in other words, the action of the trivial group of $K$ is MF.  The following result shows the matricial field property for ordered abelian groups is automatic.

\begin{theorem}\label{thm:K0QDQ}
For any countable ordered abelian group $(K,K^+,u)$, there is a faithful morphism $\lambda: K\rightarrow\mathrm{K}_0(\mathcal{Q}_\omega) \cong \mathbb{Q}_\omega$.
\end{theorem}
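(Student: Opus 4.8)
The plan is to reduce the statement to the construction of a suitable faithful approximate morphism and then invoke the preceding propositions. By Proposition \ref{prop:K0UHFUltrapower} we may identify $\mathrm{K}_0(\mathcal{Q}_\omega)$ with $\mathbb{Q}_\omega = \mathrm{K}_0(\mathcal{Q})_\omega$, so it suffices to produce a faithful morphism $\lambda : K \to \mathbb{Q}_\omega$. By Proposition \ref{prop:groupapproxmorphism} this is equivalent to constructing a faithful approximate morphism $(\lambda_n : K \to (\mathbb{Q}, \mathbb{Q}^+, 1))_{n \geq 1}$. I would build each $\lambda_n$ as an honest group homomorphism $K \to \mathbb{Q}$ with $\lambda_n(u) = 1$; then additivity holds on the nose, boundedness of $(\lambda_n(x))_n$ is automatic by Proposition \ref{prop:groupapproxmorphism}, and all that must be arranged is that for every $x \in K^+ \setminus \{0\}$ one has $\lambda_n(x) > 0$ for all sufficiently large $n$ (this single condition yields both eventual positivity and faithfulness).

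The heart of the argument is the following finitary claim: for every finite set $F \subseteq K^+ \setminus \{0\}$ there is a group homomorphism $\mu : K \to \mathbb{Q}$ with $\mu(u) = 1$ and $\mu(x) > 0$ for all $x \in F$. To prove this I would pass to the rational vector space $V := K \otimes_{\mathbb{Z}} \mathbb{Q}$ and work inside the finite-dimensional subspace $W$ spanned by the images of $u$ and of the elements of $F$. A homomorphism $K \to \mathbb{Q}$ is the same as a $\mathbb{Q}$-linear functional on $V$, and a functional with the desired properties exists by a theorem of the alternative (Gordan's lemma / linear programming duality over $\mathbb{Q}$): either there is a functional strictly positive on the finite set $F \cup \{u\} \subseteq W$, or there is a nontrivial relation $\sum_{x} \lambda_x\, x = 0$ with nonnegative rational coefficients $\lambda_x$, not all zero, ranging over $F \cup \{u\}$. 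The second alternative is impossible. Clearing denominators produces an element $z \in K^+$ which is a sum of elements of $K^+$ with at least one nonzero summand, so $z \neq 0$ (were $z = 0$, that summand would lie in $K^+ \cap (-K^+) = \{0\}$), while $z$ would be torsion in $K$; but every nonzero element of $K^+$ has infinite order, since $kx = 0$ with $x \in K^+ \setminus \{0\}$ and $k \geq 2$ forces $-x = (k-1)x \in K^+$, whence $x \in K^+ \cap (-K^+) = \{0\}$. Thus the first alternative holds; rescaling by $\mu(u) > 0$ normalizes $\mu(u) = 1$, extending the functional from $W$ to all of $V$ arbitrarily and composing with $K \to V$ gives the required homomorphism.

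With the claim in hand I would finish by a diagonal argument. Enumerate $K^+ \setminus \{0\} = \{x_1, x_2, \dots\}$ (possible since $K$ is countable) and, for each $n$, apply the claim to $F_n = \{x_1, \dots, x_n\}$ to obtain a homomorphism $\lambda_n : K \to \mathbb{Q}$ with $\lambda_n(u) = 1$ and $\lambda_n(x_i) > 0$ for $i \leq n$. Then $(\lambda_n)_{n \geq 1}$ is a faithful approximate morphism: additivity and unit-preservation are exact, and for each $k$ we have $\lambda_n(x_k) > 0$ once $n \geq k$. Proposition \ref{prop:groupapproxmorphism} then yields the faithful morphism $K \to \mathbb{Q}_\omega$, and Proposition \ref{prop:K0UHFUltrapower} identifies the target with $\mathrm{K}_0(\mathcal{Q}_\omega)$, completing the proof. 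The one genuinely nontrivial step is the finitary claim, and within it the real work is securing strict positivity on all of $F$ \emph{simultaneously}; the order-theoretic axiom $K^+ \cap (-K^+) = \{0\}$ is exactly what rules out the obstructing alternative, while everything else is bookkeeping handled by the ultrapower machinery already developed.
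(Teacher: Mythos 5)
Your proposal is correct, and its overall architecture matches the paper's: reduce via Propositions \ref{prop:K0UHFUltrapower} and \ref{prop:groupapproxmorphism} to a finitary positivity claim, prove that claim by finite\-/dimensional separation, then diagonalize over an enumeration of $K^+\setminus\{0\}$. The route through the finitary claim, however, is genuinely different. The paper works inside the finitely generated subgroup $H_n \leq K$ generated by the first $n+1$ positive elements, splits off torsion via the structure theorem $H_n \cong F \oplus T$, embeds $F$ into $\mathbb{Z}^d \subseteq \mathbb{R}^d$, and shows the convex hull of the images of the generators misses the origin; a separating hyperplane then gives a \emph{real} functional, which must be perturbed to take rational values on the standard basis before normalizing, and the resulting $\lambda_n$ is a homomorphism only on $H_n$, extended to $K$ as an arbitrary function (so additivity of the approximate morphism holds only eventually, using that the $H_n$ exhaust $K$). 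You instead tensor with $\mathbb{Q}$, which kills torsion functorially, and invoke Gordan's alternative over $\mathbb{Q}$ directly, so no perturbation step is needed and your $\lambda_n$ are honest unit\-/preserving homomorphisms defined on all of $K$, making additivity exact rather than eventual. What each buys: the paper's argument is self\-/contained modulo finite\-/dimensional Hahn--Banach separation, at the cost of the structure theorem for finitely generated abelian groups plus the rational\-/perturbation bookkeeping; yours outsources the same convex\-/geometric content to linear programming duality over an ordered field and yields cleaner maps. The core obstruction computation is identical in both: a nontrivial nonnegative integer combination of nonzero positive elements cannot vanish, even modulo torsion, because $K^+ \cap (-K^+) = \{0\}$ forces nonzero positive elements to have infinite order --- this is exactly the paper's contradiction $0 \lneqq a_{i_0}mx_{i_0} \leq m\sum_i a_i y_i = 0$, repackaged.
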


\begin{proof}
Enumerate the positive cone $K^+=\{0, x_0=u, x_1, x_2,\dots\}$ and fix $n\in\mathbb{N}$. The subgroup $H_n:=\langle x_0,x_1,\dots,x_n\rangle\leq K$ is finitely generated and abelian and thus decomposes as $H_n\cong F\oplus T$, where $T$ a finite group and $F$ is a free abelian group. Fix an isomorphism $\phi:F\rightarrow\mathbb{Z}^d$ of (unordered) groups. For each $0\leq i\leq n$, write $x_i = y_i\oplus z_i$ according to this decomposition.  Consider the convex hull
\[C:=\conv\{\phi(y_i)\ |\ 0\leq i\leq n\}\subset\mathbb{R}^d.\]
Note that $C$ is a compact, convex set in $\mathbb{R}^d$.  We claim $C$ does not contain the origin $0_{\mathbb{R}^d}$. By way of contradiction, suppose $\sum_{i=0}^{n}t_i\phi(y_i)=0_{\mathbb{R}^d}$, with $t_i\geq0$ and $\sum_{i=0}^{n}t_i=1$. Since $\phi(y_i)\in\mathbb{Z}^d$, we may assume that $t_i=a_i/b$ where $a_i\in\mathbb{Z}^+$ and $b\in\mathbb{N}$. Then
\[\phi\left(\sum_{i=0}^n a_iy_i\right)=\sum_{i=0}^n a_i\phi(y_i)=b\sum_{i=0}^n \frac{a_i}{b}\phi(y_i)=b\sum_{i=0}^n t_i\phi(y_i)=0_{\mathbb{R}^d}.\]
Since $\phi$ is injective we have $\sum_{i=0}^n a_iy_i=0_{F}$.  Let $m$ denote the order of $T$ and fix $i_0$ such that $a_{i_0} \neq 0$.  Then
\[ 0\lneqq a_{i_0}mx_i=a_{i_0}(my_i+mz_i)=a_{i_0}my_i \leq m \sum_{i=0}^n a_iy_i=0_{F}, \]
a contradiction.  So the claim holds.

There is, therefore, a linear functional $f : \mathbb{R}^d \rightarrow \mathbb{R}$ with $f(C) \subseteq (\varepsilon, \infty)$ for some $\varepsilon>0$. Denoting the standard basis of $\mathbb{R}^d$ by $\{e_i\}_{i=1}^{d}$, we may perturb $f$ so that $f(e_i) \in \mathbb{Q}$ and $f(C)\subset (\varepsilon/2, \infty)$.  Let $\tilde{\lambda}_n:H_n \rightarrow \mathbb{R}$ denote the composition
\[ H_n \overset{\pi_F}{\twoheadrightarrow }F \overset{\phi}{\longrightarrow} \mathbb{Z}^d \overset{\iota}{\hookrightarrow}\mathbb{R}^d\overset{f}{\longrightarrow} \mathbb{R}. \]
Then $\tilde{\lambda_n}$ is $\mathbb{Q}$-valued. Now define $\lambda_n:H_n\rightarrow\mathbb{Q}$ by $\lambda_n:=\tilde{\lambda}_n(\cdot)/\tilde{\lambda}_n(u)$ and extend to a function $\lambda_n: K\rightarrow\mathbb{Q}$ arbitrarily. It is a simple matter to check that the sequence $(\lambda_n:K\rightarrow \mathbb{Q})_{n\geq1}$ is a faithful approximate morphism of ordered abelian groups. Proposition \ref{prop:groupapproxmorphism} furnishes the desired faithful morphism $\lambda:K\rightarrow\mathbb{Q}_{\omega}$.
\end{proof}

The equivariant version of Theorem \ref{thm:K0QDQ} is false; that is, not every action on an ordered abelian group is MF.  For example, if $\alpha$ is an automorphism of an ordered abelian group $K$ and $\alpha(x) < x$ for some $x \in K$, then the action of $\mathbb{Z}$ induced by $\alpha$ is not MF.  The coboundary condition gives a more general obstruction to an action being MF.  In fact, the theorem below shows the coboundary condition is the only obstruction.  This generalizes Proposition 4.13 of \cite{Rainone} where the result is shown for dimension groups.

\begin{theorem}\label{thm:K0EquivariantQDQ}
Let $\sigma: G\rightarrow\Aut(K,K^+,u)$ be an action. Then $\sigma$ is MF if and only if it satisfies the coboundary condition.
\end{theorem}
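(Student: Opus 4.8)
The plan is to reduce everything to the ultrapower characterization of Proposition \ref{prop:K0MFaction}, so that both implications become assertions about the existence of a faithful, $\sigma$-invariant morphism $\mu : K \to \mathbb{Q}_\omega$ (by $\sigma$-invariant I mean $\mu \circ \sigma_s = \mu$ for all $s \in G$); throughout I assume $K$ and $G$ countable, as in the rest of the section. The forward implication is then immediate: if $\sigma$ is MF, Proposition \ref{prop:K0MFaction} supplies such a $\mu$, and since $\mu(x - \sigma_s(x)) = \mu(x) - \mu(\sigma_s(x)) = 0$ for every $x \in K$ and $s \in G$, the coboundary subgroup $H_\sigma$ lies in $\ker \mu$. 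Faithfulness of $\mu$ means $\ker \mu \cap K^+ = \{0\}$, whence $H_\sigma \cap K^+ \subseteq \ker \mu \cap K^+ = \{0\}$, which is exactly the coboundary condition.

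For the converse, assume $H_\sigma \cap K^+ = \{0\}$. The idea is to pass to the quotient $\bar{K} := K / H_\sigma$, with quotient map $q : K \to \bar{K}$, and to make $\bar{K}$ an ordered abelian group by declaring $\bar{K}^+ := q(K^+)$ and $\bar{u} := q(u)$. The action descends trivially: since $\sigma_s(x) - x \in H_\sigma$ for all $x$ and $s$, we have $q \circ \sigma_s = q$. The crucial point --- and the step I expect to require the real work --- is checking that $(\bar{K}, \bar{K}^+, \bar{u})$ is genuinely an ordered abelian group, i.e.\ that $\bar{K}^+ \cap (-\bar{K}^+) = \{0\}$. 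This is precisely where the coboundary hypothesis enters: if $q(a) = -q(b)$ with $a, b \in K^+$, then $a + b \in H_\sigma \cap K^+ = \{0\}$, forcing $a = b = 0$ and hence $q(a) = 0$. That $\bar{u}$ is an order unit and $\bar{K}^+$ generates $\bar{K}$ is inherited from $K$ via surjectivity of $q$.

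With $\bar{K}$ a countable ordered abelian group in hand, I would invoke Theorem \ref{thm:K0QDQ} to produce a faithful morphism $\bar{\lambda} : \bar{K} \to \mathbb{Q}_\omega$, and then set $\mu := \bar{\lambda} \circ q : K \to \mathbb{Q}_\omega$. This $\mu$ is a composite of morphisms, hence a morphism; it is $\sigma$-invariant because $q \circ \sigma_s = q$; and it is faithful because if $x \in K^+$ satisfies $\mu(x) = 0$, then $q(x) \in \bar{K}^+$ lies in $\ker \bar{\lambda}$, so $q(x) = 0$ by faithfulness of $\bar{\lambda}$, and thus $x \in H_\sigma \cap K^+ = \{0\}$. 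Proposition \ref{prop:K0MFaction} then yields that $\sigma$ is MF, completing the proof. In effect the argument shows that the coboundary condition is exactly what makes the quotient order on $K / H_\sigma$ well defined, after which the non-equivariant Theorem \ref{thm:K0QDQ} --- the statement that the trivial action on any countable ordered group is MF --- does the rest.
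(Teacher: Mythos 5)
Your proof is correct and follows essentially the same route as the paper's: the forward direction via Proposition \ref{prop:K0MFaction} and the kernel argument, and the converse by ordering the quotient $K/H_\sigma$ (using the coboundary condition to verify the positive cone is proper), applying Theorem \ref{thm:K0QDQ} to the quotient, and composing with the quotient map. You additionally spell out the verification that the paper labels ``easily verified,'' which is fine.
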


\begin{proof}
We may assume $K$ and $G$ are countable.  If $\sigma$ is MF, there is a faithful morphism $\mu:K\rightarrow\mathbb{Q}_\omega$ which is $\sigma$-invariant. Therefore, if $x\in K$ and $t\in G$, then $\mu(x-\sigma_{t}(x))=0$ which implies that $H_{\sigma}\subset\ker(\mu)$. Since $\mu$ is faithful, $H_\sigma$ cannot contain a non-zero positive element.

Now assume that $\sigma$ satsifies the coboundary condition. Define $L=K/H_\sigma$ to be the quotient group and set $L^+:=\pi(K^+)$ and $w=\pi(u)$ where $\pi:K\rightarrow L$ is the quotient map. Using the coboundary condition, it is easily verified that $(L,L^+,w)$ is an ordered abelian group and that $\pi$ is faithful. Theorem \ref{thm:K0QDQ} yields a faithful morphism $\lambda : L \rightarrow \mathbb{Q}_\omega$, and the composition $\mu:=\lambda\circ\pi$ is the map required to conclude that $\sigma$ is MF using Proposition \ref{prop:K0MFaction}.
\end{proof}

We end this section by presenting a tracial version of Theorem~\ref{thm:K0QDQ}, but, first, we fix some notation. Let $\tau_{\mathcal{Q}}$ denote the unique trace on $\mathcal{Q}$ and let $\tau_{\mathcal{Q}_{\omega}}$ be the induced trace on $\mathcal{Q}_\omega$. Note that $\mathrm{K}_0(\tau_{\mathcal{Q}})$ is an isomorphism $K_0(\tau_{\mathcal{Q}}) \overset{\cong}{\longrightarrow} \mathbb{Q} \subseteq \mathbb{R}$. Also, Proposition~\ref{prop:K0UHFUltrapower} states $K_0(\mathcal{Q}_\omega)\cong\mathbb{Q}_\omega$, and the state $\iota_\omega$ on $\mathbb{Q}_\omega$ induced by the state on $K_0(\tau_{\mathcal{Q}_\omega})$ can be described explicitly as $\displaystyle \iota_\omega(\kappa_\omega((q_n)_n)) = \lim_{n \rightarrow \omega} q_n$, for $(q_n)_n\in\ell^\infty(\mathbb{Q})$ . We will always view $\mathbb{Q}_\omega$ as being equipped with this state.  In fact, this is probably the only state.

\begin{theorem}\label{thm:K0QDTQ}
Suppose $K$ is a countable ordered abelian group, $G$ is a countable group, $\sigma : G \curvearrowright K$ is an action, and $\beta$ is a $\sigma$-invariant state on $K$. Then there is a state-preserving morphism $\lambda:K \rightarrow\mathbb{Q}_\omega$ such that $\lambda\circ \sigma_s = \lambda$ for every $s\in G$.
\end{theorem}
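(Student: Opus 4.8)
The plan is to mirror the proof of Theorem~\ref{thm:K0QDQ}, but to produce a \emph{state-preserving}, orbit-constant approximate morphism rather than a faithful one, and then to invoke Proposition~\ref{prop:groupapproxmorphism}. Equip each copy of the target with the inclusion state $(\mathbb{Q},\mathbb{Q}^+,1)\hookrightarrow(\mathbb{R},\mathbb{R}^+,1)$ and equip $K$ with $\beta$. As in the discussion preceding Proposition~\ref{prop:K0MFaction}, an approximate morphism $(\lambda_n:K\rightarrow(\mathbb{Q},\mathbb{Q}^+,1))_{n\geq1}$ that is constant on $G$-orbits (i.e.\ $\lambda_n(\sigma_s(x))=\lambda_n(x)$ for all large $n$) induces a morphism $\lambda:K\rightarrow\mathbb{Q}_\omega$ with $\lambda\circ\sigma_s=\lambda$; if it is moreover state-preserving, then so is $\lambda$, since $\iota_\omega$ is computed by limits along $\omega$. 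Thus it suffices to construct, for each finite $S\subseteq K$ (which we may assume contains $u$ and is closed under the finitely many automorphisms $\sigma_t$, $t\in\Gamma$), each finite $\Gamma\subseteq G$, and each $\varepsilon>0$, a group homomorphism $\lambda'$ on the finitely generated subgroup $H=\langle S\rangle$ with $\lambda'(u)=1$, $\lambda'(x)\geq0$ for $x\in S\cap K^+$, $\lambda'(\sigma_t(x))=\lambda'(x)$ for $x\in S$, $t\in\Gamma$, and $|\lambda'(x)-\beta(x)|<\varepsilon$ for $x\in S$; extending $\lambda'$ to $K$ arbitrarily and exhausting $K$, $K^+$, and $G$ by increasing finite sets yields the desired sequence.

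To build $\lambda'$, pass to the free part. By the structure theorem $H\cong\mathbb{Z}^d\oplus T$ with $T$ finite, and every homomorphism $H\rightarrow\mathbb{R}$ annihilates $T$, so, fixing an isomorphism $H/T\cong\mathbb{Z}^d$, such homomorphisms are parametrized by $(\mathbb{R}^d)^{*}$. Write $\bar{x}\in\mathbb{Z}^d$ for the image of $x\in H$, let $W_0\leq\mathbb{Z}^d$ be the image of the local coboundary group $\langle x-\sigma_t(x):x\in S,\ t\in\Gamma\rangle$, and let $W_0^{\perp}\subseteq(\mathbb{R}^d)^{*}$ be the functionals vanishing on $W_0$. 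Since $\beta$ is $\sigma$-invariant, $\beta(x-\sigma_t(x))=0$, so the functional $b\in(\mathbb{R}^d)^{*}$ corresponding to $\beta|_H$ lies in $W_0^{\perp}$ and satisfies $b(\bar{u})=1$ together with $b(\bar{x})=\beta(x)\geq0$ for $x\in S\cap K^+$. Any $\phi\in W_0^{\perp}$ with $\phi(\bar u)=1$ automatically pulls back to a $\Gamma$-invariant, unit-preserving homomorphism, so the remaining task is to find a \emph{rational} such $\phi$, close to $b$, that is nonnegative on $\{\bar{x}:x\in S\cap K^+\}$.

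For the rational approximation, set $S_0=\{x\in S\cap K^+:\beta(x)=0\}$, the positive elements on which $\beta$ is active, and consider the affine set $F=\{\phi\in W_0^{\perp}:\phi(\bar u)=1,\ \phi(\bar x)=0\text{ for all }x\in S_0\}$. All defining conditions are rational (because $W_0$, $\bar u$, and the $\bar x$ are integral), and $b\in F$, so $F$ is a nonempty rational affine subspace and $F\cap(\mathbb{Q}^d)^{*}$ is dense in $F$. For $x\in(S\cap K^+)\setminus S_0$ we have $\beta(x)>0$, a strict inequality stable under small perturbations. Choosing a rational $\phi\in F$ sufficiently close to $b$ therefore gives $\phi(\bar x)=0\geq0$ for $x\in S_0$, $\phi(\bar x)>0$ for the remaining positive $x$, and $|\phi(\bar x)-\beta(x)|<\varepsilon$ throughout; the induced $\mathbb{Q}$-valued homomorphism $\lambda'$ on $H$ is the required map.

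The main obstacle is precisely the positivity on the boundary: condition~(2) in the definition of an approximate morphism of ordered groups demands $\lambda_n(x)\geq0$ \emph{exactly} for positive $x$, so a careless rational approximation of $b$ could render $\phi(\bar x)$ slightly negative for the positive elements killed by $\beta$. The resolution is the observation above that these active constraints cut out a \emph{rational} affine subspace $F$ through $b$, inside which rational points remain dense and remain nonnegative on $S_0$; this is exactly where the integrality of $u$, $S$, and $W_0$ (and the countability permitting an exhaustion) are used. Everything else is the routine assembly into an approximate morphism followed by an appeal to Proposition~\ref{prop:groupapproxmorphism}, in direct parallel with Theorem~\ref{thm:K0QDQ}.
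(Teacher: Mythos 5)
Your proof is correct, but it takes a genuinely different route from the paper's. The paper begins by passing to the quotient $\hat{K} = K/\ker(\beta)$: since $\beta$ is $\sigma$-invariant, every coboundary $x - \sigma_s(x)$ lies in $\ker(\beta)$, so the quotient map is constant on $G$-orbits and anything factoring through $\hat{K}$ is automatically $\sigma$-invariant; moreover the induced state $\hat{\beta}$ on $\hat{K}$ is injective, so $\hat{K}$ is torsion-free and $\hat{\beta}(x) > 0$ for every $x \in \hat{K}^+ \setminus \{0\}$. The finite-stage problem then becomes trivial: on a finitely generated (hence free) subgroup one perturbs the values of $\hat{\beta}$ on a basis to rationals, and all positivity requirements are strict inequalities, hence stable under perturbation. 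In other words, the quotient trick eliminates in one stroke the two difficulties you isolate -- the $G$-invariance and the ``boundary'' positive elements with $\beta(x)=0$ -- whereas you address them locally: invariance by restricting to $W_0^{\perp}$, and the boundary elements by pinning $\phi$ to zero on $S_0$ and invoking the density of rational points in a nonempty affine subspace defined over $\mathbb{Q}$. Both arguments rest on the same two observations ($\beta$ kills coboundaries; $\beta$ is strictly positive off its kernel) and both conclude via Proposition~\ref{prop:groupapproxmorphism}, in parallel with Theorem~\ref{thm:K0QDQ}. The paper's version is shorter and hides the linear algebra; yours makes explicit exactly where integrality of the data enters, and it avoids having to verify that $(\hat{K}, \pi(K^+), \pi(u))$ with its induced state is a well-defined ordered abelian group (a point the paper passes over quickly).

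One small repair: a finite set $S$ cannot in general be taken ``closed under the automorphisms $\sigma_t$, $t \in \Gamma$,'' since orbits may be infinite. What your argument actually needs is only that $\sigma_t(x) \in H$ for $x \in S$ and $t \in \Gamma$, so that the invariance constraints and the coboundary lattice $W_0$ make sense; this is arranged by taking $H = \langle S \cup \{\sigma_t(x) : x \in S,\ t \in \Gamma\} \rangle$, which is still finitely generated, while imposing positivity and the $\varepsilon$-approximation of $\beta$ only on the original exhausting set $S$ (exactly the device used in Definition~\ref{defn:K0MFaction}). With that change, the assembly into a state-preserving, orbit-constant approximate morphism goes through as you describe.
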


\begin{proof}
Let $\hat{K}$ denote the quotient $K / \ker(\beta)$ and let $\pi : K \rightarrow \hat{K}$ denote the quotient map.  Then $\hat{K}$ is an ordered abelian group with positive cone $\hat{K}^+ = \pi(K^+)$ and order unit $\hat{u} = \pi(u)$.  The quotient map $\pi$ is constant on the orbits of $\sigma$ since $\beta$ is $\sigma$-invariant.  Also, the state $\beta$ on $\hat{K}$ induces a faithful state $\hat{\beta} : \hat{K} \rightarrow \mathbb{R}$.  To complete the proof, it suffices to produce a state-preserving morphism $\hat{K} \rightarrow \mathbb{Q}_\omega$.

Note that $\hat{\beta} : \hat{K} \rightarrow \mathbb{R}$ is injective and hence $\hat{K}$ is torsion free.  Fix a finite set $S \subseteq \hat{K}^+ \setminus \{0\}$ with $\hat{u} \in S$ and fix $\varepsilon > 0$.  Decreasing $\varepsilon$, we may assume $\varepsilon < \hat{\beta}(x)$ for each $x \in S$.  Let $H$ denote the subgroup of $\hat{K}$ generated by $S$ and fix a basis $e_1, \ldots, e_n$ for $H$.  Given $\delta > 0$ (to be determined later), there are $t_1, \ldots, t_n \in \mathbb{Q}$ with $t_i \approx_\delta \hat{\beta}(e_i)$.  Define $\lambda_0 : H \rightarrow \mathbb{Q}$ by $\lambda_0(e_i) = t_i$.  As each $x \in S$ is a $\mathbb{Z}$-linear combination of the $e_i$ and $\lambda_0(e_i) \approx_\delta \hat{\beta}(e_i)$ for each $i =1, \ldots, n$, we have $\lambda_0(x) \approx_\varepsilon \hat{\beta}(x)$ for all $x \in S$ so long as $\delta > 0$ is sufficiently small.  In particular, $\lambda_0(x) > 0$ for all $x \in S$.

For every $S \subseteq \hat{K}^+$ and $\varepsilon > 0$, we have constructed $\lambda_0 : \langle S \rangle \rightarrow \mathbb{Q}$ with $\lambda_0(x) > 0$ and $\lambda_0(x) \approx_\varepsilon \hat{\beta}(x)$ for all $x \in S$; note in particular that $\lambda_0(\hat{u}) \approx_\varepsilon 1$.  From here, constructing a state-preserving morphism $\hat{K} \rightarrow \mathbb{Q}_\omega$ is routine using Proposition \ref{prop:groupapproxmorphism}.  As mentioned in the first paragraph of the proof, composing this morphism with the quotient map $\pi : K \rightarrow \hat{K}$ yields the desired morphism $\lambda : K \rightarrow \mathbb{Q}_\omega$.
\end{proof}

\section{Some Consequences of Classification}\label{sec:ClassificationStuff}

The results in the previous section essentially solve the question when an action is MF at the level of the K$_0$-group.  With restrictions on the class of C$^*$-algebras considered, classification techniques allow for the K$_0$-approximations to be lifted to norm approximations and hence one can recover MF-approximations of dynamical systems from the K-theory.  This section is devoted to collecting the classification results and their consequences needed for our main results.

\begin{theorem}\label{thm:LiftingK0Approximations}
Suppose $A$ is a unital AF-algebra and $\lambda: \mathrm{K}_0(A) \rightarrow \mathrm{K}_0(\mathcal{Q}_\omega)$ is a morphism of ordered abelian groups.  There is a unital, $\ast$-homomorphism $\varphi:A\rightarrow \mathcal{Q}_\omega$ with $\mathrm{K}_0(\varphi) = \lambda$.  If $\psi$ is another such morphism, then $\varphi$ and $\psi$ are unitarily equivalent.
\end{theorem}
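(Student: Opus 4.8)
The plan is to establish existence and uniqueness first for finite-dimensional algebras and then bootstrap to the AF-limit, exploiting the good $\mathrm{K}_0$-behaviour and the saturation features of the ultrapower $\mathcal{Q}_\omega$. Write $A = \overline{\bigcup_n F_n}$ as the closure of an increasing union of finite-dimensional unital subalgebras $F_n$, so that $\mathrm{K}_0(A) = \varinjlim \mathrm{K}_0(F_n)$, and set $\lambda_n := \lambda \circ \mathrm{K}_0(\iota_n)$, where $\iota_n : F_n \hookrightarrow A$ is the inclusion. The first point I would record is that $\mathcal{Q}_\omega$ has cancellation of projections: by the proof of Proposition~\ref{prop:K0UHFUltrapower}, projections in $\mathbb{M}_d(\mathcal{Q}_\omega)$ lift to projections in $\mathbb{M}_d(\ell^\infty(\mathcal{Q}))$, and if $[p]_0 = [q]_0$ in $\mathrm{K}_0(\mathcal{Q}_\omega) \cong \mathbb{Q}_\omega$ then the lifts agree in $\mathrm{K}_0(\mathcal{Q})$ along a set in $\omega$, so the partial isometries furnished by cancellation in $\mathcal{Q}$ splice to a partial isometry witnessing $p \sim q$. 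Since $[1-p]_0 = [1-q]_0$ as well, one gets $1 - p \sim 1 - q$, and hence $p$ and $q$ are unitarily equivalent in $\mathcal{Q}_\omega$.

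For finite-dimensional existence, fix $F = \bigoplus_i \mathbb{M}_{d_i}$ and a unital morphism $\mu : \mathrm{K}_0(F) \to \mathrm{K}_0(\mathcal{Q}_\omega)$. Using $\mathrm{K}_0(\mathcal{Q}_\omega) \cong \mathbb{Q}_\omega$ and Proposition~\ref{prop:groupapproxmorphism}, I would lift $\mu$ to an approximate morphism $(\mu_n : \mathrm{K}_0(F) \to \mathbb{Q})_{n \geq 1}$; as $\mathrm{K}_0(F)$ is finitely generated with positive cone generated by the minimal projection classes $[e_i]$, the values $\mu_n([e_i])$ define, for all large $n$, a genuine unital ordered morphism $\mathrm{K}_0(F) \to \mathrm{K}_0(\mathcal{Q})$. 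Each of these is realized by a unital $\ast$-homomorphism $F \to \mathcal{Q}$ (as $\mathcal{Q}$ is UHF), and assembling these components yields a unital $\ast$-homomorphism $F \to \mathcal{Q}_\omega$ inducing $\mu$. For finite-dimensional uniqueness, if $\varphi, \psi : F \to \mathcal{Q}_\omega$ are unital with $\mathrm{K}_0(\varphi) = \mathrm{K}_0(\psi)$, then $[\varphi(e_i)]_0 = [\psi(e_i)]_0$ for every minimal projection, so by the first paragraph the diagonal minimal projections of the two matrix-unit systems are pairwise unitarily equivalent with matching complements; one thereby builds a unitary $u \in \mathcal{Q}_\omega$ with $\Ad_u \circ \varphi = \psi$.

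To pass to the limit, I would realize each $\lambda_n$ by a unital $\ast$-homomorphism $\varphi_n : F_n \to \mathcal{Q}_\omega$ as above. Since $\varphi_{n+1}|_{F_n}$ and $\varphi_n$ both induce $\lambda_n$, finite-dimensional uniqueness gives unitaries correcting them to agree on $F_n$, and a one-sided Elliott intertwining assembles the $\varphi_n$ into a unital $\ast$-homomorphism $\varphi : A \to \mathcal{Q}_\omega$ with $\mathrm{K}_0(\varphi) = \lambda$. For uniqueness at the AF level, given $\varphi, \psi : A \to \mathcal{Q}_\omega$ with $\mathrm{K}_0(\varphi) = \mathrm{K}_0(\psi) = \lambda$, finite-dimensional uniqueness produces, for each $n$, a unitary $u_n \in \mathcal{Q}_\omega$ with $u_n \varphi(a) u_n^* = \psi(a)$ for all $a \in F_n$; as $\bigcup_n F_n$ is dense, these are approximate intertwiners on every finite subset of $A$ to arbitrary tolerance, and the countable saturation of the norm ultrapower (using that $A$ is separable and $\omega$ is a free ultrafilter on $\mathbb{N}$) condenses them into a single unitary $u \in \mathcal{Q}_\omega$ with $\Ad_u \circ \varphi = \psi$.

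The genuinely routine content is the finite-dimensional existence and uniqueness; the delicate step is the passage to the limit. For existence one must coherently intertwine the mutually incompatible finite-dimensional realizations, and for uniqueness one must compress a sequence of \emph{exact} intertwiners on the $F_n$ into a single exact unitary on all of $A$. Both rely on the reindexing/saturation property of $\mathcal{Q}_\omega$ --- that a countable family of conditions, each approximately satisfiable, is exactly satisfiable in the ultrapower --- which is precisely where using $\mathcal{Q}_\omega$ in place of the asymptotic sequence algebra earns its keep. I expect verifying this reindexing carefully, together with confirming cancellation and projection lifting in $\mathcal{Q}_\omega$, to be the main obstacle.
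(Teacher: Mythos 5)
Your proposal is correct and follows essentially the same route as the paper's proof: cancellation of projections in $\mathcal{Q}_\omega$ yielding the finite-dimensional existence and uniqueness (which the paper simply cites as Lemma 7.3.2 of the R{\o}rdam--Larsen--Laustsen book), Elliott's one-sided intertwining over the AF filtration, and a reindexing/saturation argument to upgrade approximate unitary equivalence to exact unitary equivalence in the ultrapower. Your version merely fills in the details the paper leaves to references, and correctly observes the simplification that the finite-dimensional intertwiners in $\mathcal{Q}_\omega$ can be taken exact.
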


\begin{proof}
Note that $\mathcal{Q}_\omega$ has cancellation of projections and hence for a finite dimensional C$^*$-algebra $B$, any morphism $\mathrm{K}_0(B) \rightarrow \mathrm{K}_0(\mathcal{Q}_\omega)$ lifts to a morphism $B \rightarrow \mathcal{Q}_\omega$ which is unique up to unitary equivalence (see Lemma 7.3.2 in \cite{RordamBlueBook}).  By Elliott's one-sided intertwining argument, if $A$ is an AF-algebra, every morphism $\mathrm{K}_0(A) \rightarrow \mathrm{K}_0(\mathcal{Q}_\omega)$ lifts to a morphism $A \rightarrow \mathcal{Q}_\omega$ which is unique up to approximate unitary equivalence.  A standard reindexing argument shows two morphisms form $A$ into $\mathcal{Q}_\omega$ are approximately unitarily equivalent if and only if they are unitarily equivalent and this completes the proof.
\end{proof}

Recall that a C$^*$-algebra has \emph{real rank zero} if every self-adjoint element can be approximated arbitrarily well by a self-adjoint with finite spectrum. Also, an A$\mathbb{T}$-algebra is a C$^*$-algebra which can be expressed as an inductive limit of C$^*$-algebras of the form $C(\mathbb{T})\otimes F$ where $F$ is finite dimensional.  A remarkable theorem of Elliott (see \cite{ElliottATAlgebras}) gives a complete classification of A$\mathbb{T}$-algebras of real rank zero in terms of their K-theory.  We reproduce his result in Theorem \ref{thm:ElliottATAlgebras}.  First we recall the invariant.

For a C$^*$-algebra $A$, let $\mathrm{K}_*(A) := \mathrm{K}_0(A) \oplus \mathrm{K}_1(A)$ which is viewed as a $\mathbb{Z}/2$-graded abelian group.  Define the \emph{graded dimension range} by
\[\mathcal{D}(A) =\big \{([p]_0, [v+p^\perp ]_1)\ |\  p\in\mathcal{P}(A), v\in\mathcal{U}(pAp)\big \}\subset \mathrm{K}_*(A),\]
and let $\mathrm{K}_*^+(A)$ denote the semigroup in $\mathrm{K}_*(A)$ generated by $\mathcal{D}(A)$.  Then $(\mathrm{K}_*(A), \mathrm{K}_*^+(A))$ is an ordered abelian group. A morphism $\theta:(\mathrm{K}_*(A), \mathcal{D}(A))\rightarrow(\mathrm{K}_*(B),\mathcal{D}(B)) $ is a group homomorphism $\theta:\mathrm{K}_*(A)\rightarrow\mathrm{K}_*(B)$ satisfying
\[ \theta(\mathrm{K}_0(A)) \subseteq \mathrm{K}_0(B), \quad \theta(\mathrm{K}_1(A)) \subseteq \mathrm{K}_1(B), \quad \text{and} \quad \theta(\mathcal{D}(A)) \subseteq \mathcal{D}(B). \]
Every $\ast$-homomorphism $\varphi:A\rightarrow B$ between unital \cstar-algebras induces a morphism $\mathrm{K}_*(\varphi):(\mathrm{K}_*(A), \mathcal{D}(A))\rightarrow(\mathrm{K}_*(B),\mathcal{D}(B)) $ in a natural way.

\begin{theorem}[Elliott, \cite{ElliottATAlgebras}]\label{thm:ElliottATAlgebras}
Let $A$ and $B$ be A$\mathbb{T}$-algebras of real rank zero.  Given a  morphism $\theta:(\mathrm{K}_*(A),\mathcal{D}(A) \rightarrow (\mathrm{K}_*(B),\mathcal{D}(B))$, there is a $\ast$-homomorphism $\varphi: A \rightarrow B$ with $\mathrm{K}_*(\varphi)=\theta$.  Moreover, any two such lifts are approximately unitarily equivalent.
\end{theorem}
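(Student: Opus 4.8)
The statement is Elliott's classification theorem, so the plan is to reconstruct the approximate intertwining argument underlying it. Write $A = \varinjlim(A_i, \phi_i)$ and $B = \varinjlim(B_j, \psi_j)$ as inductive limits of circle algebras $A_i, B_j$ of the form $C(\mathbb{T}) \otimes F$ with $F$ finite dimensional and the connecting maps unital. The proof reduces to an \emph{existence} and a \emph{uniqueness} statement at the level of the building blocks, which are then fed into Elliott's intertwining machinery (the same one-sided argument already invoked in the proof of Theorem \ref{thm:LiftingK0Approximations}). The essential feature I would exploit is that real rank zero of the limit forces a density of projections, so that any self-adjoint element is approximable by one of finite spectrum; consequently a \cstar-morphism out of a circle algebra can be approximated, up to unitary equivalence, by one whose spectral data along $\mathbb{T}$ is concentrated on finitely many points. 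This collapses the continuous eigenvalue functions that obstruct the general A$\mathbb{T}$ classification down to finitely supported data.

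For uniqueness at the level of building blocks, I would show that two unital \cstar-morphisms $\mu, \nu : C(\mathbb{T}) \otimes F \to D$, with $D$ a real rank zero building block (or the limit $B$), that agree on the graded invariant $(\mathrm{K}_*, \mathcal{D})$ are approximately unitarily equivalent. After approximating each by a map with finite circle-spectrum, $\mu$ and $\nu$ become, up to unitary equivalence, direct sums of point evaluations twisted by unitaries carrying prescribed winding numbers. The $\mathrm{K}_0$ data fixes the projection multiplicities, while the $\mathrm{K}_1$ component of the dimension range $\mathcal{D}$ fixes the winding numbers over each projection; cutting into sufficiently many mutually equivalent subprojections, available by real rank zero together with cancellation, then lets one match the two maps by a single unitary.

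For existence and assembly, given the morphism $\theta$ I would realize the composites $\mathrm{K}_*(A_i) \to \mathrm{K}_*(A) \to \mathrm{K}_*(B)$ (the second map being $\theta$) by genuine \cstar-morphisms $\gamma_i : A_i \to B$ landing in some finite stage $B_{j(i)}$, using that multiplicities and winding numbers over each summand may be prescribed freely, subject only to the order and dimension-range constraints that $\theta$ respects by hypothesis. The uniqueness result lets one conjugate the $\gamma_i$ so that $\gamma_{i+1} \circ \phi_i$ is approximately unitarily equivalent to $\gamma_i$ with summable errors; a one-sided approximate intertwining then produces the desired $\varphi : A \to B$ with $\mathrm{K}_*(\varphi) = \theta$. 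For the final clause, any two lifts agree on the invariant, so a reindexing argument built from the same uniqueness theorem, exactly as in the proof of Theorem \ref{thm:LiftingK0Approximations}, shows they are approximately unitarily equivalent.

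The hard part will be the uniqueness theorem for the circle-algebra blocks, specifically the simultaneous bookkeeping of the $\mathrm{K}_0$ order data and the $\mathrm{K}_1$ winding data packaged together in $\mathcal{D}$: one must first use real rank zero to reduce the spectral measures to finite point masses, and then show the two resulting finite collections of winding-twisted point evaluations can be conjugated onto one another while respecting the matching of projections dictated by the invariant.
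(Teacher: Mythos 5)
There is an important mismatch to flag before assessing your argument: the paper contains no proof of Theorem \ref{thm:ElliottATAlgebras} at all. It is stated with the attribution [Elliott, \cite{ElliottATAlgebras}] and used as a black box (in Corollary \ref{cor:K0EmeddingOfATAlgebras}, in the proof of Theorem \ref{thm:ASHimpliesRationallyAT}, and in the proof of Theorem \ref{thm:MainResult}); the only lifting argument the paper actually carries out is the far easier AF case, Theorem \ref{thm:LiftingK0Approximations}. So your proposal can only be measured against Elliott's published proof. At the level of architecture you have that proof right: building-block existence and uniqueness theorems fed into an approximate intertwining. But the building-block uniqueness lemma you propose is false as stated, and the mechanism you offer for it cannot be repaired into a correct one.

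Here is the concrete failure. Take $\mu, \nu : C(\mathbb{T}) \rightarrow \mathbb{M}_2$ with $\mu(z) = \diag(1,-1)$ and $\nu(z) = 1$. Both are unital, both induce the same morphism on $(\mathrm{K}_*, \mathcal{D})$ (since $\mathrm{K}_1(\mathbb{M}_2) = 0$), and $\mathbb{M}_2$ is an A$\mathbb{T}$-algebra of real rank zero; yet they are not approximately unitarily equivalent, because conjugation preserves spectra and $\mathrm{spec}(\diag(1,-1)) \neq \{1\}$. This does not contradict Elliott's theorem, whose real rank zero hypothesis is on the \emph{domain} as well --- and no circle algebra $C(\mathbb{T}) \otimes F$ has real rank zero (its real rank is one), so your phrase ``$D$ a real rank zero building block'' is vacuous. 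The role of real rank zero of the domain is precisely to supply the data your lemma is missing: on a real rank zero algebra, self-adjoints are approximated by linear combinations of projections, so $\tau \circ \varphi = \tau \circ \psi$ for every trace $\tau$ whenever $\mathrm{K}_0(\varphi) = \mathrm{K}_0(\psi)$. Elliott's building-block uniqueness theorem therefore assumes agreement of $\mathrm{K}_*$ \emph{and} (approximate) agreement of tracial/eigenvalue-distribution data, and the hard work --- the small-variation-of-traces characterization of real rank zero for A$\mathbb{T}$ limits, plus Berg's technique of cutting and splicing eigenvalue strands to redistribute winding --- goes into verifying and exploiting that extra hypothesis. Your proposed substitute, a reduction to finite spectral data, is moreover $\mathrm{K}_1$-obstructed: if $\varphi$ is a norm limit of morphisms whose spectral data is supported on finitely many points, then $\varphi(z \otimes 1)$ is a limit of finite-spectrum unitaries, hence lies in the connected component of the identity and has trivial $\mathrm{K}_1$ class (unitary $\mathrm{K}_1$ classes are unchanged by perturbations of norm less than $2$). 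Likewise ``point evaluations twisted by unitaries carrying prescribed winding numbers'' cannot exist: a unitary whose spectrum omits a point of the circle is an exponential, so nonzero winding forces full spectrum; and $\mathrm{K}_1$ prescribes only the \emph{total} winding number, never windings ``over each projection'' --- redistributing winding among strands is exactly what the Berg technique is for. So the step you defer as ``the hard part'' is not merely hard; as set up, its input is already inconsistent.
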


The following simple consequence of Elliott's classification of A$\mathbb{T}$-algebras is vital in proving our main result, indeed, it will allow us to reduce from A$\mathbb{T}$-algebras of real rank zero to AF-algebras.

\begin{corollary}\label{cor:K0EmeddingOfATAlgebras}
If $A$ is an A$\mathbb{T}$-algebra of real rank zero, there is an AF-algebra $B$ and an embedding $\varphi:A \hookrightarrow B$ such that $\mathrm{K}_0(\varphi)$ is an isomorphism of ordered abelian groups.
\end{corollary}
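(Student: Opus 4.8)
The plan is to realize the ordered group $\mathrm{K}_0(A)$ as the $\mathrm{K}_0$-group of a unital AF-algebra $B$, then feed a suitable morphism of graded invariants into Elliott's classification theorem (Theorem~\ref{thm:ElliottATAlgebras}) to manufacture a unital $\ast$-homomorphism $A \to B$ inducing a prescribed order isomorphism on $\mathrm{K}_0$, and finally to exploit real rank zero to upgrade this map to an embedding. We may assume $A$ is unital.

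First I would check that $(\mathrm{K}_0(A), \mathrm{K}_0(A)^+, [1_A]_0)$ is a countable dimension group. Writing $A = \varinjlim \big( C(\mathbb{T}) \otimes F_i \big)$ with each $F_i$ finite dimensional, each $\mathrm{K}_0(C(\mathbb{T}) \otimes F_i)$ is simplicial with positive connecting maps, and the real rank zero hypothesis forces the order on $\mathrm{K}_0(A)$ to coincide with the inductive-limit order. Thus $\mathrm{K}_0(A)$ is an inductive limit of simplicial groups, hence a dimension group, and the Effros--Handelman--Shen realization theorem (equivalently, Elliott's classification of AF-algebras) produces a unital AF-algebra $B$ together with an order-unit-preserving isomorphism $\theta_0 : (\mathrm{K}_0(A), \mathrm{K}_0(A)^+, [1_A]_0) \to (\mathrm{K}_0(B), \mathrm{K}_0(B)^+, [1_B]_0)$.

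Next I would package $\theta_0$ as a morphism of the graded invariant and apply Theorem~\ref{thm:ElliottATAlgebras} to the pair $(A, B)$, which is legitimate since an AF-algebra is in particular an A$\mathbb{T}$-algebra of real rank zero. Because $\mathrm{K}_1(B) = 0$, I define $\theta : (\mathrm{K}_*(A), \mathcal{D}(A)) \to (\mathrm{K}_*(B), \mathcal{D}(B))$ to be $\theta_0$ on $\mathrm{K}_0(A)$ and $0$ on $\mathrm{K}_1(A)$. The only nontrivial point is the containment $\theta(\mathcal{D}(A)) \subseteq \mathcal{D}(B)$: a generator $([p]_0, [v + p^\perp]_1)$ is sent to $(\theta_0([p]_0), 0)$, and since $\theta_0$ is a unital order isomorphism, $\theta_0([p]_0)$ lies in the scale of $\mathrm{K}_0(B)$, hence equals $[q]_0$ for an honest projection $q \in \mathcal{P}(B)$ by cancellation (stable rank one) of $B$; taking $w = q$ exhibits $(\theta_0([p]_0), 0) = ([q]_0, [q + q^\perp]_1) \in \mathcal{D}(B)$. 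Theorem~\ref{thm:ElliottATAlgebras} then yields a $\ast$-homomorphism $\varphi : A \to B$ with $\mathrm{K}_*(\varphi) = \theta$; as $\theta_0$ preserves the order unit, $\varphi$ may be taken unital, and $\mathrm{K}_0(\varphi) = \theta_0$ is an isomorphism of ordered abelian groups.

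Finally I would deduce injectivity from real rank zero. If $\ker(\varphi) \neq 0$, then, real rank zero passing to ideals, $\ker(\varphi)$ contains a nonzero projection $p$; since A$\mathbb{T}$-algebras have stable rank one and are therefore stably finite, $[p]_0 \neq 0$ in $\mathrm{K}_0(A)$. But $\mathrm{K}_0(\varphi)([p]_0) = [\varphi(p)]_0 = 0$, contradicting injectivity of $\mathrm{K}_0(\varphi) = \theta_0$. Hence $\varphi$ is the desired embedding. I expect the main obstacle to lie precisely in this last step, namely in isolating the two roles played by the hypothesis---real rank zero, so that every nonzero ideal contains a nonzero projection, and stable finiteness, so that such a projection has nonzero class; the realization of $\mathrm{K}_0(A)$ as a dimension group and the verification that $\theta$ respects the scale are by comparison routine consequences of the structure theory for A$\mathbb{T}$-algebras of real rank zero.
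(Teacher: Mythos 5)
Your proof is correct and follows essentially the same route as the paper: realize $\mathrm{K}_0(A)$ as the $\mathrm{K}_0$-group of an AF-algebra via the dimension group structure, lift the order isomorphism to a $\ast$-homomorphism using Theorem~\ref{thm:ElliottATAlgebras}, and deduce injectivity from real rank zero together with stable finiteness. The only difference is that you spell out the step the paper leaves implicit---extending $\theta_0$ to a morphism of the graded invariant $(\mathrm{K}_*, \mathcal{D})$ and verifying the scale condition $\theta(\mathcal{D}(A)) \subseteq \mathcal{D}(B)$---which is a welcome addition rather than a divergence.
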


\begin{proof}
It is well-known that $\mathrm{K}_0(C(\mathbb{T}))\cong\mathbb{Z}$, so $\mathrm{K}_0(C(\mathbb{T}, F)) \cong \mathrm{K}_0(F)$ as ordered abelian groups for any finite dimensional C$^*$-algebra $F$.  By the continuity of K-theory, $\mathrm{K}_0(A)$ is a dimension group.  Therefore, there is an AF-algebra $B$ and an isomorphism of ordered abelian groups $\theta : \mathrm{K}_0(A) \rightarrow \mathrm{K}_0(B)$.  As $B$ is an AF-algebra, $B$ is also an A$\mathbb{T}$-algebra of real rank zero, and, by Elliott's Theorem (Theorem \ref{thm:ElliottATAlgebras}), there is a $\ast$-homomorphism $\varphi : A \rightarrow B$ with $\mathrm{K}_0(\varphi) = \theta$. It only remains to show $\varphi$ is injective.  If $p$ is a projection in $\ker(\varphi)$, then $\theta([p]_0) = [\varphi(p)]_0 =0$.  As $\theta$ is injective, $[p]_0 = 0$, and as $A$ is stably finite, $p = 0$.  Since $A$ has real rank zero, $\ker(\varphi)$ also has real rank zero and hence is the closed span of its projections. Thus $\ker(\varphi) =\{0\}$.
\end{proof}

Recall that a C$^*$-algebra $A$ is \emph{subhomogeneous} if there is an $n \in \mathbb{N}$ such that every irreducible representation $\pi : A \rightarrow \mathbb{B}(\mathcal{H})$  satisfies $\dim(\mathcal{H})\leq n$.  A C$^*$-algebra is \emph{approximately subhomogeneous} (ASH) if it is a direct limit of subhomogeneous C$^*$-algebras. Using a deep classification result of Dadarlat and Gong in \cite{DadarlatGong}, we can further reduce the approximation problem from ASH-algebras of real rank zero to A$\mathbb{T}$-algebras of real rank zero.

\begin{theorem}\label{thm:ASHimpliesRationallyAT}
If $A$ is an ASH-algebra of real rank zero, then $A \otimes \mathcal{Q}$ is an A$\mathbb{T}$-algebra of real rank zero.
\end{theorem}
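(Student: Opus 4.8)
The plan is to combine a computation of the (total) K\nobreakdash-theory of $A \otimes \mathcal{Q}$ with the classification theorem of Dadarlat and Gong for real rank zero algebras \cite{DadarlatGong}. First I would record the two structural facts that survive tensoring with $\mathcal{Q}$: writing $A \otimes \mathcal{Q} = \varinjlim_n (A \otimes \mathbb{M}_{d_n})$ for a suitable supernatural number, each stage $A \otimes \mathbb{M}_{d_n}$ is subhomogeneous and has real rank zero (real rank cannot increase under tensoring with matrices), and both properties pass to the inductive limit. Hence $A \otimes \mathcal{Q}$ is again an ASH-algebra of real rank zero and, in particular, lies in the class classified in \cite{DadarlatGong} by its ordered total K\nobreakdash-theory $\underline{\mathrm{K}}$, i.e.\ ordinary graded K\nobreakdash-theory together with the coefficient groups $\mathrm{K}_*(\,\cdot\,;\mathbb{Z}/m)$ and the Bockstein operations, all suitably ordered.

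The key simplification is that tensoring with $\mathcal{Q}$ rationalizes K\nobreakdash-theory. Since $\mathrm{K}_0(\mathcal{Q}) \cong \mathbb{Q}$ and $\mathrm{K}_1(\mathcal{Q}) = 0$, the K\"unneth theorem gives $\mathrm{K}_*(A \otimes \mathcal{Q}) \cong \mathrm{K}_*(A) \otimes \mathbb{Q}$, a $\mathbb{Q}$-vector space; in particular it is torsion free, and the positive cone $\mathrm{K}_0^+(A \otimes \mathcal{Q})$ is a $\mathbb{Q}_{>0}$-cone, so the order is unperforated. The universal coefficient sequence $0 \to \mathrm{K}_*(A \otimes \mathcal{Q}) \otimes \mathbb{Z}/m \to \mathrm{K}_*(A \otimes \mathcal{Q};\mathbb{Z}/m) \to \mathrm{Tor}(\mathrm{K}_{*-1}(A \otimes \mathcal{Q}), \mathbb{Z}/m) \to 0$ then has vanishing outer terms for every $m \geq 2$, so each coefficient group is zero and the Bockstein operations are trivial. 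Thus the ordered total K\nobreakdash-theory $\underline{\mathrm{K}}(A \otimes \mathcal{Q})$ carries no more information than the graded ordered group $(\mathrm{K}_*(A \otimes \mathcal{Q}), \mathrm{K}_*^+(A \otimes \mathcal{Q}), \mathcal{D}(A \otimes \mathcal{Q}))$. Finally, Riesz interpolation for $\mathrm{K}_0$ (a consequence of the real rank zero hypothesis) combines with unperforation to show $\mathrm{K}_0(A \otimes \mathcal{Q})$ is a dimension group, while $\mathrm{K}_1(A \otimes \mathcal{Q})$ is torsion free.

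With the invariant understood, I would realize it by an A$\mathbb{T}$-algebra. By Elliott's range-of-invariant results for real rank zero A$\mathbb{T}$-algebras \cite{ElliottATAlgebras}, there is such an algebra $B$ and an isomorphism $(\mathrm{K}_*(B), \mathcal{D}(B)) \cong (\mathrm{K}_*(A \otimes \mathcal{Q}), \mathcal{D}(A \otimes \mathcal{Q}))$ of graded ordered groups with order units; the relevant data --- a rational dimension group in degree zero and a torsion free group in degree one, together with a graded dimension range --- is precisely what the circle building blocks $C(\mathbb{T}) \otimes F$ produce. Since $B$ is itself a real rank zero AH-algebra with torsion free K\nobreakdash-theory, its total K\nobreakdash-theory likewise reduces to $(\mathrm{K}_*(B), \mathcal{D}(B))$, so the isomorphism above is an isomorphism $\underline{\mathrm{K}}(A \otimes \mathcal{Q}) \cong \underline{\mathrm{K}}(B)$ of ordered total K\nobreakdash-theory. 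The Dadarlat-Gong classification theorem \cite{DadarlatGong} then promotes this to a $*$-isomorphism $A \otimes \mathcal{Q} \cong B$, which exhibits $A \otimes \mathcal{Q}$ as an A$\mathbb{T}$-algebra of real rank zero, as desired.

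The main obstacle I expect is the realization and matching step. One must verify that the rationalized graded ordered K\nobreakdash-theory --- the dimension group $\mathrm{K}_0$ with its order unit $[1]_0$, the torsion free group $\mathrm{K}_1$, and the graded dimension range $\mathcal{D}$ --- lies exactly in the range of Elliott's invariant for real rank zero A$\mathbb{T}$-algebras, so that a target $B$ genuinely exists, and that the collapse of the coefficient groups really does identify the Dadarlat-Gong invariant of $A \otimes \mathcal{Q}$ with that of $B$, leaving no residual Bockstein data to reconcile. Establishing Riesz interpolation for $\mathrm{K}_0(A \otimes \mathcal{Q})$ is the technical heart of the argument, and it is the point at which the real rank zero hypothesis is essential.
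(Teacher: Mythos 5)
Your overall architecture matches the paper's: collapse the total K\nobreakdash-theory of $A \otimes \mathcal{Q}$ to the graded ordered group $\mathrm{K}_*$, realize that invariant by a real rank zero A$\mathbb{T}$-algebra $B$ via Elliott's range theorem, then promote the isomorphism of invariants to a $*$-isomorphism via Dadarlat--Gong classification. But there is a genuine gap exactly where you flag your ``main obstacle,'' and the paper closes it with a tool your proposal never invokes: the Dadarlat--Gong \emph{structure} theorem (Theorem 8.14 of \cite{DadarlatGong}), which writes $A \otimes \mathcal{Q}$ --- a real rank zero ASH-algebra with slow dimension growth --- as an inductive limit of finite direct sums of matrix algebras over $\mathbb{C}$, $C(\mathbb{T})$, $C(\mathbb{W}_n)$, and $\tilde{\mathbb{I}}_n$. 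This decomposition supplies the three facts your sketch needs but does not establish: (i) cancellation and stable rank one, which together with real rank zero give the Riesz decomposition property of the \emph{graded} group $\mathrm{K}_*(A \otimes \mathcal{Q})$ via Elliott's Theorem 3.2 in \cite{ElliottATAlgebras}; note that real rank zero alone only yields Riesz decomposition for the projection semigroup $V(A)$, and already passing from $V(A)$ to $\mathrm{K}_0(A)$ requires cancellation, so your parenthetical ``a consequence of the real rank zero hypothesis'' does not stand on its own; (ii) the identification of the cone $\mathrm{K}_*(A)^+$ (defined through $\mathrm{K}_0(C(\mathbb{T},A))$) with the cone $\mathrm{K}_*^+(A)$ generated by the dimension range, which the paper proves building block by building block; and (iii) an explicit presentation of $\mathrm{K}_*(A \otimes \mathcal{Q})$ as a limit of the elementary graded ordered groups $\mathbb{Z} \oplus 0$ and $(\mathbb{Z} \oplus \mathbb{Z}, \text{strict order})$, which is how the paper sees the invariant land in the range of Elliott's Theorem 8.1. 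Your substitute --- ``a rational dimension group in degree zero and a torsion-free group in degree one, with a graded dimension range, is precisely the range'' --- treats $\mathrm{K}_0$ and $\mathrm{K}_1$ separately, but the invariant is the graded \emph{ordered} group, and membership in Elliott's range is a condition on that graded order (graded Riesz); this is the step that must be proved, not asserted.

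There is also a smaller but genuine error: you claim that since $B$ has torsion-free K\nobreakdash-theory, its total K\nobreakdash-theory reduces to $(\mathrm{K}_*(B), \mathcal{D}(B))$. Torsion-freeness kills only the $\mathrm{Tor}$ term in $0 \to \mathrm{K}_i(B) \otimes \mathbb{Z}/n \to \mathrm{K}_i(B;\mathbb{Z}/n) \to \mathrm{Tor}(\mathrm{K}_{1-i}(B), \mathbb{Z}/n) \to 0$; the tensor term vanishes only when $\mathrm{K}_*(B)$ is divisible. Indeed $C(\mathbb{T})$ has torsion-free K\nobreakdash-theory and non-zero mod-$n$ K\nobreakdash-theory, so the implication is false as stated. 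The conclusion is salvageable, since $\mathrm{K}_*(B) \cong \mathrm{K}_*(A \otimes \mathcal{Q})$ is a $\mathbb{Q}$-vector space and hence divisible --- this is essentially why the paper first establishes $B \cong B \otimes \mathcal{Q}$ (using Elliott's classification) before comparing total invariants and applying Dadarlat--Gong --- but your stated reasoning for the collapse of $\underline{\mathrm{K}}(B)$ needs to be replaced by a divisibility argument.
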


Although Theorem \ref{thm:ASHimpliesRationallyAT} appears to be well known to those working in classification, the result is not easy to extract from the literature.  The rest of this section is devoted to showing how Theorem \ref{thm:ASHimpliesRationallyAT} follows from classification results with the key tools being the classification of real rank zero ASH-algebras due to Dadarlat and Gong in \cite{DadarlatGong} and the range of the invariant for real rank zero A$\mathbb{T}$-algebras described by Elliott in \cite{ElliottATAlgebras}.

It is a classical result that $A$ is subhomogeneous if and only if there is an embedding of $A$ into $\mathbb{M}_n(C(X))$ for some $n \in \mathbb{N}$ and some compact metric space $X$.  In the real rank zero case, with a mild assumption (which will be automatic for $A \otimes \mathcal{Q}$), a theorem of Dadarlat and Gong gives a much more refined result on the structure of ASH-algebras (Theorem \ref{ASHBuildingBlocks} below).  Before stating this result, we need some notation.

Define the \emph{dimension drop algebra} to be
\[ \tilde{\mathbb{I}}_n := \{ a \in C([0, 1], \mathbb{M}_n) : a(0), a(1) \in \mathbb{C} 1_{\mathbb{M}_n} \}. \]
Every ASH-algebra can be written as an inductive system $(A_n, \nu_n)$ where each $A_n$ has the form
\[ A_n = \left( \bigoplus_{i=1}^r \mathbb{M}_{k_i}(C(X_i)) \right) \oplus \left(\bigoplus_{j=1}^s \mathbb{M}_{l_j}(\tilde{\mathbb{I}}_{m_j}) \right) \]
where each $X_i$ is a finite CW complex.

The following technical condition is Definition 2.1 in \cite{DadarlatGong}.  It is a necessary condition for classification, but it can always be arranged to hold in our setting by tensoring with $\mathcal{Q}$.

\begin{definition}\label{defn:slowdimensiongrowth}
An ASH-algebra $A$ has \emph{slow dimension growth} if $A$ is isomorphic to the limit of an inductive system as described above such that
\[ \inf_{n \in \mathbb{N}} \, \sup_{d \in \mathbb{N}} \, \lim_{r \rightarrow \infty} \, \inf \, \left\{ \frac{\operatorname{rank} \nu^{j, i}_{n, r}(1_{A_{j, i}})}{\operatorname{dim}(X_{j, r})} : \operatorname{dim}(X_{j, r}) \geq d, \, \operatorname{rank} \nu^{j, i}_{r, n}(1_{A_n}) \neq 0 \right\} = + \infty, \]
where the $\nu^{j, i}_{r, n}$ denote the components of the maps $\nu_{r, n} : A_n \rightarrow A_r$.
\end{definition}

For us, the most important point to note about the definition is that if $B$ is an infinite dimensional UHF-algebra and $A \otimes B \cong A$, then $A$ has slow dimension growth since by replacing the $A_n$ with $\mathbb{M}_{k_n}(A_n)$ for integers $k_n$ increasing sufficiently rapidly, one may arrange for the numerators in the expression above to be arbitrarily large.

The following deep structural result is Theorem 8.14 in \cite{DadarlatGong}.  We write $\mathbb{D}$ to denote the closed unit disc in $\mathbb{C}$ and write $\mathbb{W}_n$ to denote the CW complex obtained as the pushout of the inclusion $\mathbb{T} \rightarrow \mathbb{D}$ and the map $\mathbb{T} \rightarrow \mathbb{T}$ given by $z \mapsto z^n$.

\begin{theorem}[Dadarlat and Gong]\label{ASHBuildingBlocks}
If $A$ is an ASH-algebra of real rank zero with slow dimension growth, then $A$ is a direct limit of finite direct sums of matrix algebras over the C*-algebras $\mathbb{C}$, $C(\mathbb{T})$, $C(\mathbb{W}_n)$, and $\tilde{\mathbb{I}}_n$.
\end{theorem}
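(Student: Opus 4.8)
The plan is to obtain this as a \emph{reduction of building blocks} statement inside the Dadarlat--Gong classification program for real rank zero ASH-algebras, combining their existence and uniqueness theorems with a range-of-invariant computation. The guiding principle is that, in the real rank zero case with slow dimension growth, the complete invariant is the graded total K-theory $\underline{\mathrm{K}}(A) = \bigoplus_{n \geq 2} \mathrm{K}_*(A; \mathbb{Z}/n)$ equipped with the order on $\mathrm{K}_0$ and the natural Bockstein operations; no tracial data enters separately, since real rank zero forces the tracial state space to be determined by $(\mathrm{K}_0(A), \mathrm{K}_0(A)^+)$. First I would verify that the given $A$ lies in the precise class to which the classification theorem of \cite{DadarlatGong} applies --- this is exactly where the slow dimension growth hypothesis is used, as it controls the connecting maps of the defining inductive system $(A_n, \nu_n)$.

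The core of the argument is a range-of-invariant computation showing the four listed building blocks suffice. I would compute the total K-theory of each and observe that they realize, respectively, every free and torsion contribution to both K-groups: $\mathbb{C}$ contributes free $\mathrm{K}_0$; $C(\mathbb{T})$ contributes free $\mathrm{K}_1$; $C(\mathbb{W}_n)$, being the algebra of functions on the Moore space $M(\mathbb{Z}/n, 1)$ obtained by attaching a $2$-cell to $\mathbb{T}$ by a degree-$n$ map, has $\mathrm{K}_0 \cong \mathbb{Z} \oplus \mathbb{Z}/n$ and $\mathrm{K}_1 = 0$, contributing torsion in $\mathrm{K}_0$; and the dimension-drop algebra $\tilde{\mathbb{I}}_n$ has $\mathrm{K}_0 \cong \mathbb{Z}$ and $\mathrm{K}_1 \cong \mathbb{Z}/n$, contributing torsion in $\mathrm{K}_1$. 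Passing to finite direct sums of matrix algebras over these blocks and forming inductive limits with carefully chosen connecting maps, I would show that any invariant arising from a real rank zero ASH-algebra is realized. Concretely, given $A$, I would build a system $B = \varinjlim B_k$, with each $B_k$ such a direct sum, so that $\underline{\mathrm{K}}(B) \cong \underline{\mathrm{K}}(A)$ compatibly with the order and the Bockstein package.

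Finally I would upgrade this isomorphism of invariants to a C*-isomorphism $A \cong B$ by an approximate two-sided intertwining assembled from the existence and uniqueness theorems of \cite{DadarlatGong}. The main obstacle, and the genuinely deep input, is the uniqueness theorem underlying this last step: that two homomorphisms between building blocks inducing the same map on ordered total K-theory are approximately unitarily equivalent. This is delicate precisely because of the torsion blocks $C(\mathbb{W}_n)$ and $\tilde{\mathbb{I}}_n$, whose homomorphisms are \emph{not} determined by $\mathrm{K}_0$ alone and require control of the full Bockstein data together with UCT/KK-theoretic lifting; the companion range computation, realizing arbitrary inductive-limit invariants by these specific blocks rather than by general matrix algebras over finite CW complexes, is the other substantial technical ingredient. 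Both are furnished by the machinery of \cite{DadarlatGong}, and I would cite them at this point rather than reprove them.
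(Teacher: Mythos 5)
The paper does not prove this statement at all: it is quoted verbatim as Theorem 8.14 of \cite{DadarlatGong}, so the only ``proof'' on offer is the citation, and your sketch --- classification by scaled ordered total K-theory with Bockstein operations, a range-of-invariant realization using exactly these four building blocks (whose $\mathrm{K}_*$ you compute correctly, matching the table the paper itself records), and an approximate intertwining, with the deep uniqueness and range theorems deferred to \cite{DadarlatGong} --- is a faithful reconstruction of how that theorem is proved there, and indeed mirrors the strategy the authors themselves run in the rationalized special case when proving Theorem \ref{thm:ASHimpliesRationallyAT}. The one slip worth flagging is your definition $\underline{\mathrm{K}}(A) = \bigoplus_{n \geq 2} \mathrm{K}_*(A;\mathbb{Z}/n)$, which omits the integral summand: the invariant must contain $\mathrm{K}_*(A)$ itself (the paper takes $\bigoplus_{n=0}^{\infty} \mathrm{K}_*(A;\mathbb{Z}/n)$, with $n=0$ giving ordinary K-theory), since the mod-$n$ groups alone carry neither the order structure nor the free part of the invariant.
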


It is known that each of the four algebras above has cancellation of projections.  In particular, every real rank zero ASH-algebra with slow dimension growth has cancellation of projections and hence also has stable rank one.

There is a complete classification of real rank zero ASH-algebras with slow dimension growth, also due to Dadarlat and Gong in \cite{DadarlatGong}.  The invariant is the scaled, ordered, total K-theory $(\underline{\mathrm{K}}(A), \underline{\mathrm{K}}(A)^+, \Sigma(A))$ where $\underline{\mathrm{K}}(A) = \bigoplus_{n=0}^\infty \mathrm{K}_*(A ; \mathbb{Z}/n)$ equipped with the action of the Bockstein operators on $\underline{\mathrm{K}}(A)$ (the precise definitions can be found in Section 4 of \cite{DadarlatGong} but will not be relevant for us).  The groups $\mathrm{K}_*(A ; \mathbb{Z} / n)$ fit into an exact sequence
\[ \mathrm{K}_i(A) \overset{n}{\longrightarrow} \mathrm{K}_i(A) \rightarrow \mathrm{K}_i(A ; \mathbb{Z}/n) \rightarrow \mathrm{K}_{1-i}(A) \overset{n}{\longrightarrow} \mathrm{K}_{1-i}(A)\quad i=1,2 \]
where the $n$ denotes the map defined by multiplication by $n$.  If $A \cong A \otimes \mathcal{Q}$, then $\mathrm{K}_*(A) \cong \mathrm{K}_*(A) \otimes \mathbb{Q}$ and, in particular, $\mathrm{K}_*(A)$ is divisible. Hence multiplication by $n$ is an automorphism of $\mathrm{K}_*(A)$ for $n > 0$ and $\mathrm{K}_*(A ; \mathbb{Z}/n) = 0$ for $n > 0$.  Also one can show the Bockstein operations are trivial in this case since $\mathrm{K}_*(A ; \mathbb{Z}/n) = 0$ for $n > 0$.  Hence when $A \cong A \otimes \mathcal{Q}$,
\[ \underline{\mathrm{K}}(A) = \mathrm{K}_*(A) = \mathrm{K}_0(A) \oplus \mathrm{K}_1(A). \]

We describe the order and scale on $\mathrm{K}_*(A)$ in Dadarlat and Gong's classification theorem.  The scale is given by $\Sigma(A) := \{ [p]_0 \in \mathrm{K}_0(A)\ |\ p \in \mathcal{P}(A) \}$, and the order structure is induced by an isomorphism $\mathrm{K}_0(C(\mathbb{T},A)) \rightarrow \mathrm{K}_*(A)$
which we briefly describe below.

Consider the split exact sequence
\[ \begin{tikzcd} 0 \arrow{r} & SA \arrow{r}{\iota} & C(\mathbb{T}, A) \arrow[yshift = -.5 ex]{r}[swap]{\varepsilon} & A \arrow{r} \arrow[yshift = .5 ex]{l}[swap]{j} & 0 \end{tikzcd} \]
where $\varepsilon(f)=f(1)$, $\iota$ is inclusion, and $j(a)(z)=a$, $z\in\mathbb{T}, a\in A$. This induces a split exact sequence at the level of $\mathrm{K}_0$ and an isomorphism $\varphi_A : \mathrm{K}_0(C(\mathbb{T}, A)) \rightarrow \mathrm{K}_0(A) \oplus \mathrm{K}_0(SA)$ given by
\[ \varphi_A([p]_0) = [p(1)]_0 \oplus ([p]_0 - [p(1)]_0),\quad p\in\mathcal{P}_\infty(C(\mathbb{T},A)).\]
Now recall that there is an isomorphism $\theta_A : \mathrm{K}_1(A) \rightarrow \mathrm{K}_0(SA)$, where $SA = C_0(\mathbb{R}) \otimes A$ is the suspension of $A$ (see, for example, Section 10.1 of \cite{RordamBlueBook}).  The map $\theta_A$ is defined as follows: given $u \in \mathcal{U}_n(A)$, the unitary $u \oplus u^* \in \mathcal{U}_{2n}(A)$ is homotopic to the identity; that is, there is a continuous path $w : [0, 1] \rightarrow \mathcal{U}_{2n}(A)$ such that $w(0) = u \oplus u^*$ and $w(1) = 1_{2n}$.  Define a path of projections $q : [0, 1] \rightarrow \mathcal{P}_{2n}(A)$ by $q(t) = w(t) (1_n \oplus 0_n) w(t)^*$.  Then $q(0) = q(1) = 1_n \oplus 0_n$  and $[q]_0 - n [1]_0 \in \mathrm{K}_0(SA)$.  The map $\theta_A : \mathrm{K}_1(A) \rightarrow \mathrm{K}_0(SA)$ given by $\theta_A([u]_1) = [q]_0 - n[1]_0$ is an isomorphism. By composing $\varphi_A$ with $\operatorname{id} \oplus \theta_A^{-1}$, one obtains an isomorphism
\[ \psi_A : \mathrm{K}_0(C(\mathbb{T}, A)) \overset{\cong}\longrightarrow \mathrm{K}_0(A) \oplus \mathrm{K}_1(A)=\mathrm{K}_*(A).\]
Define $\mathrm{K}_*(A)^+ \subseteq \mathrm{K}_*(A)$ to be the image of $\mathrm{K}_0(C(\mathbb{T}, A))^+$ under this isomorphism.

We will show that for real rank zero ASH-algebras $A$ with slow dimension growth $\mathrm{K}_*(A)^+ = \mathrm{K}_*^+(A)$. One inclusion holds in full generality.

\begin{proposition}\label{prop:ConeContainment}
If $A$ is a unital C*-algebra, then $\mathrm{K}_*^+(A) \subseteq \mathrm{K}_*(A)^+$.
\end{proposition}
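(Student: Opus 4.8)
The plan is to reduce the stated containment to a statement about the generators of $\mathrm{K}_*^+(A)$. Since $\psi_A : \mathrm{K}_0(C(\mathbb{T},A)) \to \mathrm{K}_*(A)$ is a group isomorphism carrying the semigroup $\mathrm{K}_0(C(\mathbb{T},A))^+$ onto $\mathrm{K}_*(A)^+$ by definition, the cone $\mathrm{K}_*(A)^+$ is closed under addition. As $\mathrm{K}_*^+(A)$ is the semigroup generated by the graded dimension range $\mathcal{D}(A)$, it therefore suffices to prove $\mathcal{D}(A) \subseteq \mathrm{K}_*(A)^+$; that is, for each $p \in \mathcal{P}(A)$ and $v \in \mathcal{U}(pAp)$ I must exhibit a projection $q$ over $C(\mathbb{T},A)$ with $\psi_A([q]_0) = ([p]_0, [v+p^\perp]_1)$.

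The construction I would use is a loop of projections adapted to the corner $pAp$. Write $u = v + p^\perp \in \mathcal{U}(A)$ and record the identity $upu^* = vv^* = p$, which holds because $v$ is a unitary of the corner $pAp$. Using the Whitehead rotation inside $\mathbb{M}_2(pAp)$ to connect $v \oplus v^*$ to the unit $p \oplus p$ of the corner and then adding the constant summand $p^\perp \oplus p^\perp$, I obtain a continuous path $w : [0,1] \to \mathcal{U}_2(A)$ with $w(0) = u \oplus u^*$ and $w(1) = 1_2$ which restricts to a path of unitaries in $\mathbb{M}_2(pAp)$. Setting $q(t) = w(t)(p \oplus 0)w(t)^*$, the identity $upu^* = p$ gives $q(0) = q(1) = p \oplus 0$, so $q$ is a genuine projection over $C(\mathbb{T}, A)$ and $[q]_0 \in \mathrm{K}_0(C(\mathbb{T},A))^+$.

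It then remains to compute $\psi_A([q]_0) = \varepsilon_*[q]_0 \oplus \theta_A^{-1}([q]_0 - j_*\varepsilon_*[q]_0)$, following the description of $\varphi_A$ and $\psi_A$ in the excerpt. The $\mathrm{K}_0$-coordinate is immediate: $\varepsilon_*[q]_0 = [q(1)]_0 = [p]_0$. For the $\mathrm{K}_1$-coordinate I would compare $q$ with the standard loop $\tilde q(t) = w(t)(1_A \oplus 0)w(t)^*$ that computes $\theta_A([u]_1)$ for this \emph{same} path $w$. Because $w$ is constant outside the corner, a direct check gives $w(t)(p^\perp \oplus 0)w(t)^* = p^\perp \oplus 0$ for all $t$, whence $\tilde q(t) = q(t) \oplus (p^\perp \oplus 0)$ is an orthogonal sum with constant second summand. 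This yields $[\tilde q]_0 = [q]_0 + j_*[p^\perp]_0$ and hence $\theta_A([u]_1) = [\tilde q]_0 - j_*[1_A]_0 = [q]_0 - j_*[p]_0$, which is exactly the suspension part $[q]_0 - j_*\varepsilon_*[q]_0$ of $\varphi_A([q]_0)$. Applying $\theta_A^{-1}$ returns $[v+p^\perp]_1$ and completes the identification $\psi_A([q]_0) = ([p]_0, [v+p^\perp]_1)$, so this generator lies in $\mathrm{K}_*(A)^+$.

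The only genuinely delicate point will be the bookkeeping in the last step: tracking which copies of $\mathrm{K}_0(A)$, $\mathrm{K}_0(SA)$, and $\mathrm{K}_0(C(\mathbb{T},A))$ the various classes inhabit and inserting the splitting maps $j$ and $\varepsilon$ correctly, so that the comparison $[\tilde q]_0 = [q]_0 + j_*[p^\perp]_0$ is an honest identity in $\mathrm{K}_0(C(\mathbb{T},A))$ rather than an informal equation. Choosing $w$ to respect the corner decomposition is precisely what makes the orthogonal-splitting computation exact rather than merely valid up to homotopy, so I would phrase the Whitehead rotation explicitly as a unitary path in $\mathbb{M}_2(pAp)$ to justify $w(t)(p^\perp \oplus 0)w(t)^* = p^\perp \oplus 0$ on the nose.
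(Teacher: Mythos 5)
Your proposal is correct and follows essentially the same route as the paper's proof: both reduce to the generators $\mathcal{D}(A)$, build the loop $q(t) = w(t)(p\oplus 0)w(t)^*$ from a Whitehead rotation in $\mathcal{U}_2(pAp)$ extended by the constant $p^\perp \oplus p^\perp$, and identify the $\mathrm{K}_1$-component by comparing with $\tilde q(t) = q(t) + p^\perp \oplus 0$, which computes $\theta_A([v+p^\perp]_1)$. The only (cosmetic) difference is that you conjugate $p \oplus 0$ by the extended path rather than the corner path, which yields the identical projection loop, and you make explicit the corner-orthogonality bookkeeping that the paper leaves implicit.
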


\begin{proof}
It suffices to show that $\mathcal{D}(A) \subseteq \mathrm{K}_*(A)^+$. To this end, fix a projection $p \in A$ and a unitary $v\in \mathcal{U}(pAp)$, and set $u=v+(1-p)$. The unitary $v \oplus v^*$ in $\mathcal{U}_2(pAp)$ is homotopic to the identity $p\oplus p$.  Let $w : [0, 1] \rightarrow \mathcal{U}_2(pAp)$ be a continuous path such that $w(0) = v \oplus v^*$ and $w(1) = p \oplus p$ and set $\tilde{w}(t) = w(t) + (1 - p) \oplus (1 - p)$. Then $\tilde{w}:[0,1]\rightarrow\mathcal{U}_2(A)$ is a continuous path of unitaries with $\tilde{w}(0)=u\oplus u^*$ and $\tilde{w}(1)=1 \oplus 1$. We define continuous paths of projections $q, \tilde{q}:[0,1]\rightarrow\mathcal{P}_2(A)$ given by $q(t) = w(t) (p\oplus 0) w(t)^*$, and $\tilde{q}(t) = \tilde{w}(t) (1 \oplus 0) \tilde{w}(t)^*$ for $t \in [0, 1]$.  Note that $\tilde{q}(t)=q(t)+p^\perp\oplus 0$. Now $q(0) = q(1) = p \oplus 0$ and hence $q$ furnishes an element $[q]_0 \in \mathrm{K}_0(C(\mathbb{T}, A))^+$.  Also, $[q(1)]_0 = [p]_0$ in $\mathrm{K}_0(A)$,  and $\theta([u]_1)=[\tilde{q}]_0-[\tilde{q}(1)]_0=[q]_0-[q(1)]_0$ in $\mathrm{K}_0(SA)$.  Therefore, $\psi_A([q]_0) = [p]_0 \oplus [u]_1$ and $[p]_0 \oplus [u]_1 \in \mathrm{K}_*(A)^+$ as claimed.
\end{proof}

\begin{proposition}
If $A$ is an ASH-algebra of real rank zero and slow dimension growth, then $\mathrm{K}_*(A)^+ = \mathrm{K}_*^+(A)$.
\end{proposition}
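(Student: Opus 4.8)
One inclusion, namely $\mathrm{K}_*^+(A) \subseteq \mathrm{K}_*(A)^+$, is exactly Proposition~\ref{prop:ConeContainment}, so the plan is to establish the reverse containment $\mathrm{K}_*(A)^+ \subseteq \mathrm{K}_*^+(A)$. The two structural inputs I will use are that $A$ has real rank zero (by hypothesis) and that, as noted immediately after Theorem~\ref{ASHBuildingBlocks}, every real rank zero ASH-algebra with slow dimension growth has cancellation of projections and stable rank one. Fix $y \in \mathrm{K}_*(A)^+$. By definition $y = \psi_A([q]_0)$ for some $[q]_0 \in \mathrm{K}_0(C(\mathbb{T}, A))^+$, and, after a matrix amplification, I may take $q$ to be a genuine projection in $\mathbb{M}_k(C(\mathbb{T}, A)) = C(\mathbb{T}, \mathbb{M}_k(A))$, that is, a continuous loop $t \mapsto q(t)$ of projections in $\mathbb{M}_k(A)$.

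The first step is to run the computation from the proof of Proposition~\ref{prop:ConeContainment} in reverse. By the standard trivialization of a loop of projections there is a continuous path $W : [0,1] \to \mathcal{U}_k(\tilde A)$ with $W(0) = 1$ and $q(e^{2\pi i t}) = W(t)\, p_0\, W(t)^*$, where $p_0 := q(1)$. Since $W(1)$ commutes with $p_0$, it has the form $W(1) = v + v'$ with $v \in \mathcal{U}(p_0 \mathbb{M}_k(A) p_0)$, and tracing through the definitions of $\varphi_A$ and $\theta_A$ exactly as in Proposition~\ref{prop:ConeContainment}, but read backwards, yields
\[ \psi_A([q]_0) = [p_0]_0 \oplus [v + p_0^\perp]_1. \]
Thus $y$ lies in the matrix-level graded dimension range $\mathcal{D}_\infty(A) := \{ ([p]_0, [v + p^\perp]_1) : p \in \mathcal{P}_\infty(A),\ v \in \mathcal{U}(p \mathbb{M}_\infty(A) p) \}$. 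Since $\mathcal{D}_\infty(A)$ is closed under block-diagonal sums and contains $\mathcal{D}(A)$, we have $\mathrm{K}_*^+(A) \subseteq \mathcal{D}_\infty(A)$, so it suffices to prove the reverse inclusion $\mathcal{D}_\infty(A) \subseteq \mathrm{K}_*^+(A)$; combined with the above this gives $\mathrm{K}_*(A)^+ \subseteq \mathcal{D}_\infty(A) = \mathrm{K}_*^+(A)$, as desired.

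To establish $\mathcal{D}_\infty(A) \subseteq \mathrm{K}_*^+(A)$ I would first use real rank zero to decompose the $\mathrm{K}_0$-part. Real rank zero together with cancellation guarantees that $\mathrm{K}_0(A)^+$ is generated as a semigroup by the scale $\Sigma(A) = \{[q]_0 : q \in \mathcal{P}(A)\}$; at the level of representatives, every projection in a matrix algebra over $A$ is Murray--von Neumann equivalent to an orthogonal sum of projections lying in $A$ itself. Given $([p]_0, [v+p^\perp]_1) \in \mathcal{D}_\infty(A)$, I write $[p]_0 = \sum_{i=1}^\ell [q_i]_0$ with $q_i \in \mathcal{P}(A)$ and, using cancellation, replace $p$ by $\bigoplus_i q_i$ and $v$ by its conjugate, which does not change the class $([p]_0, [v+p^\perp]_1)$. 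It then remains to distribute the $\mathrm{K}_1$-part across the blocks: producing $v_i \in \mathcal{U}(q_i A q_i)$ with $\sum_i [v_i + q_i^\perp]_1 = [v + p^\perp]_1$ would give $([p]_0, [v+p^\perp]_1) = \sum_i ([q_i]_0, [v_i + q_i^\perp]_1) \in \mathrm{K}_*^+(A)$.

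This $\mathrm{K}_1$-distribution is the step I expect to be the main obstacle. Stable rank one is the right tool: it gives $\mathrm{K}_1(A) = \mathcal{U}(A)/\mathcal{U}_0(A)$, so $\mathrm{K}_1$-classes are represented by unitaries in $A$ rather than in matrix algebras, and the class $[v+p^\perp]_1$ lies in the image of the map on $\mathrm{K}_1$ induced by the ideal generated by $p$, which coincides with the ideal generated by $\{q_i\}$. When $A$ is simple each corner $q_i A q_i$ is full, so $\mathrm{K}_1(q_i A q_i) \to \mathrm{K}_1(A)$ is an isomorphism and the entire class can be carried on a single block; the non-simple case requires tracking the ideal structure, via a Mayer--Vietoris argument for the ideals generated by the $q_i$, to see that the images of the corner maps $\mathrm{K}_1(q_i A q_i) \to \mathrm{K}_1(A)$ jointly capture $[v+p^\perp]_1$. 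I would isolate this as a lemma on real rank zero, stable rank one algebras and prove it before assembling the steps above.
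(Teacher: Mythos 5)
Your route is genuinely different from the paper's, and it is worth saying what each buys. The paper argues locally: by Theorem~\ref{ASHBuildingBlocks}, $A$ is a limit of finite direct sums of matrix algebras over $\mathbb{C}$, $C(\mathbb{T})$, $C(\mathbb{W}_n)$, $\tilde{\mathbb{I}}_n$; on each block $B$ the equality is read off from the sandwich $\mathrm{K}_*^+(B) \subseteq \mathrm{K}_*(B)^+ \subseteq S_B$, where $S_B = \{x \oplus y \in \mathrm{K}_*(B) : x > 0\} \cup \{(0,0)\}$ and $\mathrm{K}_*^+(B) = S_B$ is quoted from paragraph 4.1 of \cite{ElliottATAlgebras} for the two nontrivial blocks; then one observes that both $\mathrm{K}_*(-)^+$ and $\mathrm{K}_*^+(-)$ commute with direct sums and direct limits. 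You instead work intrinsically in $A$. Your first two steps are sound: the identification of $\mathrm{K}_*(A)^+$ with the matrix-level dimension range $\mathcal{D}_\infty(A)$ via the holonomy $W(1)$ is exactly Proposition~\ref{prop:ConeContainment} run backwards, and the splitting of the $\mathrm{K}_0$-part uses Zhang's Riesz decomposition theorem for real rank zero (this needs a citation, and the replacement of $p$ by $\bigoplus_i q_i$ uses cancellation, which you have). If your argument were completed, it would prove the statement for every separable, unital C$^*$-algebra of real rank zero and stable rank one, which is strictly more general than what the paper establishes.

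However, there is a genuine gap, and it sits precisely where you wrote ``I would isolate this as a lemma \ldots and prove it'': the $\mathrm{K}_1$-distribution step is the entire mathematical content of the proposition, and the Mayer--Vietoris argument you sketch does not work as stated. Writing $I_i = \overline{Aq_iA}$, the Mayer--Vietoris sequence for $I_1 + I_2$ is
\[ \mathrm{K}_1(I_1) \oplus \mathrm{K}_1(I_2) \longrightarrow \mathrm{K}_1(I_1+I_2) \overset{\partial}{\longrightarrow} \mathrm{K}_0(I_1 \cap I_2), \]
so the images of the $\mathrm{K}_1(I_i)$ capture a given class only when its boundary vanishes; nothing in your sketch addresses this obstruction, and for general C$^*$-algebras it is a real one. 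The missing idea is that stable rank one kills it: $\partial$ is the index map of the extension $0 \rightarrow I_1 \cap I_2 \rightarrow I_1 \oplus I_2 \rightarrow I_1 + I_2 \rightarrow 0$, ideals of a stable rank one algebra have stable rank one, and for a stable rank one algebra every $\mathrm{K}_1$-class of a quotient is represented by a unitary lifting to an invertible, so the index map is zero and $\mathrm{K}_1(I_1) \oplus \mathrm{K}_1(I_2) \rightarrow \mathrm{K}_1(I_1 + I_2)$ is surjective. Granting this, your lemma follows by induction on $\ell$ together with Brown's theorem (the full hereditary subalgebra $q_iAq_i \subseteq I_i$ has the same $\mathrm{K}_1$-image in $\mathrm{K}_1(A)$ as $I_i$) and stable rank one of the corners $q_iAq_i$ (so that each class is realized by a unitary $v_i \in \mathcal{U}(q_iAq_i)$, i.e.\ by an element of $\mathcal{D}(A)$). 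So your approach is salvageable and even attractive, but as submitted the proof stops exactly where the real work begins.
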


\begin{proof}
For any algebra $B$, it follows from definitions that
\[ \mathrm{K}_*(B)^+ \subseteq \{x \oplus y \in \mathrm{K}_*(B) : x > 0 \} \cup \{(0, 0)\}=:S_B.\]
Examining the building blocks of $A$ more carefully we have
\[ \begin{array}{ccccccc} \mathrm{K}_0(\mathbb{C}) = \mathbb{Z}, & & \mathrm{K}_0(C(\mathbb{T})) = \mathbb{Z}, & & \mathrm{K}_0(C(\mathbb{W}_n)) = \mathbb{Z} \oplus \mathbb{Z} / n, & & \mathrm{K}_0(\tilde{\mathbb{I}}_n) = \mathbb{Z}, \\ \mathrm{K}_1(\mathbb{C}) = 0, & & \mathrm{K}_1(C(\mathbb{T})) = \mathbb{Z}, & & \mathrm{K}_1(C(\mathbb{W}_n)) = 0, & & \mathrm{K}_1(\tilde{\mathbb{I}}_n) = \mathbb{Z}/n. \end{array} \]
For $B = \mathbb{C}$ and $B= C(\mathbb{W}_n)$, it is clear that $\mathrm{K}_*^+(B) = S_B$.  For $B= C(\mathbb{T})$ or $B = \tilde{\mathbb{I}}_n$, Paragraph 4.1 in \cite{ElliottATAlgebras} shows
$\mathrm{K}_*^+(B) = S_B$. These observations together with Proposition~\ref{prop:ConeContainment} shows that $\mathrm{K}_*(B)^+ = \mathrm{K}_*^+(B)$ for each building block $B$.

Now observe that $\mathrm{K}_*(-)^+$ preserves direct sums and direct limits by the naturality of the maps $\varphi_A$ and $\theta_A$ and the fact that both properties are true for $\mathrm{K}_0(C(\mathbb{T}, -))^+$.  It is also true that the positive cone $\mathrm{K}_*^+(-)$ defined above preserves direct sums and direct limits. The result follows by Theorem \ref{ASHBuildingBlocks}.
\end{proof}

\begin{proof}[Proof of Theorem \ref{thm:ASHimpliesRationallyAT}]
Let $A$ be an ASH-algebra of real rank zero.  We may assume $A$ is unital and $A \otimes \mathcal{Q} \cong A$.  Then $A$ has slow dimension growth as noted after Definition \ref{defn:slowdimensiongrowth} and hence Theorem \ref{ASHBuildingBlocks} implies $A \cong \lim_{n\rightarrow\infty}(A_n, \varphi_n)$ where each $A_n$ is a finite direct sum of matrix algebras over $\mathbb{C}$, $C(\mathbb{T})$, $C(\mathbb{W}_m)$, and $\tilde{\mathbb{I}}_m$.  Let $L_n = \mathrm{K}_*(A_n)$ and $L = \mathrm{K}_*(A)$ as graded, ordered abelian groups. We can write $L_n=F_n\oplus T_n$ where $F_n$ is free abelian and $T_n$ is a finite abelian group.  Endowing $F_n$ with the order and grading inherited from $L_n$, the projection map $\pi_n : L_n \twoheadrightarrow F_n$ is a positive, graded morphism as can be seen by examining the K-theory of each building block.  Also, if $x \in F_n$ is positive, then $x \oplus 0 \in L_n$ is positive and hence $\pi$ restricts to a surjective map on the positive cones $L_n^+ \rightarrow F_n^+$.  Each $\mathrm{K}_*(\varphi_n)$ decomposes as a matrix
\[ \mathrm{K}_*(\varphi_n) = \begin{pmatrix} \alpha_n & 0 \\ \beta_n & \gamma_n \end{pmatrix} : F_n \oplus T_n \rightarrow F_{n+1} \oplus T_{n+1}, \]
where the (1, 2)-entry is zero as there are no non-trivial morphisms from a torsion group to a free group.

If $F = \underset{\longrightarrow}{\lim} \, (F_n, \alpha_n)$, the maps $\pi_n : L_n \rightarrow F_n$ induce a map $\pi : L \rightarrow F$ as shown below:
\[ \begin{tikzcd} L_n \arrow{r}{\mathrm{K}_*(\varphi_n)} \arrow{d}{\pi_n} & L_{n+1} \arrow{r} \arrow{d}{\pi_{n+1}} & \cdots \arrow{r} & L \arrow[dashed]{d}{\pi} \\ F_n \arrow{r}{\alpha_n} & F_{n+1} \arrow{r} & \cdots \arrow{r} & F \end{tikzcd} \]
Clearly $\pi$ is surjective.  Also any element in the kernel of $\pi$ must be a torsion element.  Since $A \otimes \mathcal{Q} \cong A$, we have $L \otimes \mathbb{Q} \cong L$, and, in particular, $L$ is torsion free.  So $\pi$ is an isomorphism.  Also, as each $\pi_n$  is positive and restricts surjective map on the positive cones, $\pi$ is an order isomorphism.  So $L \cong F$ as $\mathbb{Z}/2$-graded ordered groups.

Each $F_n$ is a direct sum of graded groups $\mathbb{Z} \oplus 0$ and $\mathbb{Z} \oplus \mathbb{Z}$ with the strict ordering in the first component.  Since $A$ has real rank zero and stable rank one, the ordered group $L = \mathrm{K}_*(A)$ has the Riesz Decomposition Property by Theorem 3.2 in \cite{ElliottATAlgebras}.  Appealing to Theorem 8.1 in \cite{ElliottATAlgebras}, there are C$^*$-algebras $B_n := C(\mathbb{T}) \otimes D_n$ with the $D_n$ finite dimensional and morphisms $B_n \rightarrow B_{n+1}$ such that $B = \underset{\longrightarrow}{\lim} \, B_n$ has real rank zero and $\mathrm{K}_*(B) \cong L$ as ordered groups.  By replacing $B$ with $B \otimes \mathbb{K}$, we may assume $B$ is stable so that $\mathcal{D}(B) = \mathrm{K}_*^+(B)$.  Since
\[ \mathrm{K}_*(B \otimes \mathcal{Q}) = \mathrm{K}_*(B) \otimes \mathbb{Q} = \mathrm{K}_*(A) \otimes \mathbb{Q} = \mathrm{K}_*(A \otimes \mathcal{Q}) = \mathrm{K}_*(A) = \mathrm{K}_*(B), \]
$B \cong B \otimes \mathcal{Q}$ by Elliott's Classification Theorem for A$\mathbb{T}$-algebras of real rank zero (Theorem \ref{thm:ElliottATAlgebras} above).  Hence
\[ (\underline{\mathrm{K}}(B), \underline{\mathrm{K}}(B)^+) \cong (\mathrm{K}_*(B), \mathrm{K}_*(B)^+). \]
Also, $B$ is an ASH-algebra with slow dimension growth and we have
\[ (\underline{\mathrm{K}}(A), \underline{\mathrm{K}}(A)^+) \cong (\underline{\mathrm{K}}(B), \underline{\mathrm{K}}(B)^+). \]
Therefore, $A \otimes \mathbb{K} \cong B \otimes \mathbb{K} \cong B$ by Dadarlat and Gong's classification theorem (Theorem 9.1 in \cite{DadarlatGong}).  So $A \otimes \mathbb{K}$ is an A$\mathbb{T}$-algebra.  Since corners of A$\mathbb{T}$-algebras are A$\mathbb{T}$, $A$ is an A$\mathbb{T}$-algebra.  This completes the proof of Theorem \ref{thm:ASHimpliesRationallyAT}.
\end{proof}

\section{Proofs of the Main Results}\label{sec:Proofs}

In this section we prove the results stated in the introduction.  For the tracial version of Theorem 1.1, the following proposition of de la Harpe and Skandalis is needed (see \cite{HarpeSkandalis}).

\begin{proposition}\label{prop:DixmierPowersProperty}
Let $A$ be a C*-algebra and consider an action $\alpha : \mathbb{F}_r \rightarrow\Aut(A)$ with $r \geq 2$.  If $\tau$ is a trace on $A \rtimes_{\lambda} \mathbb{F}_r$, there is a trace $\tau_0$ on $A$ such that $\tau = \tau_0 \circ \mathbb{E}$.
\end{proposition}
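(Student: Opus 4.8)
The plan is to take $\tau_0 := \tau|_A$ to be the restriction of $\tau$ to the canonical copy of $A$ inside $A \rtimes_\lambda \mathbb{F}_r$ (via $a \mapsto a u_e$) and to show $\tau = \tau_0 \circ \mathbb{E}$. First I would observe that $\tau_0$ is a trace on $A$: since $A$ embeds as a C*-subalgebra of $A \rtimes_\lambda \mathbb{F}_r$ and $\tau$ is tracial, $\tau_0(ab) = \tau(ab) = \tau(ba) = \tau_0(ba)$. Both $\tau$ and $\tau_0 \circ \mathbb{E}$ are bounded linear functionals on $A \rtimes_\lambda \mathbb{F}_r$, and the linear span of $\{a u_s : a \in A,\ s \in \mathbb{F}_r\}$ is dense, so by continuity it suffices to verify $\tau(a u_s) = (\tau_0 \circ \mathbb{E})(a u_s)$ on these generators. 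When $s = e$ both sides equal $\tau(a)$, while for $s \neq e$ we have $\mathbb{E}(a u_s) = 0$, so the whole statement reduces to the single claim
\[ \tau(a u_s) = 0 \qquad \text{for all } a \in A \text{ and } s \in \mathbb{F}_r \setminus \{e\}. \]

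To prove this vanishing I would combine the trace property with the Powers averaging phenomenon for $\mathbb{F}_r$ with $r \geq 2$. Fix $s \neq e$. For any finite collection $g_1, \dots, g_N \in \mathbb{F}_r$, traciality of $\tau$ together with the covariance relation $u_{g_i} a u_s u_{g_i}^* = \alpha_{g_i}(a)\, u_{g_i s g_i^{-1}}$ gives
\[ \tau(a u_s) = \frac{1}{N} \sum_{i=1}^N \tau\!\left( u_{g_i}\, a u_s\, u_{g_i}^* \right) = \tau\!\left( \frac{1}{N} \sum_{i=1}^N \alpha_{g_i}(a)\, u_{g_i s g_i^{-1}} \right), \]
so, since $\tau$ is a state, $|\tau(a u_s)| \leq \big\| \tfrac{1}{N} \sum_{i=1}^N \alpha_{g_i}(a)\, u_{g_i s g_i^{-1}} \big\|$. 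The crux is then a norm estimate: because $\mathbb{F}_r$ ($r \geq 2$) has Powers' property, for every $\varepsilon > 0$ the elements $g_1, \dots, g_N$ may be chosen so that this norm is smaller than $\varepsilon\|a\|$; letting $\varepsilon \to 0$ forces $\tau(a u_s) = 0$, completing the proof.

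The main obstacle is establishing this averaging estimate with the operator-valued coefficients $\alpha_{g_i}(a)$ in place of scalars. I would realize $A \rtimes_\lambda \mathbb{F}_r$ on $\ell^2(\mathbb{F}_r, \mathcal{H})$ through the regular covariant representation, in which each $\alpha_{g_i}(a)$ acts diagonally with respect to the $\mathbb{F}_r$-grading with norm at most $\|a\|$, while $u_{g_i s g_i^{-1}}$ acts as the translation by $g_i s g_i^{-1}$. The classical Powers partition associated to $s$ and $N$---a decomposition $\mathbb{F}_r = C \sqcup D$ with $sC \cap C = \emptyset$ together with $g_i$ for which the translates $g_i D$ are pairwise disjoint---yields $\big\| \tfrac{1}{N} \sum_i u_{g_i s g_i^{-1}} \big\| = O(1/\sqrt{N})$ by a direct $\|T^*T\|$ computation. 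One then checks that inserting the uniformly bounded diagonal operators $\alpha_{g_i}(a)$ does not disturb this combinatorial cancellation, so that $\big\| \tfrac{1}{N} \sum_i \alpha_{g_i}(a)\, u_{g_i s g_i^{-1}} \big\| = O(\|a\|/\sqrt{N})$. This is precisely the argument of de la Harpe and Skandalis, and the hypothesis $r \geq 2$ is essential: Powers' property fails for the amenable group $\mathbb{Z}$, where the conclusion itself is false.
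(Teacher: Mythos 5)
Your proposal is correct, and it coincides with the paper's treatment in substance: the paper gives no proof of this proposition at all, simply citing de la Harpe and Skandalis, and your argument---restricting $\tau$ to $A$, reducing to the vanishing $\tau(a u_s) = 0$ for $s \neq e$, and then running the Powers averaging estimate $\bigl\| \tfrac{1}{N}\sum_i \alpha_{g_i}(a) u_{g_i s g_i^{-1}} \bigr\| = O(\|a\|/\sqrt{N})$ using a Powers partition and the fact that the coefficients act diagonally in the regular covariant representation---is exactly the argument of that cited reference. The only points to polish in a full write-up are the standard one that an arbitrary $s \neq e$ must first be replaced (via traciality and conjugation) by a conjugate admitting a Powers partition, and, if $A$ is non-unital, that the conjugation step $\tau(x) = \tau(u_g x u_g^*)$ uses the canonical tracial extension of $\tau$ to the multiplier algebra (or a unitization).
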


\begin{proof}[Proof of Theorem \ref{thm:MainResult}]
It is well known that (1) implies (2), and (2) implies (3) is Proposition~\ref{prop:StablyFiniteImpliesCoboundary}.

Assuming $\alpha$ satisfies (3), the induced action $\tilde{\alpha}$ of $\mathbb{F}_r$ on the unitization $\tilde{A}$ of $A$ also satisfies (3).  Moreover, if $\tilde{A} \rtimes_\lambda \mathbb{F}_r$ is MF, then $A \rtimes_\lambda \mathbb{F}_r$ is MF.  Hence to verify (3) implies (1), we may assume $A$ is unital.  Also, we may assume $r < \infty$.  Similar considerations show the claim about traces can be reduced to the case when $A$ is unital and $r < \infty$.

Note that $A \otimes \mathcal{Q}$ is an A$\mathbb{T}$-algebra by Theorem \ref{thm:ASHimpliesRationallyAT}.  Moreover, $\alpha$ induces an action $\alpha \otimes \operatorname{id} : \mathbb{F}_r \curvearrowright A \otimes \mathcal{Q}$ and traces on $A \rtimes_\lambda \mathbb{F}_r$ are in bijection with traces on $(A \otimes \mathcal{Q}) \rtimes_\lambda \mathbb{F}_r$.  If a trace on $(A \otimes \mathcal{Q}) \rtimes_\lambda \mathbb{F}_r$ is MF, then its restriction to $A \rtimes_\lambda \mathbb{F}_r$ is MF.  Hence the claim about traces can be reduced to the case where $A$ is an A$\mathbb{T}$-algebra of real rank zero.

The implication (3) implies (1) can also be reduced to the case when $A$ is A$\mathbb{T}$ of real rank zero.  Indeed, if $(A \otimes \mathcal{Q}) \rtimes_\lambda \mathbb{F}_r$ is MF, then $A \rtimes_\lambda \mathbb{F}_r$ is MF.  So to complete the reduction, we must show if $\alpha$ satisfies (3), so does $\alpha \otimes \operatorname{id}$.  To this end, note that
\[ \mathrm{K}_0(A \otimes \mathcal{Q}) \cong \mathrm{K}_0(A) \otimes \mathbb{Q} \cong \left\{ \frac{x}{n} : x \in \mathrm{K}_0(A), n \in \mathbb{N} \right\} \]
where we identify $x/m$ with $y/n$ if $k(nx - my) = 0$ for some $k \geq 1$ and the addition is defined by the usual arithmetic of fractions.
The morphism $\mathrm{K}_0(A) \rightarrow \mathrm{K}_0(A \otimes \mathcal{Q})$ induced by the embedding $A \hookrightarrow A \otimes \mathcal{Q}$ is given by $x \mapsto x / 1$ and the automorphisms $\mathrm{K}_0(\alpha_s) \otimes \operatorname{id}$ are given by $x / n \mapsto \mathrm{K}_0(\alpha_s)(x) / n$.
Also, under this identification,
\[ \mathrm{K}_0(A \otimes \mathcal{Q})^+ = \left\{ \frac{x}{n} : x \in \mathrm{K}_0(A)^+, n \in \mathbb{N} \right\}, \]
and $x / n \geq 0$ in $\mathrm{K}_0(A \otimes \mathcal{Q})$ if and only if $kx \geq 0$ in $\mathrm{K}_0(A)$ for some $k \in \mathbb{N}$.

Now, suppose $x_1 / n_1, \ldots, x_k / n_k \in \mathrm{K}_0(A \otimes \mathcal{Q})$ and $s_1, \ldots, s_k \in \mathbb{F}_r$ with
\[ x := \sum_{i=1}^k \left( \frac{x_i}{n_i} - \mathrm{K}_0(\alpha_{s_i} \otimes \operatorname{id})\left(\frac{x_i}{n_i}\right) \right) \geq 0. \]
Letting $d \geq 1$ be a common multiple of $n_1, \ldots, n_k$ there is an integer $k \geq 1$ with
\[ x' := \sum_{i=1}^k \left( \frac{kd}{n_i} x_i - \mathrm{K}_0(\alpha_{s_i})\left(\frac{kd}{n_i} x_i\right) \right) \geq 0. \]
Hence $x' = 0$ since $\alpha$ satisfies the coboundary condition, and so $x = x' / dk = 0$.  This shows $\alpha \otimes \operatorname{id}$ satisfies the coboundary condition.

We have reduced the theorem to the case where $A$ is a unital A$\mathbb{T}$-algebra of real rank zero and $r < \infty$.  We work to reduce further to the case of AF-algebras.  By Corollary \ref{cor:K0EmeddingOfATAlgebras} there is an AF-algebra $B$ and an embedding $\varphi: A \rightarrow B$ such that $\mathrm{K}_0(\varphi)$ is an isomorphism.  Let $s_1, \ldots, s_r$ denote the generators of $\mathbb{F}_r$ and denote $\alpha_i := \alpha_{s_i}$.  By Elliott's classification of AF-algebras, the automorphism $\mathrm{K}_0(\varphi) \mathrm{K}_0(\alpha_i) \mathrm{K}_0(\varphi)^{-1}$ of $\mathrm{K}_0(B)$ lifts to an automorphism $\mathrm{K}_0(\beta'_i)$ of $A$ for each $i = 1, \ldots, r$.

Fix a finite set $F \subseteq B$ and $\varepsilon > 0$.  Since $\mathrm{K}_0(\beta_i' \circ \varphi) = \mathrm{K}_0(\varphi \circ \alpha_i)$, the uniqueness portion of Theorem \ref{thm:ElliottATAlgebras} implies for each $i = 1, \ldots, r$, there is a unitary $u_i \in B$ such that
\[ \| u_i \beta_i'(\varphi(a)) u_i^* - \varphi(\alpha_i(a)) \| < \varepsilon \qquad \text{for all $a \in F$}. \]
Define $\beta_i = \operatorname{ad}(u_i) \circ \beta'_i$ for $i = 1, \ldots, r$.  The $\beta_i$ induce an action $\beta : \mathbb{F}_r \curvearrowright B$ such that $\beta_{s_i} = \beta_i$ for each $i = 1, \ldots, r$.

If $\alpha$ satisfies the coboundary condition, then $\beta$ satisfies the coboundary condition since $\mathrm{K}_0(\varphi) : \mathrm{K}_0(A) \rightarrow \mathrm{K}_0(B)$ is an isomorphism intertwining the actions $\mathrm{K}_0(\alpha)$ and $\mathrm{K}_0(\beta)$.  Theorem \ref{thm:K0EquivariantQDQ} implies the existence faithful morphism $\mu : \mathrm{K}_0(B) \rightarrow \mathrm{K}_0(\mathcal{Q}_\omega)$ such that $\mu \circ \mathrm{K}_0(\beta_s) = \mu$ for every $s \in \mathbb{F}_r$.  Since $B$ is AF, there is a $\ast$-homomorphism $\psi : A \rightarrow \mathcal{Q}_\omega$ such that $\mathrm{K}_0(\psi) = \mu$ by Theorem \ref{thm:LiftingK0Approximations}.  As $\mathrm{K}_0(\psi) = \mu$ is faithful, $\ker(\psi)$ does not contain any non-zero projections and since $\ker(\psi)$ is AF, $\ker(\psi) = 0$. Hence $\psi$ is injective.  Also, since
\[ \mathrm{K}_0(\psi \circ \beta_s) = \mu \circ \mathrm{K}_0(\beta_s) = \mu = \mathrm{K}_0(\psi), \]
the morphisms $\psi \circ \beta_s$ and $\psi$ are unitarily equivalent by Theorem \ref{thm:LiftingK0Approximations}.  Since $\mathbb{F}_r$ is free, there is a unitary representation $w : \mathbb{F}_r \rightarrow \mathcal{U}(\mathcal{Q}_\omega)$ such that $\operatorname{ad}(w_s) \circ \psi = \psi \circ \beta_s$ for every $s \in \mathbb{F}_r$.  Now, $\psi \circ \varphi : A \rightarrow \mathcal{Q}_\omega$ is injective and
\[ \| \psi(\varphi(\alpha_{s_i}(a))) - w_{s_i} \psi(\varphi(a)) w_{s_i}^* \| < \varepsilon \]
for $a \in F$ and $i = 1, \ldots, r$.

We have shown this existence of a faithful approximate covariant representation of the dynamical system $(A, \mathbb{F}_r, \alpha)$ on $\mathcal{Q}_\omega$.  A reindexing argument shows there is a faithful covariant representation of $(A, \mathbb{F}_r, \alpha)$ on $\mathcal{Q}_\omega$ and hence $\alpha$ is MF.  Now, Corollary \ref{cor:FreeGroupCrossedProducts} implies $A \rtimes_\lambda \mathbb{F}_r$ is MF, proving (1).

Finally, we must show every trace on $A \rtimes_\lambda \mathbb{F}_r$ is MF for an arbitrary action $\alpha : \mathbb{F}_r \curvearrowright A$.  We first show every invariant trace $\tau_A$ on $A$ is $\alpha$-MF.  The proof is very similar to the proof of (3) implies (1) above, so we omit some details.  As above, there is an AF-algebra $B$, an action $\beta : \mathbb{F}_r \curvearrowright B$, and an embedding $\varphi : A \rightarrow B$ approximately intertwining $\alpha$ and $\beta$ such that $\mathrm{K}_0(\varphi)$ is an isomorphism.  The state $\mathrm{K}_0(\tau_A) \circ \mathrm{K}_0(\varphi)^{-1}$ on $\mathrm{K}_0(B)$ lifts to a trace $\tau_B$ on $B$.  Since $\mathrm{K}_0(\tau_A) = \mathrm{K}_0(\tau_B \circ \varphi)$ on $\mathrm{K}_0(A)$ and $A$ has real rank zero, $\tau_A = \tau_B \circ \varphi$.

Note that $\mathrm{K}_0(\tau_B)$ is a $\mathrm{K}_0(\beta)$-invariant state on $\mathrm{K}_0(B)$.  By Theorem \ref{thm:K0QDTQ}, there is a state-preserving morphism $\mathrm{K}_0(B) \rightarrow \mathrm{K}_0(\mathcal{Q}_\omega)$ which is constant on $\mathrm{K}_0(\beta)$-orbits.  As above, Theorem \ref{thm:LiftingK0Approximations} allows one to lift this to a (necessarily trace-preserving) covariant representation of $(B, \mathbb{F}_r, \beta)$ on $\mathcal{Q}_\omega$.  Then, after composing with $\varphi : A \rightarrow B$ and applying an ultrafilter reindexing argument, one obtains a trace-preserving covariant representation of $(A, \mathbb{F}_r, \alpha)$ on $\mathcal{Q}_\omega$.  Thus $\tau_A$ is $\alpha$-MF.

Having shown every $\alpha$-invariant trace on $A$ is $\alpha$-MF, it follows that every trace on $A \rtimes_\lambda \mathbb{F}_r$ of the form $\tau \circ \mathbb{E}$ for an invariant trace $\tau$ on $A$ is MF by Corollary \ref{cor:FreeGroupCrossedProducts}.  In particular, when $r \geq 2$, every trace on $A \rtimes_\lambda \mathbb{F}_r$ is MF by Proposition \ref{prop:DixmierPowersProperty}.

Now consider the case when $r = 1$.  Let $\tau$ be a trace on $A \rtimes_\alpha \mathbb{Z}$ and let
\[ J = \{ a \in A : \tau(a^*a) = 0 \}. \]
Then $J$ is an $\alpha$-invariant ideal of $A$ and $\tau$ vanishes on the ideal $J \rtimes_\alpha \mathbb{Z}$ of $A \rtimes_\alpha \mathbb{Z}$.  Let $\bar{\alpha}$ denote the action on $A / J$ induced by $\alpha$ and let $\bar{\tau}$ denote the induced trace on $(A / J) \rtimes_{\bar{\alpha}} \mathbb{Z}$.  Then $\bar{\tau}$ is a $\bar{\alpha}$-invariant trace on $A / J$.

The quotient $A / J$ is an A$\mathbb{T}$-algebra and hence $(A / J) \rtimes_{\bar{\alpha}} \mathbb{Z}$ is a separable, nuclear C$^*$-algebra satisfying the UCT. Also, $(A / J) \rtimes_{\bar{\alpha}} \mathbb{Z}$ admits a faithful trace; namely, the trace $\bar{\tau}|_{A / J} \circ \mathbb{E}$ is faithful. By Corollary 6.1 in \cite{TikuisisWhiteWinter}, every trace on $(A / J) \rtimes_{\bar{\alpha}} \mathbb{Z}$ is quasidiagonal.  In particular, $\bar{\tau}$ is quasidiagonal and hence $\tau$ is quasidiagonal.  This completes the proof.
\end{proof}

\begin{proof}[Proof of Theorem \ref{thm:ByTheMegaTheorem...}]
We have (1) implies (2) since every MF algebra is stably finite.  Since $A \rtimes_\lambda \mathbb{F}_r$ is exact, if $A \rtimes_\lambda \mathbb{F}_r$ is stably finite, then $A \rtimes_\lambda \mathbb{F}_r$ admits a trace by Corollary 5.12 in \cite{HaagerupQuasitrace}.

Note that every invariant trace on $A$ induces a faithful invariant trace on $A \rtimes_\lambda \mathbb{F}_r$.  Hence the proof will be complete once we show every trace on $A \rtimes_\lambda \mathbb{F}_r$ is MF.  By assumption, $A \otimes \mathcal{Q}$ is a separable, simple, unital C$^*$-algebra which satisfies the UCT and has finite nuclear dimension.  Assuming $A$ has a trace, Theorem 6.2(iii) in \cite{TikuisisWhiteWinter} implies $A \otimes \mathcal{Q}$ is an ASH-algebra.  Since $\mathrm{K}_0(A)$ separates traces on $A$, $A \otimes \mathcal{Q}$ has real rank zero by Theorem 1.4(f) in \cite{BlackadarKumjianRordam} and the paragraph that follows it.  Hence Theorem \ref{thm:MainResult} applies.
\end{proof}

\begin{proof}[Proof of Corollary \ref{cor:Monotracial}]
By the main result of \cite{TikuisisWhiteWinter}, $A$ is quasidiagonal and hence $A \otimes \mathcal{Q}$ has finite nuclear dimension by the main result of \cite{MatuiSato}.  The unique trace on $A$ is certainly invariant under $\alpha$ and the first part of the corollary follows from Theorem \ref{thm:ByTheMegaTheorem...}.

A well known theorem of Rosenberg (see the appendix of \cite{Rosenberg}) states that if C$^*_\lambda(G)$ is quasidiagonal, then $G$ is amenable.  Hence
if $r \geq 2$, $A \rtimes_\lambda \mathbb{F}_r$ is MF but not quasidiagonal and the result follows from Corollary 13.5 in \cite{BrownQDPaper}.
\end{proof}

\begin{proof}[Proof of Theorem \ref{thm:AmenableByFreeGroups}]
Following \cite{OzawaRordamSato}, we define the Bernoulli shift crossed product
\[ B(G) := \left( \bigotimes_G \mathbb{M}_{2^\infty} \right) \rtimes_\lambda G, \]
where the action is given by left translation on the index set.  By Proposition 2.1 in \cite{OzawaRordamSato}, $B(G)$ is a separable, simple, unital C$^*$-algebra with a unique trace and satisfies the UCT.  The action $\mathbb{F}_r \curvearrowright G$ induces an action $\mathbb{F}_r \curvearrowright B(G)$ such that the embedding C$^*_\lambda(G) \hookrightarrow B(G)$ is equivariant.  Hence we have and embedding
\[ C^*_\lambda(G \rtimes \mathbb{F}_r) \cong C^*_\lambda(G) \rtimes_\lambda \mathbb{F}_r \hookrightarrow B(G) \rtimes_\lambda \mathbb{F}_r. \]
By Corollary \ref{cor:Monotracial}, $B(G) \rtimes_\lambda \mathbb{F}_r$ is MF and hence C$^*_\lambda(G \rtimes \mathbb{F}_r)$ is MF.  Since the embedding above is trace-preserving, we also have that the canonical trace on C$^*_\lambda(G \rtimes \mathbb{F}_r)$ is MF.
\end{proof}

\section{A Result for Cuntz-Pimnser Algebras}

In \cite{Schafhauser}, the second author has shown how to reduce certain questions about Cuntz-Pimsner algebras to questions about crossed products by $\mathbb{Z}$.  Using these techniques, it is possible to prove a version of Theorem \ref{thm:MainResult} for Cuntz-Pimsner algebras with coefficients in a real rank zero ASH-algebra (see Theorem \ref{thm:CuntzPimsner} below).  The reader is referred to Katsura's paper \cite{Katsura:Corr1} for the relevant background on C$^*$-correspondences and Cuntz-Pimsner algebras.

Given a C$^*$-correspondence $H$ over a C$^*$-algebra $A$, taking the external tensor product with $\mathcal{Q}$ yields a correspondence $H \otimes \mathcal{Q}$ over $A \otimes \mathcal{Q}$.  Let $(\pi^0, \pi^1)$ and $(\tilde{\pi}^0, \tilde{\pi}^1)$ denote the universal covariant representations of $(A, H)$ and $(A \otimes \mathcal{Q}, H \otimes \mathcal{Q})$ on the Cuntz-Pimsner algebras $\mathcal{O}_A(H)$ and $\mathcal{O}_{A \otimes \mathcal{Q}}(H \otimes \mathcal{Q})$, respectively.

\begin{lemma}\label{lem:RationalCorrespondence}
For any C*-correspondence $H$ over a C*-algebra $A$, there is a natural isomorphism
\[ \varphi : \mathcal{O}_A(H) \otimes \mathcal{Q} \rightarrow \mathcal{O}_{A \otimes \mathcal{Q}}(H \otimes \mathcal{Q}) \]
such that $\varphi(\pi^0(a) \otimes b) = \tilde{\pi}^0(a \otimes b)$ and $\varphi(\pi^1(\xi) \otimes b) = \tilde{\pi}^1(\xi \otimes b)$ for each $a \in A$, $\xi \in H$, and $b \in \mathcal{Q}$.
\end{lemma}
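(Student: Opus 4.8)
The plan is to realize $\mathcal{O}_{A \otimes \mathcal{Q}}(H \otimes \mathcal{Q})$ via an explicit covariant representation inside $\mathcal{O}_A(H) \otimes \mathcal{Q}$ and then apply Katsura's gauge-invariant uniqueness theorem. Concretely, I would first construct the candidate inverse $\psi := \varphi^{-1}$ from the pair $(\rho^0, \rho^1)$, where $\rho^0 := \pi^0 \otimes \id_{\mathcal{Q}} : A \otimes \mathcal{Q} \to \mathcal{O}_A(H) \otimes \mathcal{Q}$ is a $\ast$-homomorphism on the minimal tensor product and $\rho^1 := \pi^1 \otimes \id_{\mathcal{Q}} : H \otimes \mathcal{Q} \to \mathcal{O}_A(H) \otimes \mathcal{Q}$ is the induced (isometric) linear map. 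The representation relations for $(\rho^0, \rho^1)$ reduce on elementary tensors to those of $(\pi^0, \pi^1)$ together with the arithmetic of the external tensor product; for instance $\rho^1(\xi \otimes b)^* \rho^1(\eta \otimes c) = \pi^1(\xi)^* \pi^1(\eta) \otimes b^* c = \rho^0(\langle \xi, \eta \rangle \otimes b^* c)$, as required, and similarly $\rho^0(x)\rho^1(\zeta) = \rho^1(x\zeta)$.

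The substantive point is the Katsura covariance condition for $(\rho^0, \rho^1)$, and this is where I expect the main work to lie. It requires identifying the structure of the tensored correspondence: $\mathbb{K}(H \otimes \mathcal{Q}) \cong \mathbb{K}(H) \otimes \mathcal{Q}$ (the standard external tensor product formula, using $\mathbb{K}(\mathcal{Q}_{\mathcal{Q}}) = \mathcal{Q}$), the left action $\Phi = \phi \otimes \id_{\mathcal{Q}}$, and, most importantly, the identification of Katsura's ideal $J_{H \otimes \mathcal{Q}} = J_H \otimes \mathcal{Q}$. The latter is where exactness (indeed nuclearity) of $\mathcal{Q}$ enters: writing $J_H = \phi^{-1}(\mathbb{K}(H)) \cap (\ker \phi)^\perp$, exactness of $\mathcal{Q}$ yields $(\phi \otimes \id)^{-1}(\mathbb{K}(H) \otimes \mathcal{Q}) = \phi^{-1}(\mathbb{K}(H)) \otimes \mathcal{Q}$ and $\ker(\phi \otimes \id) = \ker(\phi) \otimes \mathcal{Q}$, while unitality of $\mathcal{Q}$ gives $(\ker(\phi) \otimes \mathcal{Q})^\perp = (\ker \phi)^\perp \otimes \mathcal{Q}$; intersecting gives the claim. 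Granting this, for $a \in J_H$ and $b \in \mathcal{Q}$ the covariance of the universal representation $(\pi^0, \pi^1)$ gives $\rho^0(a \otimes b) = \pi^0(a) \otimes b = \psi_\pi(\phi(a)) \otimes b$, which is precisely the value of the induced map $\psi_\rho = \psi_\pi \otimes \id$ on $\Phi(a \otimes b) = \phi(a) \otimes b$; by linearity and continuity this verifies covariance on all of $J_{H \otimes \mathcal{Q}} = J_H \otimes \mathcal{Q}$.

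With covariance established, the universal property of $\mathcal{O}_{A \otimes \mathcal{Q}}(H \otimes \mathcal{Q})$ produces a $\ast$-homomorphism $\psi$ with $\psi(\tilde{\pi}^0(a \otimes b)) = \pi^0(a) \otimes b$ and $\psi(\tilde{\pi}^1(\xi \otimes b)) = \pi^1(\xi) \otimes b$, and I would then verify $\psi$ is an isomorphism via the gauge-invariant uniqueness theorem applied to $(\rho^0, \rho^1)$. Surjectivity uses that $\mathcal{Q}$ is unital: the generators $\pi^0(a) \otimes 1$ and $\pi^1(\xi) \otimes 1$ lie in the range, so $\mathcal{O}_A(H) \otimes 1$ does as well, and together with $\pi^0(A) \otimes \mathcal{Q}$ this generates $\mathcal{O}_A(H) \otimes \mathcal{Q}$. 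For injectivity, $\rho^0 = \pi^0 \otimes \id$ is faithful since $\pi^0$ is faithful (Katsura) and the minimal tensor product of faithful $\ast$-homomorphisms is faithful, while $\gamma \otimes \id_{\mathcal{Q}}$, with $\gamma$ the gauge action on $\mathcal{O}_A(H)$, is a strongly continuous circle action fixing $\rho^0$ and scaling $\rho^1$ by $z$; Katsura's theorem then forces $\psi$ to be injective. Hence $\varphi := \psi^{-1}$ is the desired isomorphism, and naturality is immediate from the naturality of each construction on generators.
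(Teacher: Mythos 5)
Your proof is correct, but it runs in the opposite direction from the paper's and rests on a different key input. You build the inverse $\psi$ by verifying that $(\rho^0,\rho^1)$ is a Katsura-covariant representation of $(A\otimes\mathcal{Q}, H\otimes\mathcal{Q})$ and invoking the universal property; the entire weight of your argument therefore falls on the ideal identification $J_{H\otimes\mathcal{Q}} = J_H\otimes\mathcal{Q}$, and injectivity of $\psi$ then comes from the gauge-invariant uniqueness theorem applied to $(\rho^0,\rho^1)$ itself. The paper never touches $J_{H\otimes\mathcal{Q}}$: it notes only that $(\rho^0,\rho^1)$ is an \emph{injective} Toeplitz representation admitting a gauge action whose image generates $\mathcal{O}_A(H)\otimes\mathcal{Q}$, and then cites Proposition 7.14 of \cite{Katsura:Corr2}, which for precisely such representations produces a surjection from the generated C$^*$-algebra onto the Cuntz--Pimsner algebra (such a representation automatically generates a relative Cuntz--Pimsner algebra $\mathcal{O}(J', H\otimes\mathcal{Q})$ with $J'\subseteq J_{H\otimes\mathcal{Q}}$). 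This yields $\varphi$ directly with no covariance check, and injectivity of $\varphi$ is then obtained by a different device as well: Kirchberg's Slice Lemma plus simplicity of $\mathcal{Q}$ reduce the problem to elementary tensors, and the gauge-invariant uniqueness theorem is applied only to the restriction to $\mathcal{O}_A(H)\otimes 1$, which is automatically covariant because it factors through the universal covariant representation. Trade-offs: the paper's route is much shorter and needs no tensor-product ideal computations, but it leans on the relative Cuntz--Pimsner machinery and on simplicity of $\mathcal{Q}$; your route is self-contained modulo standard slice-map facts, avoids the Slice Lemma and simplicity altogether, and the identity $J_{H\otimes\mathcal{Q}} = J_H\otimes\mathcal{Q}$ is a structural fact of independent interest. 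One small correction to your sketch: the annihilator identity $(\ker\phi\otimes\mathcal{Q})^\perp = (\ker\phi)^\perp\otimes\mathcal{Q}$ does not follow from unitality of $\mathcal{Q}$ alone; like your other two identities it requires the slice-map property (exactness) of $\mathcal{Q}$, with unitality entering only to see that functionals of the form $b\psi$, $b\in\mathcal{Q}$, $\psi\in\mathcal{Q}^*$, exhaust $\mathcal{Q}^*$. Since $\mathcal{Q}$ is nuclear, this is harmless.
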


\begin{proof}
The maps
\[ \rho^0 : A \otimes \mathcal{Q} \rightarrow \mathcal{O}_A(H) \otimes \mathcal{Q} \qquad \text{and} \qquad \rho^1 : H \otimes \mathcal{Q} \rightarrow \mathcal{O}_A(H) \otimes Q \]
given by $\rho^0(a \otimes b) = \pi^0(a) \otimes b$ and $\rho^1(\xi \otimes b) = \pi^1(\xi) \otimes b$ form a faithful Toeplitz representation which admits a gauge action.  Moreover, $\mathcal{O}_A(H) \otimes Q$ is generated as a C$^*$-algebra by $\rho^0(A \otimes \mathcal{Q})$ and the $\rho^1(H \otimes \mathcal{Q})$.  Hence there is an induced surjective $*$-homomorphism
\[ \varphi : \mathcal{O}_A(H) \otimes \mathcal{Q} \rightarrow \mathcal{O}_{A \otimes \mathcal{Q}}(H \otimes \mathcal{Q}) \]
such that $\varphi \circ \tilde{\pi}^0 = \rho^0$ and $\varphi \circ \tilde{\pi}^1 = \rho_1$ by Proposition 7.14 in \cite{Katsura:Corr2}.

It's enough to show $\varphi$ is injective.  Assume $a \in \mathcal{O}_A(H)$ and $b \in \mathcal{Q}$ such that $b \neq 0$ and $\varphi(a \otimes b) = 0$.  By Kirchberg's Slice Lemma \cite[Lemma 4.1.9]{RordamYellowBook}, it's enough to show $a = 0$.  Since $\mathcal{Q}$ is simple, there are $x_1, \ldots, x_n, y_1, \ldots, y_n \in \mathcal{Q}$ with $\sum_i x_i b y_i = 1$.  Now,
\[ \varphi(a \otimes 1) = \sum_i \varphi(1 \otimes x_i) \varphi(a \otimes b) \varphi(1 \otimes y_i) = 0. \]
By the Gauge Invariant Uniqueness Theorem, the restriction of $\varphi$ to $\mathcal{O}_A(H)$ is injective, and hence $a = 0$.
\end{proof}

\begin{definition}
A C$^*$-correspondence $H$ over a C$^*$-algebra $A$ is called \emph{regular} if the left action $A \rightarrow \mathbb{B}(H)$ is faithful and has range contained in $\mathbb{K}(H)$.
\end{definition}

Given a regular C$^*$-correspondence $(A, H)$, taking the Kasparov product with $H$ yields endomorphism $[H]$ of $\mathrm{K}_0(A)$; viewing $\mathrm{K}_0(A)$ as the Grothendieck group of finitely generated Hilbert $A$-modules, the endomorphism $[H]$ is determined by the assignment $L \mapsto L \otimes_A H$ for every finitely generated Hilbert $A$-module $L$.  If $(A, H)$ is a regular C$^*$-correspondence, then the C$^*$-correspondence $(A \otimes \mathcal{Q}, H \otimes \mathcal{Q})$ is also regular.  Making the identification
\[ \mathrm{K}_0(A \otimes \mathcal{Q}) \cong \left\{ \frac{x}{n} : x \in \mathrm{K}_0(A), n \in \mathbb{N} \right\}, \]
the endomorphism $[H \otimes \mathcal{Q}]$ is given by
\[ [H \otimes \mathcal{Q}]\left(\frac{x}{n}\right) = \frac{[H](x)}{n}. \]

\begin{theorem}\label{thm:CuntzPimsner}
Let $A$ be an ASH-algebra such that $A \otimes \mathcal{Q}$ has real rank zero.  If $H$ is a separable, regular C*-correspondence over $A$, then the following are equivalent:
\begin{enumerate}
  \item $\mathcal{O}_A(H)$ is quasidiagonal;
  \item $\mathcal{O}_A(H)$ is stably finite;
  \item if $x \in \mathrm{K}_0(A)$ with $[H](x) \leq x$, then $[H](x) = x$.
\end{enumerate}
\end{theorem}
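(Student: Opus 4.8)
The plan is to reduce the Cuntz--Pimsner algebra to a crossed product by $\mathbb{Z} = \mathbb{F}_1$ and then invoke Theorem \ref{thm:MainResult}. Recall from \cite{Schafhauser} that a regular, separable correspondence $(A,H)$ over a unital $A$ dilates the endomorphism $[H]$ to an automorphism: there is a C*-algebra $B$, a stationary inductive limit of the system built from $H$, together with an automorphism $\gamma$ (the shift) such that $\mathcal{O}_A(H)$ is a full corner of $B \rtimes_\gamma \mathbb{Z}$; in particular the two algebras are stably isomorphic. At the level of ordered K-theory this produces an identification $\mathrm{K}_0(B) \cong \varinjlim(\mathrm{K}_0(A), [H])$ under which $\mathrm{K}_0(\gamma)$ is the shift, so that $\mathrm{K}_0(\gamma)^{-1}$ restricts to $[H]$ on the image of each stage. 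I will prove the three implications $(1) \Rightarrow (2) \Rightarrow (3) \Rightarrow (1)$.

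The implication $(1) \Rightarrow (2)$ is immediate, since every quasidiagonal C*-algebra is MF and hence stably finite. For $(2) \Rightarrow (3)$ I would bypass the dilation and argue directly with Pimsner's exact sequence. Suppose $(3)$ fails, so there is $x$ with $w := x - [H](x) \in \mathrm{K}_0(A)^+ \setminus \{0\}$. Writing $w = [p]_0$ for a necessarily non-zero projection $p$ over $A$, and using that the canonical inclusion $\iota : A \hookrightarrow \mathcal{O}_A(H)$ is injective while $\mathrm{K}_0(\iota) \circ (\mathrm{id} - [H]) = 0$ by exactness of Pimsner's sequence, one gets $[\iota(p)]_0 = \mathrm{K}_0(\iota)(w) = 0$ with $\iota(p) \neq 0$. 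In a stably finite C*-algebra a non-zero projection cannot have vanishing $\mathrm{K}_0$-class, so this contradicts $(2)$; hence $(2) \Rightarrow (3)$.

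The substantial implication is $(3) \Rightarrow (1)$, and here I would first rationalize. By Lemma \ref{lem:RationalCorrespondence} we have $\mathcal{O}_A(H) \otimes \mathcal{Q} \cong \mathcal{O}_{\tilde A}(\tilde H)$ with $\tilde A := A \otimes \mathcal{Q}$ and $\tilde H := H \otimes \mathcal{Q}$, and by Theorem \ref{thm:ASHimpliesRationallyAT} the algebra $\tilde A$ is an A$\mathbb{T}$-algebra of real rank zero. Since $\tilde A$ is unital and $\tilde H$ is regular, $\tilde H$ is finitely generated projective, so the dilation $\tilde B$ is an inductive limit of corners of matrix algebras over $\tilde A$ and is therefore an ASH-algebra of real rank zero. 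Using the formula $[\tilde H](x/n) = [H](x)/n$ together with the description of the positive cone of $\mathrm{K}_0(\tilde A) \cong \mathrm{K}_0(A) \otimes \mathbb{Q}$, I would check that $(3)$ for $(A,H)$ passes to $(3)$ for $(\tilde A, \tilde H)$: if $[\tilde H](x/n) \leq x/n$ then $k(x - [H]x) \geq 0$ in $\mathrm{K}_0(A)$ for some $k$, so $(3)$ applied to $kx$ forces $x - [H]x$ to be torsion and $[\tilde H](x/n) = x/n$.

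It then remains to translate $(3)$ for $(\tilde A, \tilde H)$ into the coboundary condition for $\tilde\gamma$ on $\mathrm{K}_0(\tilde B)$. The coboundary subgroup of $\mathrm{K}_0(\tilde\gamma)$ is the inductive-limit image of $\mathrm{im}(\mathrm{id} - [\tilde H])$ inside $\mathrm{K}_0(\tilde B)$, and any positive element of this image is represented, at a sufficiently deep stage of the limit, by an element of $\mathrm{im}(\mathrm{id} - [\tilde H]) \cap \mathrm{K}_0(\tilde A)^+$; condition $(3)$ for $\tilde H$ forces that representative to vanish, so $\tilde\gamma$ satisfies the coboundary condition. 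Theorem \ref{thm:MainResult} (with $r = 1$) now shows $\tilde B \rtimes_{\tilde\gamma} \mathbb{Z}$ is MF, hence quasidiagonal as it is nuclear. Quasidiagonality is invariant under stable isomorphism and passes to subalgebras, so $\mathcal{O}_{\tilde A}(\tilde H)$ is quasidiagonal and therefore so is its subalgebra $\mathcal{O}_A(H) \hookrightarrow \mathcal{O}_A(H) \otimes \mathcal{Q}$. The main obstacle is this final bundle of verifications surrounding the dilation: one must confirm that $\tilde B$ genuinely lands in the class of ASH-algebras of real rank zero covered by Theorem \ref{thm:MainResult} --- this is exactly why rationalizing is indispensable, since $A$ itself need not have real rank zero --- and that the shift automorphism's coboundary subgroup is correctly matched with $\mathrm{im}(\mathrm{id} - [\tilde H])$ in the ordered K-theory of the limit.
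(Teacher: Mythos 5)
Your proposal is correct for unital $A$ and follows essentially the same route as the paper: rationalize via Lemma~\ref{lem:RationalCorrespondence} and Theorem~\ref{thm:ASHimpliesRationallyAT}, realize the Cuntz--Pimsner algebra up to corners/stable isomorphism as a crossed product by $\mathbb{Z}$ of an inductive limit algebra built from $(A,H)$, match condition (3) with the coboundary condition for the induced action on $\mathrm{K}_0$, and invoke Theorem~\ref{thm:MainResult} with $r=1$ (plus nuclearity to upgrade MF to QD). The paper packages the $\mathbb{Z}$-crossed product differently but equivalently: it sets $B:=\mathcal{O}_A(H)\rtimes_\gamma\mathbb{T}$ for the gauge action, quotes Propositions 5.1--5.3 of \cite{Schafhauser} to see that $B$ is a limit of algebras Morita equivalent to $A$ (hence an A$\mathbb{T}$-algebra of real rank zero after rationalizing), and recovers $\mathcal{O}_A(H)$ from $B\rtimes_{\hat{\gamma}}\mathbb{Z}$ by Takai duality; your Pimsner-style dilation with the shift automorphism is the same object in different clothing, and your direct derivation of (2)$\Rightarrow$(3) from Pimsner's exact sequence together with the fact that a non-zero projection in a stably finite algebra has non-zero $\mathrm{K}_0$-class is a clean, self-contained substitute for the paper's appeal to the proof of Theorem~C in \cite{Schafhauser}. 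The one genuine shortfall is unitality: the theorem does not assume $A$ unital, but your argument does, both where you assert that regularity makes $\tilde{H}$ finitely generated projective and where you describe $\tilde{B}$ as a limit of corners of matrix algebras over $\tilde{A}$; neither statement makes sense for non-unital $A$. The repair is exactly what the paper's choice of $B$ supplies: the stages of the limit are $\mathbb{K}(\mathcal{F}_n)$ for the truncated Fock modules $\mathcal{F}_n=\bigoplus_{k=0}^{n}H^{\otimes k}$, which are full Hilbert $A$-modules (they contain $H^{\otimes 0}=A$ as a direct summand) and hence, in the separable setting, stably isomorphic to $A$; since A$\mathbb{T}$-algebras of real rank zero are closed under stable isomorphism and inductive limits, the remainder of your argument, including the identification of the coboundary subgroup of the shift with the image of $\operatorname{im}(\mathrm{id}-[\tilde{H}])$ inside $\mathrm{K}_0(\tilde{B})\cong\varinjlim\bigl(\mathrm{K}_0(\tilde{A}),[\tilde{H}]\bigr)$, goes through unchanged.
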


\begin{proof}
Using the results above and arguing as in the proof of Theorem \ref{thm:MainResult} in Section \ref{sec:Proofs}, we may assume $A \otimes \mathcal{Q} \cong A$. Then $A$ is an A$\mathbb{T}$-algebra of real rank zero by Theorem \ref{thm:ASHimpliesRationallyAT}.

Let $\gamma$ denotes the gauge action of $\mathbb{T}$ on $\mathcal{O}_A(H)$ and define $B := \mathcal{O}_A(H) \rtimes \mathbb{T}$.  Combing Propositions 5.1, 5.2, and 5.3 in \cite{Schafhauser} shows $B$ is a direct limit of C$^*$-algebras Morita equivalent to $A$.  In particular, $B$ is an A$\mathbb{T}$-algebra of real rank zero as follows from the permanence properties given in Proposition 3.2.5 of \cite{RordamYellowBook}.  If $\hat{\gamma}$ denotes the dual action of $\mathbb{Z}$ on $B$, then $\mathcal{O}_A(H)$ is Morita equivalent to $B \rtimes_\gamma \mathbb{Z}$ by Takai duality.  The result can be deduced from Theorem \ref{thm:MainResult} as in the proof of Theorem C in \cite{Schafhauser}.  Note in particular that since $A$ is nuclear, $\mathcal{O}_A(H)$ is nuclear by Corollary 7.4 of \cite{Katsura:Corr1}; hence $\mathcal{O}_A(H)$ is quasidiagonal if and only if $\mathcal{O}_A(H)$ is MF by the Choi-Effros Lifting Theorem.
\end{proof}

\end{document}